\numberwithin{equation}{section}
\newtheorem{theorem}{Theorem}[section]
\newtheorem{lemma}[theorem]{Lemma}
\newtheorem{proposition}[theorem]{Proposition}
\newtheorem{corollary}[theorem]{Corollary}
\newtheorem{definition}[theorem]{Definition}
\newtheorem{remark}[theorem]{Remark}
\theoremstyle{remark}
\newcommand{\RED}{\color{black}}
\newcommand{\BLACK}{\color{black}}
\newcommand{\PINK}{\color{black}}
\newcommand{\calA}{\mathcal{A}}
\newcommand{\calB}{\mathcal{B}}
\newcommand{\C}{\mathbb{C}}
\newcommand{\calC}{\mathcal{C}}
\newcommand{\calD}{\mathcal{D}}
\newcommand{\calH}{\mathcal{H}}
\newcommand{\calI}{\mathcal{I}}
\newcommand{\calK}{\mathcal{K}}
\newcommand{\calL}{\mathcal{L}}
\newcommand{\M}{\mathbb{M}}
\newcommand{\calM}{\mathcal{M}}
\newcommand{\N}{\mathbb{N}}
\newcommand{\calQ}{\mathcal{Q}}
\newcommand{\R}{\mathbb{R}}
\newcommand{\calR}{\mathcal{R}}
\newcommand{\calS}{\mathcal{S}}
\newcommand{\calT}{\mathcal{T}}
\newcommand{\U}{U}
\newcommand{\calU}{\mathcal{U}}
\newcommand{\calV}{\mathcal{V}}
\newcommand{\W}{{\mathbb{W}}}
\newcommand{\calY}{\mathcal{Y}}
\newcommand{\Z}{\mathbb{Z}}
\newcommand{\calZ}{\mathcal{Z}}
\DeclareMathOperator{\dev}{dev}
\DeclareMathOperator{\sym}{sym}
\DeclareMathOperator{\tr}{{\rm tr}}
\DeclareMathOperator{\supp}{supp}
\DeclareMathOperator*{\argmin}{Arg\,min}
\newcommand{\closure}[1]{\overline{#1}}
\newcommand{\strong}{\to}
\newcommand{\weak}{\,\xrightharpoonup{}\,}
\newcommand{\weakstar}{\xrightharpoonup{*}}
\newcommand{\strongtwoscale}{\xrightarrow{\,2\,}}
\newcommand{\weaktwoscale}{\xrightharpoonup{2}}
\newcommand{\weakstartwoscale}{\xrightharpoonup{2-*}}
\renewcommand{\div}{\operatorname{div}}
\newcommand{\fvol}{f}
\newcommand{\fsurf}{g}
\newcommand{\Dir}{D}
\newcommand{\eps}{\varepsilon}
\newcommand{\zerothI}[1]{\overline{#1}}
\newcommand{\firstI}[1]{\widehat{#1}}
\newcommand{\scaling}{{S}}
\newcommand{\Mb}{\mathcal{M}_b}
\newcommand{\mres}[1]{\lfloor{#1}}
\newcommand{\charfun}[1]{\mathbbm{1}_{#1}}
\newcommand{\genprod}{\stackrel{\text{gen.}}{\otimes}}
\newcommand{\calXzero}[1]{\mathcal{X}_0({#1})}
\newcommand{\calYzero}[1]{\Upsilon_0({#1})}
\newcommand{\calZzero}[1]{\mathring{\Upsilon}_0({#1})}
\newcommand{\ext}[1]{\widetilde{#1}}
\newcommand{\tangential}{_\nu^\perp}
\newcommand{\normal}{_\nu}
\newcommand{\E}{{\bf E}}
\newcommand{\D}{\nabla}
\newcommand{\Hkin}{\mathbb{H}_{\rm kin}}
\newcommand{\Hiso}{H_{\rm iso}}
\newcommand{\chikin}{\chi_{\rm kin}}
\newcommand{\chiiso}{\chi_{\rm iso}}
\newcommand{\disspot}{\mathbf{R}}
\newcommand{\gendisspot}{\widehat{\disspot}}
\newcommand{\red}[1]{#1^{\rm red}}
\title{Homogenization of elasto-plastic plate equations with vanishing hardening}
\author[M. Bu\v{z}an\v{c}i\'{c}]{Marin Bu\v{z}an\v{c}i\'{c}}
\address[M. Bu\v{z}an\v{c}i\'{c}]{University of Zagreb Faculty of Electrical Engineering and Computing, Unska 3, 10000 Zagreb, Croatia}
\email{marin.buzancic@fer.unizg.hr}
\author[I. Vel\v{c}i\'{c}]{Igor Vel\v{c}i\'{c}}
\address[I. Vel\v{c}i\'{c}]{University of Zagreb Faculty of Electrical Engineering and Computing, Unska 3, 10000 Zagreb, Croatia}
\email{igor.velcic@fer.unizg.hr}
\author[J. \v{Z}ubrini\'{c}]{Josip \v{Z}ubrini\'{c}}
\address[J. \v{Z}ubrini\'{c}]{University of Zagreb Faculty of Electrical Engineering and Computing, Unska 3, 10000 Zagreb, Croatia}
\email{josip.zubrinic@fer.unizg.hr}
\subjclass[2020]{74C05, 74G65, 74K20, 49J45, 74Q09, 35B27}
\keywords{perfect plasticity, periodic homogenization, dimension reduction, quasistatic evolution, rate-independent processes, $\Gamma$--convergence}
\begin{document}

\begin{abstract}

We study the asymptotic behavior of thin heterogeneous elastoplastic plates in the framework of linearized elastoplasticity, focusing on the regime where the plate thickness vanishes much faster than the characteristic scale of the material's periodic microstructure.
In contrast to earlier analyzes that required restrictive geometric assumptions on admissible yield surfaces, our approach accommodates general relations between phases without imposing any specific ordering.

The analysis proceeds in two main steps.
First, we rigorously derive a heterogeneous plate model with both isotropic and kinematic hardening through a dimension reduction procedure based on evolutionary $\Gamma$--convergence.
This result extends existing plate models for homogeneous materials to the heterogeneous setting and allows for general forms of hardening and dissipation potentials.
In the second step, we perform two-scale homogenization while simultaneously letting the hardening tend to zero.
This process yields an effective elasto-perfectly plastic plate model and, crucially, provides a characterization of the dissipation potential at the interfaces between different phases.
The resulting dissipation functional takes the form of a non-local inf-convolution of the traces of plastic strains on both sides of the interface, reflecting the Kirchhoff--Love structure of admissible displacements.
\end{abstract}

\maketitle

\vspace{-\baselineskip}

\section{Introduction}
In this work, we consider a lower dimensional homogenized thin plate model within the framework of linearized elasto-plasticity.
Our investigation builds on the results in \cite{Buzancic.Davoli.Velcic.2024.1st} and \cite{Buzancic.Davoli.Velcic.2024.2nd}, where the authors analyzed the asymptotic behavior of perfectly plastic composite materials when both the thickness $h$ of the plate and the periodicity $\varepsilon_h$ of the microstructure tend to zero.
The effective models are dependent on the interaction between homogenization and dimension reduction, which can be described by the ratio $\lim_{h \to 0} \frac{h}{\varepsilon_h} = \gamma \in [0, +\infty]$.
In particular, \cite{Buzancic.Davoli.Velcic.2024.2nd} imposed certain geometric (and very restrictive) constraints on admissible yield surfaces in the extreme regimes $\gamma = 0$ and $\gamma = +\infty$ to ensure lower semicontinuity of the dissipation potential -- a constraint not required in the intermediate regimes considered in \cite{Buzancic.Davoli.Velcic.2024.1st}.
In the present contribution, {\PINK our results are related to the limiting case $\gamma = 0$\BLACK}, corresponding to the situations where the thickness of the plate vanishes on a much faster scale than the material's periodicity, without assuming any specific ordering between phases.
A key element in the analysis is the two step process, in which we first derive a heterogeneous plate model with hardening via dimension reduction, and then secondly we perform homogenization and simultaneously let hardening tend to zero.
With this approach at hand we are able to obtain the limit model together with the appropriate definition of the dissipation potential at the interface of different phases (this was not needed in \cite{Buzancic.Davoli.Velcic.2024.2nd} since there at the interface one always puts the "softer" dissipation potential).
More precisely, the effective dissipation functional is defined as the non-local inf-convolution of the traces of plastic strains from both sides of the interface having a specific rank-one structure, which is inherited from the Kirchhoff--Love structure of the displacements.
From the mathematical point of view, to obtain the limit model, besides the tools developed in \cite{Buzancic.Davoli.Velcic.2024.2nd}, one needs to obtain the appropriate compactness results at the interface of the phases (see \Cref{section two-scale limits}), to define appropriately the dissipation potential at the interface, to prove lower semicontinuity result with respect to this definition (see \Cref{semicontinuity of the dissipation functional}) and to prove the appropriate lower bound inequality for plastic dissipation with respect to this definition (this replaces the construction of the recovery sequence, see \Cref{section stress-plastic strain duality on the cell} and \Cref{subs:dis}).

Next we outline the literature on elasto-plasticity important for this work.
In the context of non-vanishing hardening we mention \cite{Mielke.2005, Mielke.Roubicek.2015} where rate-independent systems are introduced and analyzed with existence and uniqueness results.
In \cite{Liero.Mielke.2011,Liero.Roche.2012} the limit homogeneous plate models are obtained under the assumption of non-vanishing kinematic hardening which takes the specific form.
The study of composite elastoplastic materials is a challenging endeavor.
Homogenization of elasto-plasticity quasistatic case with hardening is analyzed in \cite{Mielke.Timofte.2007}, see also \cite{Schweizer.Veneroni.2010, Schweizer.Veneroni.2015, Heida.Schweizer.2016, Heida.Schweizer.2018} for the homogenization of the evolution plasticity with inertia term and non-periodic case respectively.
In all of these situations the micro variable is part of the evolution equations, i.e. it is not possible to obtain the macroscopic evolution by eliminating micro variable from the effective equations.
For completeness, we also mention \cite{Cristowiak.Kreisbeck.2020, Cristowiak.Kreisbeck.2017, Davoli.Ferreira.Kreisbeck.2021, Davoli.Kreisbeck.2022} for an analysis of large-strain stratified composites in crystal plasticity.
In the context of linearized elasto-perfect plasticity (i.e. without hardening), the existence result for homogeneous material is given in \cite{DalMaso.DeSimone.Mora.2006}.
The analysis in the case without hardening is more involved from the mathematical point of view, since one has to leave the framework of Sobolev functions and work in the framework of functions with bounded deformation.
Derivation of the plate model without hardening is done in \cite{Davoli.Mora.2013,Davoli.Mora.2015} for the homogeneous case.
Heterogeneous $3d$ elasto-perfect plasticity was analyzed in \cite{Francfort.Giacomini.2012}.
The important part of the analysis was to define properly the dissipation potential at the interface, which is achieved through proper inf-convolution.
Namely, since the plastic strain is a measure, it can concentrate at the interface of the phases and thus it is important to know how the dissipation potential behaves at the interface (this is not needed in the case with hardening, when plastic strain is $L^2$ function).
Homogenization of elasto-perfect plasticity is analyzed in \cite{Francfort.Giacomini.2014} where the authors used the stress-strain approach to obtain the effective model.
In that way one can bypass the construction of recovery sequence to prove the global stability.
Based on that approach the authors of already mentioned works \cite{Buzancic.Davoli.Velcic.2024.1st, Buzancic.Davoli.Velcic.2024.2nd} derived the limit plate models in different regimes by doing simultaneous homogenization and dimension reduction.

In this work we firstly derive the limit plate model with hardening and in that way we generalize the plate model obtained in \cite{Liero.Roche.2012}.
The generalization is done in the way that we analyze the heterogeneous case with both isotropic and kinematic hardening and also we {\RED do not\BLACK} assume any specific form of dissipation potential and hardening tensor that was assumed in \cite{Liero.Mielke.2011}, see \Cref{remark comparison}.
Then, using the approach started in \cite{Buzancic.Davoli.Velcic.2024.1st} we let simultaneously the hardening and the periodicity of the material to zero to obtain the effective plate model in the case without hardening.
From a mathematical point of view, starting from the $2d$ elasto-plastic equations with hardening has a significant consequence for the effective plate model, as it affects the behavior of the limit plastic strain at the interface and, consequently, the definition of the dissipation functional at the interface, which in this case takes the form of a non-local inf-convolution{\PINK,\BLACK} see \Cref{remark interface potential}.
It is expected that this model can also be obtained from three-dimensional elasto-plasticity by simultaneously letting the hardening, the plate thickness, and the material oscillations tend to zero, at least in certain regimes -- specifically, when the hardening is not too small compared to the thickness, and the plate thickness is much smaller than the scale of the material oscillations.

The structure of the paper is as follows.
\Cref{preliminaries and notation} introduces our notation and recalls some preliminary results on convex functions of measures, disintegration of Radon measures, as well as some auxiliary claims about stress tensors.
In \Cref{limit of quasistatic evolutions - h to 0} we formulate the three-dimensional elasto-plastic plate model with hardening and study the quasistatic evolution of the thin plate.
We justify the plate model via $\Gamma$--convergence.
The section concludes with a characterization of the limiting quasistatic evolution and admissible stress configurations through the formulation in terms of in-plane and out-of-plane displacements and plastic strains.
The main result of \Cref{limit of quasistatic evolutions - h to 0} is \Cref{main result 1} (see also \Cref{improved convergence}).
This derived model of a heterogeous elasto-plastic plate with hardening is then used as a starting point for the analysis in \Cref{limit of quasistatic evolutions - eps to 0}, which is devoted to the simultaneous homogenization and vanishing of hardening effects in materials with periodic microstructure.
We make use of two-scale convergence and unfolding techniques of symmetrized gradients of $BD$ function and Hessians of $BH$ functions.
Furthermore, we discuss semicontinuity of the dissipation functional, duality between stress and plastic strain, as well as a two-scale version of the lower bound of dissipation functional in the absence of any assumption on the ordering of the phases.
Eventually, at the end of the section we prove the main result of the paper \Cref{main result 2} (see also \Cref{cor improved 2}), where we show the convergence of quasistatic evolutions to the limiting quasistatic evolution.
As a by product of our analysis one can easily see the limit homogenized plate model in the case of non-vanishing hardening, the result which is to the best of our knowledge not present in the literature.

\section{Notation and preliminary results} \label{preliminaries and notation}

Points $x \in \R^3$ will be expressed as pairs $(x',x_3)$, with $x' \in \R^2$ and $x_3 \in \R$, whereas we will write $y \in \calY$ to identify points on a flat 2-dimensional torus, $\mathcal{Y}=\R^2/ Z^2$.
We will denote by $I$ the open interval $I := \left(-\frac{1}{2}, \frac{1}{2}\right)$.
The notation $\nabla_{x'}$ and $\nabla_{y}$ will describe the gradients with respect to $x'$ and $y$, respectively.

For $N=2,3$, we use the notation $\M^{N \times N}$ to identify the set of real $N \times N$ matrices.
We will always implicitly assume this set to be endowed with the classical Frobenius scalar product $A : B := \sum_{i,j}A_{ij}\,B_{ij}$ and the associated norm $|A| := \sqrt{A:A}$, for $A,B\in \mathbb{M}^{N\times N}$.
The subspaces of symmetric and deviatoric matrices will be denoted by $\M^{N \times N}_{\sym}$ and $\M^{N \times N}_{\dev}$, respectively, and we denote by $\mathcal{L}(\M^{3 \times 3}_{\sym})$ and $\mathcal{L}(\M^{3 \times 3}_{\dev})$ the set of symmetric endomorphisms on these spaces.
For the trace and deviatoric part of a matrix $A \in \mathbb{M}^{N\times N}$ we will adopt the notation ${\rm tr}{A}$, and
\[
	A_{\dev} = A - \frac{1}{N}{\rm tr}{A}.
\]

Given two vectors $a, b \in \R^N$, we will adopt standard notation for their scalar product and Euclidean norm, namely $a \cdot b$ and $|a|$.
The dyadic (or tensor) product of $a$ and $b$ will be identified as by $a \otimes b$; correspondingly, the {\em symmetrized tensor product} $a \odot b$ will be the symmetric matrix with entries $(a \odot b)_{ij} := \frac{a_i b_j + a_j b_i}{2}$.
We recall that ${\rm tr}{\left(a \odot b\right)} = a \cdot b$, and $|a \odot b|^2 = \frac{1}{2}|a|^2|b|^2 + \frac{1}{2}(a \cdot b)^2$, so that
\begin{equation*}
	\frac{1}{\sqrt{2}}|a||b| \leq |a \odot b| \leq |a||b|.
\end{equation*}

Given a vector $v \in \R^3$, we will use the notation $v^{1,2}$ to denote the two-dimensional vector having its same first two components
\begin{equation*}
	v^{1,2} := \begin{pmatrix} v_1 \\ v_2 \end{pmatrix}.
\end{equation*}
In the same way, for every $A \in \M^{3 \times 3}$, we will use the notation $A^{1,2}$ to identify the minor
\begin{equation*}
	A^{1,2} := \begin{pmatrix} A_{11} & A_{12} \\ A_{21} & A_{22} \end{pmatrix}.
\end{equation*}
For $h>0$, we denote by $\scaling_h : \R^3 \to \R^3$ the mapping
\begin{equation} \label{scaling S_h}
    \scaling_h a = (a_1,a_2, h a_3),
\end{equation}
and we will denote by $S_{\frac{1}{h}}$ its inverse mapping.

We will adopt standard notation for the Lebesgue and Hausdorff measure, as well as for Lebesgue and Sobolev spaces, and for spaces of continuously differentiable functions.
Given a set $U \subset \R^N$, we will denote its closure by $\closure{U}$ and its characteristic function by $\charfun{U}$.
For a Banach space $V$ we denote by $V'$ its dual.

Let $E$ be an Euclidean space.
We will distinguish between the spaces $C_c^k(\U;E)$ ($C^k$ functions with compact support contained in $\U$) and $C_0^k(\U;E)$ ($C^k$ functions "vanishing on $\partial{\U}$").
The notation $C(\calY;E)$ will indicate the space of all continuous functions which are $[0,1]^2$-periodic.
Analogously, we will define $C^k(\calY;E) := C^k(\R^2;E) \cap C(\calY;E)$.
With a slight abuse of notation, $C^k(\calY;E)$ will be identified with the space of all $C^k$ functions on the 2-dimensional torus.
For a measure $\mu$ defined on some measurable space, the space $L^p_{\mu}$ will denote the space of $L^p$ integrable functions with respect to $\mu$.

Throughout the text, the letter $C$ stands for generic positive constants whose value may vary from line to line.

A collection of all preliminary results which will be used throughout the paper can be found in \cite[Section 2]{Buzancic.Davoli.Velcic.2024.1st}.
For an overview on basic notions in measure theory, functions of bounded variation ($BV$), as well as functions of bounded deformation ($BD$) and bounded Hessian ($BH$), we refer the reader to, e.g., \cite{fonseca2007modern}, \cite{ambrosio2000functions}, \cite{braides1998approximation}, to the monograph \cite{Temam.1985}, as well as to \cite{Demengel.1984}.
By $BV_{\rm loc}(U)$ we denote the space of functions that belong to $BV(\widetilde{U})$, for every open, bounded $\widetilde{U}$ compactly contained in $U$.

\subsection{Convex functions of measures} \label{Convex functions of measures}

Let $U$ be an open set of $\R^N$ and $X$ a finite-dimensional vector space, and denote by $\Mb(U;X)$ the set of finite $X$-valued Radon measures on the set $U$.
For every $\mu \in \Mb(U;X)$ let $\frac{d\mu}{d|\mu|}$ be the Radon--Nikodym derivative of $\mu$ with respect to its variation $|\mu|$.
By $\mu\mres{A}$ we denote the restriction of the measure $\mu$ on the set $A$.
Let $H : X \to [0,+\infty)$ be a convex and positively one-homogeneous function such that
\begin{equation} \label{coercivity of H}
	r |\xi| \leq H(\xi) \leq R |\xi| \quad \text{for every}\, \xi \in X,
\end{equation}
where $r$ and $R$ are two constants, with $0 < r \leq R$.

Using the theory of convex functions of measures (see \cite{Goffman.Serrin.1964} and \cite{Demengel.Temam.1984}) it is possible to define a non-negative Radon measure $H(\mu) \in \Mb^+(U)$ as
\[
	H(\mu)(A) := \int_{A} H\left(\frac{d\mu}{d|\mu|}\right) \,d|\mu|,
\]
for every Borel set $A \subset U$, as well as an associated
 functional $\calH: \Mb(U;X) \to [0,+\infty)$ given by
\[
	\calH(\mu) := H(\mu)(\U) = \int_{\U} H\left(\frac{d\mu}{d|\mu|}\right) \,d|\mu|
\]
and being lower semicontinuous on $\Mb(U;X)$ with respect to weak* convergence, cf. \cite[Theorem 2.38]{ambrosio2000functions}).

Let $a,\, b \in [0,T]$ with $a \leq b$.
The \emph{total variation} of a function $\mu : [0,T] \to \Mb(U;X)$ on $[a,b]$ is defined as
\begin{equation*}
	\calV(\mu; a, b) := \sup\left\{ \sum_{i = 1}^{n-1} \left\|\mu(t_{i+1}) - \mu(t_i)\right\|_{\Mb(U;X)} : a = t_1 < t_2 < \ldots < t_n = b,\ n \in \N \right\}.
\end{equation*}
Analogously, the \emph{$\calH$-variation} of a function $\mu : [0,T] \to \Mb(U;X)$ on $[a,b]$ is given by
\begin{equation*}
	\calV_{\calH}(\mu; a, b) := \sup\left\{ \sum_{i = 1}^{n-1} \calH\left(\mu(t_{i+1}) - \mu(t_i)\right) : a = t_1 < t_2 < \ldots < t_n = b,\ n \in \N \right\}.
\end{equation*}
From \eqref{coercivity of H} it follows that
\begin{equation} \label{equivalence of variations}
	r \calV(\mu; a, b) \leq \calV_{\calH}(\mu; a, b) \leq R \calV(\mu; a, b).
\end{equation}

\subsection{Generalized products}

Let $S$ and $T$ be measurable spaces and let $\mu$ be a measure on $S$.
Given a measurable function $f : S \to T$, we denote by $f_{\#}\mu$ the \emph{push-forward} of $\mu$ under the map $f$, defined by
\[
	f_{\#}\mu(B) := \mu\left(f^{-1}(B)\right), \quad \text{ for every measurable set $B \subseteq T$}.
\]
In particular, for any measurable function $g : T \to \closure{\R}$ we have
\[
	\int_{S} g \circ f \,d\mu = \int_{T} g \,d(f_{\#}\mu).
\]
Note that in the previous formula $S = f^{-1}(T)$.

Let $S_1 \subset \R^{N_1}$, $S_2 \subset \R^{N_2}$, for some $N_1,N_2 \in \N$, be open sets, and let $\eta \in \Mb^+(S_1)$.
We say that a function $x_1 \in S_1 \mapsto \mu_{x_1} \in \Mb(S_2; \R^M )$ is $\eta$-measurable if $x_1 \in S_1 \mapsto \mu_{x_1}(B)$ is $\eta$-measurable for every Borel set $B \subseteq S_2$.

Given a $\eta$-measurable function $x_1 \mapsto \mu_{x_1}$ such that $\int_{S_1}|\mu_{x_1}|\,d\eta<+\infty$, then the \emph{generalized product} $\eta \genprod \mu_{x_1}$ satisfies $ \eta \genprod \mu_{x_1} \in \Mb(S_1 \times S_2; \R^M )$ and is such that 
\[
	\langle \eta \genprod \mu_{x_1}, \varphi \rangle := \int_{S_1} \left( \int_{S_2} \varphi(x_1,x_2) \,d\mu_{x_1}(x_2) \right) \,d\eta(x_1),
\]
for every bounded Borel function $\varphi : S_1 \times S_2 \to \R$.

\subsection{Traces of stress tensors} \label{sub:traces}

In this last subsection we collect some properties of classes of maps which will include our elasto-plastic stress tensors.

We suppose here that $\U$ is an open bounded set of class $C^2$ in $\R^N$.
If $\sigma \in L^2(\U;\M^{N \times N}_{\sym})$ and $\div\sigma \in L^2(\U;\R^N)$, then we can define a distribution $[ \sigma \nu ]$ on $\partial{\U}$ by
\begin{equation} \label{traces of the stress}
	[ \sigma \nu ](\psi) := \int_{\U} \psi \cdot \div\sigma \,dx + \int_{\U} \sigma : \nabla \psi \,dx,
\end{equation}
for every $\psi \in H^1(\U;\R^N)$.
It follows that $[ \sigma \nu ] \in H^{-1/2}(\partial{\U};\R^N)$ (see, e.g., \cite[Chapter 1, Theorem 1.2]{temam2001navier}).
If, in addition, $\sigma \in L^{\infty}(\U;\M^{N \times N}_{\sym})$ and $\div\sigma \in L^N(\U;\R^N)$, then \eqref{traces of the stress} holds for $\psi \in W^{1,1}(\U;\R^N)$.
By Gagliardo's extension theorem \cite[Theorem 1.II]{Gagliardo.1957}, in this case we have $[ \sigma \nu ] \in L^{\infty}(\partial{\U};\R^N)$, and 
\begin{equation*}
	[ \sigma_k \nu ] \weakstar [ \sigma \nu ] \quad \text{weakly* in $L^{\infty}(\partial{\U};\R^N)$},
\end{equation*}
whenever $\sigma_k \weakstar \sigma$ weakly* in $L^{\infty}(\U;\M^{N \times N}_{\sym})$ and $\div\sigma_k \weak \div\sigma$ weakly in $L^N(\U;\R^N)$.

For $\sigma \in L^2(\U;\M^{N \times N}_{\sym})$ such that $\div\sigma \in L^2(\U;\R^N)$, we will consider the normal and tangential parts of $[ \sigma \nu ]$, defined by
\begin{equation*}
	[ \sigma \nu ]\normal := ([ \sigma \nu ] \cdot \nu) \nu, \quad
	[ \sigma \nu ]\tangential := [ \sigma \nu ]-([ \sigma \nu ] \cdot \nu) \nu.
\end{equation*}
Since $\nu \in C^1(\partial{\U};\R^N)$, we have that $[ \sigma \nu ]\normal,\, [ \sigma \nu ]\tangential \in H^{-1/2}(\partial{\U};\R^N)$.
If, in addition, $\sigma_{\dev} \in L^{\infty}(\U;\M^{N \times N}_{\dev})$, then it was proved in \cite[Lemma 2.4]{Kohn.Temam.1983} that $[ \sigma \nu ]\tangential \in L^{\infty}(\partial{\U};\R^N)$ and
\begin{equation*}
	\|[ \sigma \nu ]\tangential\|_{L^{\infty}(\partial{\U};\R^N)} \leq \frac{1}{\sqrt2} \|\sigma_{\dev}\|_{L^{\infty}(\U;\M^{N \times N}_{\dev})}.
\end{equation*}

More generally, if $\U$ has Lipschitz boundary and is such that there exists a compact set $S \subset \partial{U}$ with $\calH^{N-1}(S) = 0$ such that $\partial{U} \setminus S$ is a $C^2$-hypersurface, then arguing as in \cite[Section 1.2]{Francfort.Giacomini.2012} we can uniquely determine $[ \sigma \nu ]\tangential$ as an element of $L^{\infty}(\partial{U};\R^N)$ through any approximating sequence $\{\sigma_n\} \subset C^{\infty}(\closure{\U};\M^{N \times N}_{\sym})$ such that 
\begin{align*}
	& \sigma_n \strong \sigma \quad \text{strongly in } L^2(\U;\M^{N \times N}_{\sym}),\\
	& \div\sigma_n \strong \div\sigma \quad \text{strongly in } L^2(\U;\R^N),\\
	& \| (\sigma_n)_{\dev} \|_{L^{\infty}(\U;\M^{N \times N}_{\dev})} \leq \| \sigma_{\dev} \|_{L^{\infty}(\U;\M^{N \times N}_{\dev})},
\end{align*}
as the weak $*$ limit of $[\sigma_n\nu]_{\nu}^\perp=[(\sigma_n)_{\rm dev}\nu]_{\nu}^\perp$.


\section{Dimension reduction of heterogeneous elasto-plastic plate model with hardening} \label{limit of quasistatic evolutions - h to 0}

In this section we derive the limit model for heterogeneous plate with hardening from $3d$ elasto-plasticity with hardening.
In \Cref{section setup} we introduce the basic objects and state the equations of elasto-plasticity.
In \Cref{section rescaled} we rescale the problem on the canonical domain, in \Cref{section quasistatic evolution} we give the energetic formulation of the equations of elasto-plasticity and state the existence and uniqueness result.
In \Cref{section gamma converegence results} we introduce the necessary objects to define limit quasistatic evolution and prove the auxiliary results for obtaining $\Gamma$--convergence.
In \Cref{seclimquasiev}, we prove the convergence of the three-dimensional quasistatic evolution to the limiting (reduced) quasistatic evolution. Finally, in \Cref{Admissible stress configurations}, we formulate the equations in terms of the limiting stress, which will be used in the next section to derive the limit model without hardening, since the approach there relies on the stress-strain formulation
The main results of this section are \Cref{main result 1} and \Cref{improved convergence}, which give the convergence result.

\subsection{Setup}\label{section setup} 

In this section we introduce elasticity and hardening tensor as well as dissipation potential and state the equations of elasto-plasticity, We also introduce the set of admissible unknowns of the equations.
We will not rely in our analysis on the formulation of the equations given here, but will use an equivalent energetic formulation given in \Cref{section quasistatic evolution}.

Let $\omega\subset\R^2$ be a planar, bounded Lipschitz domain and $I = \left(-\frac{1}{2}, \frac{1}{2}\right)\subset \R$, an open interval.
Given a small positive number $h>0$, we define the three-dimensional thin plate
\[
\Omega^h := \omega \times (h I),
\]
with the boundary partitioned into the lateral surface $\Gamma^h_0 := \partial\omega \times (h I)$ and the transverse boundary $\Gamma^h := \omega \times \partial(h I)$.
We will denote the Dirichlet boundary of $\omega$ with $\gamma_D \subset \partial \omega$, as well as $\Gamma_D^h = \gamma_D \times (hI)$.
The assumption is that $\gamma_D$ is of positive measure.

\medskip
\noindent{\bf The elasticity tensor.} \nopagebreak

Let $\C \in L^\infty(\omega; \mathcal{L}(\M^{3 \times 3}_{\sym}))$ be the {\em elasticity tensor}, considered as a map from $\omega$ taking values in the set of symmetric positive definite linear operators, $\C(x'): \M^{3 \times 3}_{\sym} \to \M^{3 \times 3}_{\sym}$, defined as
\begin{equation*}
	\C(x') \xi := \C_{\dev}(x')\,\xi_{\dev} + \left(k(x')\,{\rm tr}{\xi}\right)\,I_{3 \times 3} \quad \text{ for a.e. } x' \in \omega \text{ and } \xi \in \M^{3 \times 3}_{\sym},
\end{equation*}
where $\C_{\dev} \in L^\infty(\omega; \mathcal{L}(\M^{3 \times 3}_{\dev}))$, $k \in L^\infty(\omega)$.
Additionally we assume that there exist two constants $r_{\C}$ and $R_{\C}$, with $0 < r_{\C} \leq R_{\C}$, such that for a.e. $x' \in \omega$ we have:
\begin{align*}
	& r_{\C} |\xi|^2 \leq \C_{\dev}(x') \xi : \xi \leq R_{\C} |\xi|^2 \quad \text{ for every }\xi \in \M^{3 \times 3}_{\dev},\\
	& r_{\C} \leq k(x') \leq R_{\C}.
\end{align*}

\medskip
\noindent{\bf The hardening tensors.} \nopagebreak

Let $\Hkin \in L^\infty(\omega; \mathcal{L}(\M^{3 \times 3}_{\dev}))$ be the {\em kinematic hardening tensor} and $\Hiso \in L^\infty(\omega)$ be the {\em isotropic hardening tensor}.
We assume that there exist constants $r_{\Hkin}$, $r_{\Hiso}$, $R_{\Hkin}$ and $R_{\Hiso}$, with $0 \leq r_{\Hkin} \leq R_{\Hkin}$, $0 \leq r_{\Hiso} \leq R_{\Hiso}$ and $r_{\Hkin} + r_{\Hiso} > 0$, such that for a.e. $x' \in \omega$ we have:
\begin{align*}
	& r_{\Hkin} |\xi|^2 \leq \Hkin(x') \xi : \xi \leq R_{\Hkin} |\xi|^2 \quad \text{ for every }\xi \in \M^{3 \times 3}_{\dev},\\
	& r_{\Hiso} \leq \Hiso(x') \leq R_{\Hiso};
\end{align*}
\begin{remark}
The assumptions above allow for the cases where $\Hkin \equiv 0$ and $\Hiso > 0$ (which corresponds to pure isotropic hardening) or $\Hkin > 0$ and $\Hiso \equiv 0$ (which corresponds to pure kinematic hardening).
In the latter case the isotropic hardening internal state variable $\alpha$ (see below) can be removed from the set of unknowns, but it will not affect the analysis presented in the paper.
\end{remark}

\medskip
\noindent{\bf The dissipation potential.} \nopagebreak

We assume that for each point $x' \in \omega$ there exist convex compact sets $K(x') \in \M^{3 \times 3}_{\dev}$.
Additionally, we assume that there exist two constants $r_K$ and $R_K$, with $0 < r_K \leq R_K$, such that for every $x'$
\begin{equation*}
	\{ \xi \in \M^{3 \times 3}_{\dev} : |\xi| \leq r_K \} \subseteq K(x') \subseteq\{ \xi \in \M^{3 \times 3}_{\dev}: |\xi| \leq R_K \}.
\end{equation*}
For $x' \in \omega$, we define the dissipation potential $\disspot : \omega \times \M^{3 \times 3}_{\dev} \to [0,+\infty]$ to be 
\begin{equation*}
	\disspot(x', p) := \sup\limits_{\tau \in K(x')} \tau : p.
\end{equation*}
It follows that, for $x' \in \omega$, $\disspot(x',\cdot)$ is convex, positively 1-homogeneous, and satisfies
\begin{equation} \label{coercivity of R}
	r_K |p| \leq \disspot(x', p) \leq R_K |p| \quad \text{for every}\, p \in \M^{3 \times 3}_{\dev}.
\end{equation}
Also we have for $x' \in \omega $ 
\begin{equation} \label{distriangle} 
	\disspot(x', p_1+p_2)\leq \disspot(x', p_1)+ \disspot(x', p_2),\quad \forall p_1,p_2 \in \M^{3 \times 3}_{\dev}.
\end{equation} 
We also {\PINK assume\BLACK} that the function $x'\mapsto \disspot (x',p)$ is Borel measurable for every $p \in \M^{3 \times 3}_{\dev}$ which makes $\disspot(\cdot,\cdot)$ Carath\'{e}odory function.

Finally, we define the generalized dissipation potential $\gendisspot : \omega \times \M^{3 \times 3}_{\dev} \times \R \to [0,+\infty]$ as
\begin{equation} \label{dissipation potential with hardening}
	\gendisspot(x', p, \alpha)
	=
	\begin{cases}
	\disspot(x', p) & \text{if $\disspot(x', p) \leq \alpha$}, \\
	+ \infty & \text{else}.
	\end{cases}
\end{equation}
This potential encompasses the cases of both von Mises and Tresca flow rule (see, e.g., \cite[Section 4]{han2012plasticity}).

\begin{remark} \label{remark on Reshetnyak theorem conditions - gendisspot}
We point out that $\gendisspot$ is a Borel function.
Furthermore, for each $x' \in \omega$, the function $(p, \alpha) \mapsto \gendisspot(x', p, \alpha)$ is lower semicontinuous, convex and positively 1-homogeneous.
The {\PINK latter condition means\BLACK} $\gendisspot(x', \lambda p,\lambda \alpha) = \lambda \gendisspot(x', p, \alpha)$ for all $\lambda \geq 0$ and $(p, \alpha) \in \M^{3 \times 3}_{\dev} \times \R$.
For a fixed $x' \in \omega$, the corresponding generalized elastic domain $\widehat{K}(x') \subset \M^{3 \times 3}_{\dev} \times \R$ is defined via 
\begin{equation*}
	\widehat{K}(x') := \partial \gendisspot(x', 0),
\end{equation*}
which is the subdifferential of $\gendisspot(x', \cdot)$ at $0$.
\end{remark}

\medskip
\noindent{\bf Admissible tuples and energy.} \nopagebreak

On $\Gamma_\Dir^h$ we prescribe a Dirichlet boundary datum being the trace of a map $w^h(t,\cdot) \in H^1(\Omega^h;\R^3)$.

The elasto-plastic properties of $\Omega^h$ are described in terms of the linearized strain tensor ${\E}u = \frac{1}{2}\left(\nabla u+\nabla u^T\right) \in \M^{3 \times 3}_{\sym}$, the plastic strain tensor $p \in \M^{3 \times 3}_{\dev}$,
and the internal isotropic hardening variable $\alpha \in \R$, via the stored energy density $\W : \omega \times \M^{3 \times 3}_{\sym} \times \M^{3 \times 3}_{\dev} \times \R \to \R$, which is assumed to be given in the following form
\begin{equation} \label{stored energy density}
	\W(x', {\E}, p, \alpha) := 
	\frac{1}{2}\C(x')({\E}-p):({\E}-p) 
	+ \frac{1}{2}\Hkin(x')p:p + \frac{1}{2}\Hiso(x')\alpha\cdot\alpha.
\end{equation}
Given time-dependent volume and surface loadings $\fvol^h(t,\cdot)\in L^2(\Omega^h;\R^3)$ and $\fsurf^h(t,\cdot)\in L^2(\partial\Omega^h\setminus \Gamma_\Dir^h;\R^3)$, the full elasto-plastic evolution problem can be written in the form
\begin{equation} \label{system of elastoplastic equations on the thin plate}
\begin{split}
	-\div(\partial_{{\E}}\W(\cdot, {\E}u, p, \alpha)) = \fvol^h(t,\cdot) \quad & \text{in}\,\,\Omega^h,\\
	0 \in \partial_{(p, \alpha)}\W(\cdot, {\E}u, p, \alpha) + \partial \gendisspot(\cdot, \dot{p}, \dot{\alpha}) \quad & \text{in}\,\,\Omega^h,\\
	u = w^h(t,\cdot) \quad & \text{on}\,\, \Gamma_\Dir^h,\\
	\partial_{{\E}}\W(\cdot, {\E}u, p, \alpha) \nu = \fsurf^h(t,\cdot) \quad & \text{on}\,\,\partial\Omega^h\setminus \Gamma_\Dir^h,
\end{split}
\end{equation}
where $\nu$ denotes the outer normal vector on $\partial\Omega^h$ and $\dot{p}, \dot{\alpha}$ denote the time derivative of $p$ and $\alpha$ respectively.
We call $\sigma = \partial_{{\E}}\W \in \M^{3 \times 3}_{\sym}$ the stress.

The {\em set of admissible displacements, strains and internal variables} for the boundary datum $w^h$ is denoted by 
$\calA^{\rm hard}(\Omega^h, w^h)$ and is defined as the class of all tuples 
$(u, e, p, \alpha) \in H^1(\Omega^h) \times L^2(\Omega^h;\M^{3 \times 3}_{\sym}) \times L^2(\Omega^h;\M^{3 \times 3}_{\dev}) \times L^2(\Omega^h)$ satisfying
\begin{eqnarray*}
	& {\E}u = e + p \quad \text{ in } \Omega^h,
	\\& u = w^h \quad {\PINK\text{ on } \Gamma_\Dir^h\BLACK}
	\\& \disspot(x', p(x)) \leq \alpha(x) \quad \text{a.e.} \text{ in } \Omega^h.
\end{eqnarray*}
The function $u$ represents the {\em displacement} of the plate, $e$ and $p$ are called the {\em elastic} and {\em plastic strain}, respectively, while $\alpha$ is referred to as the \emph{isotropic hardening internal state variable}.

We will use the formulation given by \eqref{system of elastoplastic equations on the thin plate} in the form of the energetic formulation from \cite{Mielke.Roubicek.2015}, which {\RED does not\BLACK} contain any derivative in time.
Prior to that we will define the problem on the canonical domain as it is standard in dimension reduction problems.

\subsection{The rescaled problem} \label{section rescaled} 

As usual in dimension reduction problems, it is convenient to perform a change of variables in such a way to rewrite the system on a fixed domain independent of $h$.
To this purpose, we set
\begin{equation*}
	\Omega \,:=\, \omega \times I, \qquad 
	\Gamma_\Dir \,:=\, \gamma_\Dir \times I.
\end{equation*}
We consider the change of variables $\psi_h : \closure{\Omega} \to \closure{\Omega^h}$, defined as
\begin{equation} \label{eq:def-psih}
	\psi_h(x',x_3) := (x', hx_3) \quad \text{for every}\, (x',x_3) \in \closure{\Omega},
\end{equation}
and the linear operator $\Lambda_h : \M_{\sym}^{3 \times 3} \to \M_{\sym}^{3 \times 3}$ given by 
\begin{equation} \label{definition Lambda_h}
	\Lambda_h \xi:=\begin{pmatrix}
	\xi_{11} & \xi_{12} & \frac{1}{h}\xi_{13}
	\vspace{0.1 cm}\\
	\xi_{21} & \xi_{22} & \frac{1}{h}\xi_{23}
	\vspace{0.1 cm}\\
	\frac{1}{h}\xi_{31} & \frac{1}{h}\xi_{32} & \frac{1}{h^2}\xi_{33}
	\end{pmatrix}
	\quad\text{for every }\xi \in \M^{3 \times 3}_{\sym}.
\end{equation}
To any tuple $(\upsilon, \eta, q, \beta) \in \calA^{\rm hard}(\Omega^h, w^h)$, we associate a rescaled tuple $(u^h, e^h, p^h, \alpha^h) \in H^1(\Omega;\R^3) \times L^2(\Omega;\M^{3 \times 3}_{\sym}) \times L^2(\Omega;\M^{3 \times 3}_{\sym}) \times L^2(\Omega)$ defined as follows:
\begin{equation*}
	u^h := \scaling_h v \circ \psi_h, \qquad
	e^h := \Lambda_{h}^{-1}\eta \circ \psi_h, \qquad
	p^h := \Lambda_{h}^{-1}q \circ \psi_h, \qquad 
	\alpha^h := \beta \circ \psi_h,
\end{equation*}
{\PINK where $\scaling_h$ has been introduced in \eqref{scaling S_h}.\BLACK}
By the definition of the class $\calA^{\rm hard}(\Omega^h, w^h)$ it follows that the scaled tuple $(u^h, e^h, p^h, \alpha^h)$ satisfies the following
\begin{eqnarray}
	& {\E}u^h = e^h + p^h \quad \text{ in } \Omega, \label{straindec*}\\
	& u^h = w \quad {\PINK\text{ on } \Gamma_\Dir\BLACK}, \label{boundcondp*}\\
	& \disspot(x', \Lambda_h p^h(x)) \leq \alpha^h(x) \quad \text{a.e.} \text{ in } \Omega,\\
	& p^h_{11} + p^h_{22} + \frac{1}{h^2}p^h_{33} = 0 \quad \text{ in } \Omega. \label{trace*}
\end{eqnarray}
Here $w = \scaling_h w^h \circ \psi_h$.
{\RED 
We are thus led to introduce the class of all tuples $(u^h, e^h, p^h, \alpha^h)$ satisfying \eqref{straindec*}--\eqref{trace*}:
\begin{align*}
	\calA_{h}^{\rm hard}(w) := \Big\{ 
	& (u^h, e^h, p^h, \alpha^h) \in H^1(\Omega;\R^3) \times L^2(\Omega;\M^{3 \times 3}_{\sym}) \times L^2(\Omega;\M^{3 \times 3}_{\sym}) \times L^2(\Omega) :
	\\& {\E}u^h = e^h + p^h \quad \text{ in } \Omega,
	\\& u^h = w \quad {\PINK\text{ on } \Gamma_\Dir\BLACK},
	\\& \disspot(x', \Lambda_h p^h(x)) \leq \alpha^h(x) \quad \text{a.e.} \text{ in } \Omega, 
	\\& p^h_{11} + p^h_{22} + \frac{1}{h^2}p^h_{33} = 0 \quad \text{ in } \Omega 
	\Big\},
\end{align*}
and to reformulate the system \eqref{system of elastoplastic equations on the thin plate} in an abstract form for the primary variables $(u^h, e^h, p^h, \alpha^h)$.
\BLACK}

\subsection{Quasistatic evolution for thin plate}\label{section quasistatic evolution} 

In this section we define the quasistatic evolution using the energetic formulation and state the existence and uniqueness result.

To express the energy balance it is useful to introduce the quadratic forms
\begin{align}
	\calQ^{\rm el}_{h}(e^h) & := \frac{1}{2} \int_{\Omega} \C(x') \Lambda_h e^h(x) : \Lambda_h e^h(x) \,dx, \label{definition Q^el_h}\\
	\calQ^{\rm hard}_{h}(p^h, \alpha^h) & := \frac{1}{2} \int_{\Omega} \Hkin(x') \Lambda_h p^h(x) : \Lambda_h p^h(x) \,dx + \frac{1}{2} \int_{\Omega} \Hiso(x') \alpha^h(x) \cdot \alpha^h(x) \,dx, \label{definition Q^hard_h}
\end{align}
the total load
\begin{equation} \label{definition ell}
	\big\langle \ell(t), u^h \big\rangle := \int_{\Omega} \fvol(t,x) \cdot u^h(x) \,dx + \int_{\partial{\Omega} \setminus \Gamma_\Dir} \fsurf(t,x) \cdot u^h(x) \,d\calH^{2},
\end{equation}
where $\fvol(t,\cdot):=\scaling_{1/h}\fvol^h(t, \cdot) \circ \psi_h$ and $\fsurf(t,\cdot):=\scaling_{1/h}\fsurf^h(t, \cdot) \circ \psi_h$ on $(\partial \omega \setminus \gamma_D) \times I$, $\fsurf(t,\cdot):=\frac{1}{h}\scaling_{1/h}\fsurf^h(t, \cdot) \circ \psi_h$ on $\omega \times \partial I$.
We also introduce the dissipation functional
\begin{equation} \label{definition R_h}
	\calR_{h}(p^h, \alpha^h) := \int_{\Omega} \gendisspot(x', \Lambda_h p^h(x), \alpha^h(x)) \,dx,
\end{equation}
defined for $u^h \in H^1(\Omega;\R^3)$, $e^h, p^h \in L^2(\Omega;\M^{3 \times 3}_{\sym})$ and $\alpha^h \in L^2(\Omega)$.

We note that, considering definition \eqref{dissipation potential with hardening}, the condition $\disspot(x', \Lambda_h p^h(x)) \leq \alpha^h(x)$ a.e. in $\Omega$ implies that, in fact, $\calR_{h}(p^h, \alpha^h) = \int_{\Omega} \disspot(x', \Lambda_h p^h(x)) \,dx$.

The $\calR_{h}$-variation of a map $(p^h, \alpha^h) : [0,T] \to L^2(\Omega;\M^{3 \times 3}_{\sym}) \times L^2(\Omega)$ on $[a,b]\subset[0,T]$ is defined as
\begin{equation*}
	\calV_{\calR_{h}}(p^h, \alpha^h; a, b) := \sup\left\{ \sum_{i = 1}^{n} \calR_{h}\!\left(p^h(t_{i+1}) - p^h(t_i), \alpha^h(t_{i+1}) - \alpha^h(t_i)\right) : a = t_1 < t_2 < \ldots < t_n = b,\ n \in \N \right\}.
\end{equation*}
\begin{remark} \label{remupozo1} 
Note that if $ \calV_{\calR_{h}}(p^h, \alpha^h; a, b)<+\infty$, then $\alpha^h(t_1)\leq \alpha^h(t_2)$ for a.e. $x \in \Omega$ and $a\leq t_1 \leq t_2\leq b$.
\end{remark}

For every $t \in [0, T]$ we prescribe a boundary datum $w(t) \in H^1(\Omega;\R^3)$ and assume that the map $t\mapsto w(t)$ is absolutely continuous from $[0, T]$ into $H^1(\Omega;\R^3)$.

\begin{definition} \label{h-quasistatic evolution}
Let $h > 0$.
An \emph{$h$-quasistatic evolution} for the boundary datum $w(t)$ and loads $\ell(t) \in (H^1(\Omega;\R^3))'$ is a function $t \mapsto (u^h(t), e^h(t), p^h(t), \alpha^h(t))$ from $[0,T]$ into $H^1(\Omega;\R^3) \times L^2(\Omega;\M^{3 \times 3}_{\sym}) \times L^2(\Omega;\M^{3 \times 3}_{\sym}) \times L^2(\Omega)$ which satisfies the following conditions:
\begin{enumerate}[label=(qs\arabic*)$_{h}$]
	\item \label{h-qs S} for every $t \in [0,T]$ we have $(u^h(t), e^h(t), p^h(t), \alpha^h(t)) \in \calA_{h}^{\rm hard}(w(t))$ and
	\begin{align*}
		& \calQ^{\rm el}_{h}(e^h(t)) + \calQ^{\rm hard}_{h}(p^h(t), \alpha^h(t)) - \big\langle \ell(t), u^h(t) \big\rangle
		\\& \leq \calQ^{\rm el}_{h}(\eta) + \calQ^{\rm hard}_{h}(\pi, \beta) + \calR_{h}(\pi - p^h(t), \beta - \alpha^h(t)) - \big\langle \ell(t), \upsilon \big\rangle,
	\end{align*}
	for every $(\upsilon, \eta, \pi, \beta) \in \calA_{h}^{\rm hard}(w(t))$.
	\item \label{h-qs E} 
	for every $t \in [0, T]$
	\begin{align*}
		& \calQ^{\rm el}_{h}(e^h(t)) + \calQ^{\rm hard}_{h}(p^h(t), \alpha^h(t)) 
		+ \calV_{\calR_{h}}(p^h, \alpha^h; 0, t) - \big\langle \ell(t), u^h(t) \big\rangle
		\\& = \calQ^{\rm el}_{h}(e^h(0)) + \calQ^{\rm hard}_{h}(p^h(0), \alpha^h(0)) - \big\langle \ell(0), u^h(0) \big\rangle 
		\\& + \int_0^t \left( \int_{\Omega} \C(x') \Lambda_h e^h(s) : {\E}\dot{w}(s) \,dx 
		- \big\langle \ell(s), \dot{w}(s) \big\rangle - \big\langle \dot{\ell}(s), u^h(s) \big\rangle \right) \,ds.
	\end{align*}
\end{enumerate}
\end{definition}

The following existence and uniqueness result of a quasistatic evolution for a general multi-phase material with hardening follows from classical results for the existence of energetic rate-independent systems, given stored energies with quadratic coercivity (see, e.g., \cite[Theorem 3.5.2]{Mielke.Roubicek.2015}).

\begin{theorem} \label{existence result of a quasistatic evolution}
Let $h > 0$ and $w \in W^{1,1} (0,T; H^1(\Omega;\R^3))$, $\ell \in W^{1,1}(0,T;H^1(\Omega;\R^3)')$.
Let $(u^h_0, e^h_0, p^h_0, \alpha^h_0) \in \calA_{h}^{\rm hard}(w(0))$ satisfy the global stability condition \ref{h-qs S}.
Then, there exists a unique $h$-quasistatic evolution $t \mapsto (u^h(t), e^h(t), p^h(t), \alpha^h(t))$ for the boundary datum $w(t)$ such that $u^h(0) = u_0$,\, $e^h(0) = e^h_0$,\, $p^h(0) = p^h_0$, and $\alpha^h(0) = \alpha^h_0$ and such that the functions $t \mapsto u^h(t)$, $t \mapsto e^h(t)$, $t \mapsto p^h(t)$, $t \mapsto \alpha^h(t)$ {\PINK belong to the space $W^{1,1}$ (and thus have an absolutely continuous representative)\BLACK} from $[0, T]$ into $H^1(\Omega;\R^3)$, $L^2(\Omega;\M^{3 \times 3}_{\sym})$, $L^2(\Omega;\M^{3 \times 3}_{{\RED\sym\BLACK}})$, and $L^2(\Omega)$, respectively.
\end{theorem}
\begin{remark} \label{remark stable states} 
{\PINK
We note that, for every $h > 0$, the set of admissible initial data in \Cref{existence result of a quasistatic evolution} that satisfy the global stability condition \ref{h-qs S} is non-empty.

Indeed, let $(u^h_{\rm min}, e^h_{\rm min}, p^h_{\rm min}, \alpha^h_{\rm min}) \in \calA_{h}^{\rm hard}(\tilde{w})$ be the solution of the problem 
\[
	\min_{(\upsilon, \eta, \pi, \beta) \in \calA_{h}^{\rm hard}(\tilde{w})}
	\left\{ 
	\calQ^{\rm el}_{h}(\eta) + \calQ^{\rm hard}_{h}(\pi, \beta) + \calR_{h}(\pi - \tilde{p}, \beta - \tilde{\alpha}) - \big\langle \tilde{\ell}, \upsilon  \big\rangle 
	\right\}.
\]
for some $\tilde{w} \in H^1(\Omega;\R^3)$, $\tilde{\ell} \in (H^1(\Omega;\R^3))'$, and $\tilde{p} \in L^2(\Omega;\M^{3 \times 3}_{\dev})$, $\tilde{\alpha} \in L^2(\Omega)$ for which there exists a pair $(\tilde{u}, \tilde{e})$ such that $(\tilde{u}, \tilde{e}, \tilde{p}, \tilde{\alpha}) \in \calA_{h}^{\rm hard}(\tilde{w})$ (e.g., we can take $(\tilde{u}, \tilde{e}, \tilde{p}, \tilde{\alpha}) = (\tilde{w}, {\E}\tilde{w}, 0,0)$).  
The existence and uniqueness of such a minimizer is a consequence of the coercivity of $\calQ^{\rm el}_{h}$ and $\calQ^{\rm hard}_{h}$, the convexity of $\calR_{h}$ over the convex set $\calA_{h}^{\rm hard}(\tilde{w})$, as well as fact that the minimizing functional attains a finite value in $(\tilde{u}, \tilde{e}, \tilde{p}, \tilde{\alpha})$.

Then, by using the triangle inequality \eqref{distriangle}, we deduce that the tuple $(u^h_{\rm min}, e^h_{\rm min}, p^h_{\rm min}, \alpha^h_{\rm min})$ is also a solution of the problem
\[
	\min_{(\upsilon, \eta, \pi, \beta) \in \calA_{h}^{\rm hard}(\tilde{w})}
	\left\{ 
	\calQ^{\rm el}_{h}(\eta) + \calQ^{\rm hard}_{h}(\pi, \beta) + \calR_{h}(\pi - p^h_{\rm min}, \beta - \alpha^h_{\rm min}) - \big\langle \tilde{\ell}, \upsilon  \big\rangle 
	\right\}.
\]
This guarantees that the solution of the second problem exists.
Taking, specifically, $\tilde{w} = w(0)$ and $\tilde{\ell} = \ell(0)$ gives a solution which by definition precisely satisfies the global stability condition \ref{h-qs S}.
\BLACK}
\end{remark}

\begin{remark} 
It is shown in \cite[Section 3.5]{Mielke.Roubicek.2015} that under the assumption of \Cref{existence result of a quasistatic evolution} the energetic formulation given in \Cref{h-quasistatic evolution} (when moved back to the physical domain $\Omega^h$) is equivalent to the problem \eqref{system of elastoplastic equations on the thin plate} for every $t \in [0,T]$ (the second equation is satisfied for a.e. $t \in [0,T]$).
As already mentioned{\RED,\BLACK} one of the advantages of the energetic formulation is that it {\RED does not\BLACK} contain any time derivative of the unknowns. 
\end{remark}

\begin{remark} \label{improved convergence 1}
Although \cite[Theorem 3.5.2]{Mielke.Roubicek.2015} {\RED says\BLACK} that there exists a unique absolutely continuous solution of quasistatic evolution given by \Cref{h-quasistatic evolution}, it can be shown, by using the proof of \cite[Theorem 5.2]{DalMaso.DeSimone.Mora.2006} and \Cref{stability condition equivalence}, that every solution of the problem given in \Cref{h-quasistatic evolution}, under the assumption of \Cref{existence result of a quasistatic evolution}, is absolutely continuous. Namely, the proof of \cite[Theorem 5.2]{DalMaso.DeSimone.Mora.2006} relies on energy equality which is the analogue of \ref{h-qs E} and the analogue of \Cref{stability condition equivalence}. 
\end{remark}

\begin{corollary} \label{nakcor}
There exists $C>0$, independent of $h$, such that the solution of h-quasistatic evolution given by \Cref{existence result of a quasistatic evolution} satisfies for a.e. $t \in [0,T]$
	\begin{align} \nonumber
	\|\dot{u}^h(t)\|_{H^1(\Omega;\R^3)} +\|\Lambda_h \dot{e}^h(t)\|_{L^2(\Omega;\M^{3 \times 3}_{\sym})} +\|\Lambda_h \dot{p}^h\|_{L^2(\Omega;\M^{3 \times 3}_{\dev})}+\|\dot{\alpha}^h(t)\|_{L^2(\Omega)} \\ \label{ocjena1} \leq C(\|\dot{w}(t)\|_{H^1(\Omega;\R^3)}+\|\dot{f}(t) \|_{L^2(\Omega;\R^3)}+\|\dot{g}(t)\|_{L^2(\partial \Omega \setminus \Gamma_D;\R^3)}).
	\end{align}
\end{corollary}
\begin{proof} 
See the proof of \cite[Theorem 3.5.2]{Mielke.Roubicek.2015}).
\end{proof}

We prove the following lemma, which gives a condition that is equivalent to global stability \ref{h-qs S}.

\begin{lemma} \label{stability condition equivalence}
Let $w \in H^1(\Omega;\R)$.
The tuple $(u^h, e^h, p^h, \alpha^h) \in \calA_{h}^{\rm hard}(w)$ is a solution of the minimization problem
\begin{equation} \label{minimum problem h}
	\min_{(\upsilon, \eta, \pi, \beta) \in \calA_{h}^{\rm hard}(w)}
	\left\{ 
	\calQ^{\rm el}_{h}(\eta) + \calQ^{\rm hard}_{h}(\pi, \beta) + \calR_{h}(\pi - p^h, \beta - \alpha^h) 
	- \big\langle \ell(t), \upsilon - u^h \big\rangle 
	\right\}
\end{equation}
if and only if
\begin{equation} \label{minimality condition h}
	- \calR_{h}(\pi, \beta) \leq 
	\int_{\Omega} \left( \C\!\left(x'\right) \Lambda_h e^h : \Lambda_h \eta + \Hkin\!\left(x'\right) \Lambda_h p^h : \Lambda_h \pi + \Hiso\!\left(x'\right) \alpha^h \cdot \beta \right) \,dx - \big\langle \ell(t), \upsilon \big\rangle
\end{equation}
for every $(\upsilon, \eta, \pi, \beta) \in \calA_{h}^{\rm hard}(0)$.
\end{lemma}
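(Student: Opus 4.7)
The minimization problem \eqref{minimum problem h} is convex: the admissible set $\calA_h^{\rm hard}(w)$ is convex (note the constraint $\disspot(x',\Lambda_h p^h(x))\le\alpha^h(x)$ is convex in $(p^h,\alpha^h)$ by convexity of $\disspot$ in its second slot), the quadratic forms $\calQ^{\rm el}_h,\,\calQ^{\rm hard}_h$ are convex, and $\calR_h$ is convex as the integral of the convex positively $1$-homogeneous function $\gendisspot(x',\cdot,\cdot)$. The strategy is the standard first-order optimality argument for such a convex program: interior perturbations yield the variational inequality, and convex expansion yields the converse. The key observation enabling the use of the cone $\calA_h^{\rm hard}(0)$ is that, for $(\upsilon,\eta,\pi,\beta)\in\calA_h^{\rm hard}(0)$, the perturbed tuple $(u^h+s\upsilon,\, e^h+s\eta,\, p^h+s\pi,\, \alpha^h+s\beta)$ with $s\ge 0$ lies in $\calA_h^{\rm hard}(w)$: the boundary, strain decomposition and trace conditions are linear, while the dissipation constraint is preserved thanks to subadditivity of $\disspot(x',\cdot)$ (a consequence of convexity and positive $1$-homogeneity), giving $\disspot(x',\Lambda_h(p^h+s\pi))\le\disspot(x',\Lambda_h p^h)+s\disspot(x',\Lambda_h\pi)\le\alpha^h+s\beta$.

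\textbf{Forward direction.} Assuming $(u^h,e^h,p^h,\alpha^h)$ solves \eqref{minimum problem h}, I would insert the perturbed tuple described above as a competitor, expand $\calQ^{\rm el}_h(e^h+s\eta)=\calQ^{\rm el}_h(e^h)+s\int_\Omega \C(x')\Lambda_h e^h:\Lambda_h\eta\,dx + s^2\calQ^{\rm el}_h(\eta)$ (and analogously for $\calQ^{\rm hard}_h$), and use positive $1$-homogeneity to write $\calR_h(s\pi,s\beta)=s\,\calR_h(\pi,\beta)$. Subtracting the value at the minimizer, dividing by $s>0$, and letting $s\to 0^+$ discards the $O(s^2)$ quadratic remainders and yields \eqref{minimality condition h} after rearranging.

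\textbf{Reverse direction.} Assuming \eqref{minimality condition h}, take any $(\upsilon',\eta',\pi',\beta')\in\calA_h^{\rm hard}(w)$ and set $(\upsilon,\eta,\pi,\beta):=(\upsilon'-u^h,\eta'-e^h,\pi'-p^h,\beta'-\alpha^h)$. If $\calR_h(\pi,\beta)=+\infty$, the inequality in \eqref{minimum problem h} is trivial; otherwise finiteness of $\calR_h$ forces $\disspot(x',\Lambda_h\pi)\le\beta$ a.e., so together with the linear constraints the tuple $(\upsilon,\eta,\pi,\beta)$ lies in $\calA_h^{\rm hard}(0)$. Applying \eqref{minimality condition h} to this tuple and then adding the nonnegative quantity $\calQ^{\rm el}_h(\eta)+\calQ^{\rm hard}_h(\pi,\beta)\ge 0$, the bilinear expansion of the quadratic forms reassembles into $\calQ^{\rm el}_h(\eta')-\calQ^{\rm el}_h(e^h)$ and $\calQ^{\rm hard}_h(\pi',\beta')-\calQ^{\rm hard}_h(p^h,\alpha^h)$, producing the desired minimality inequality.

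\textbf{Main obstacle.} The only delicate point is the admissibility bookkeeping around the dissipation constraint, since $\calA_h^{\rm hard}(0)$ is \emph{not} simply the tangent cone obtained by differences of elements of $\calA_h^{\rm hard}(w)$: such differences need not satisfy $\disspot(x',\Lambda_h\pi)\le\beta$ pointwise. This is precisely what forces the case distinction in the reverse direction, which is however painless because infinite values of $\calR_h$ render the minimality inequality trivial. Once this is recognized, the rest is routine convex calculus exploiting that $\gendisspot(x',\cdot,\cdot)$ is positively $1$-homogeneous and that $\calQ^{\rm el}_h,\calQ^{\rm hard}_h$ are nonnegative quadratic forms.
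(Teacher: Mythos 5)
Your proof is correct and follows essentially the same route as the paper: one-sided perturbations $(u^h+\epsilon\upsilon,\,e^h+\epsilon\eta,\,p^h+\epsilon\pi,\,\alpha^h+\epsilon\beta)$ combined with the positive $1$-homogeneity of $\calR_h$ for the forward direction, and testing with differences plus the quadratic expansion of $\calQ^{\rm el}_h,\calQ^{\rm hard}_h$ for the converse. Your explicit case distinction in the reverse direction is in fact slightly more careful than the paper, which simply asserts $(\upsilon-u^h,\eta-e^h,\pi-p^h,\beta-\alpha^h)\in\calA_{h}^{\rm hard}(0)$ — a membership that holds only when $\calR_h(\pi-p^h,\beta-\alpha^h)$ is finite, although, exactly as you note, the infinite case renders the minimality inequality trivial, so both arguments are sound.
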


\begin{proof}
Let $(u^h, e^h, p^h, \alpha^h) \in \calA_{h}^{\rm hard}(w)$ be a solution to \eqref{minimum problem h} and fix $(\upsilon, \eta, \pi, \beta) \in \calA_{h}^{\rm hard}(0)$. 
For every $\epsilon > 0$ the tuples $(u^h+\epsilon\upsilon, e^h+\epsilon\eta, p^h+\epsilon\pi, \alpha^h+\epsilon\beta)$ belong to $\calA_{h}^{\rm hard}(w)$, hence
\begin{equation*}
	\calQ^{\rm el}_{h}(e^h+\epsilon\eta) + \calQ^{\rm hard}_{h}(p^h+\epsilon\pi, \alpha^h+\epsilon\beta) + \calR_{h}(\epsilon\pi, \epsilon\beta) - \big\langle \ell(t), \epsilon\upsilon \big\rangle
	\geq \calQ^{\rm el}_{h}(e^h) + \calQ^{\rm hard}_{h}(p^h, \alpha^h).
\end{equation*}
Using the positive homogeneity of $\calR_{h}$ and linearity of $\ell$, we obtain
\begin{equation*}
	\calQ^{\rm el}_{h}(e^h+\epsilon\eta) + \calQ^{\rm hard}_{h}(p^h+\epsilon\pi, \alpha^h+\epsilon\beta) + \epsilon\calR_{h}(\pi, \beta) - \epsilon\big\langle \ell(t), \upsilon \big\rangle
	\geq \calQ^{\rm el}_{h}(e^h) + \calQ^{\rm hard}_{h}(p^h, \alpha^h).
\end{equation*}
A simple calculation then gives
\begin{align*}
	& \epsilon\int_{\Omega} \left( \C\!\left(x'\right) \Lambda_h e^h : \Lambda_h \eta + \Hkin\!\left(x'\right) \Lambda_h p^h : \Lambda_h \pi + \Hiso\!\left(x'\right) \alpha^h \cdot \beta \right) \,dx
	\\& + \epsilon^2\calQ^{\rm el}_{h}(\eta) + \epsilon^2\calQ^{\rm hard}_{h}(\pi, \beta) + \epsilon\calR_{h}(\pi, \beta) - \epsilon\big\langle \ell(t), \upsilon \big\rangle 
	\geq 0.
\end{align*}
Dividing by $\epsilon$ and sending $\epsilon \to 0$ yields \eqref{minimality condition h}. 

For the converse implication, let $(u^h, e^h, p^h, \alpha^h) \in \calA_{h}^{\rm hard}(w)$ satisfy \eqref{minimality condition h}.
Then for arbitrary tuple $(\upsilon, \eta, \pi, \beta) \in \calA_{h}^{\rm hard}(w)$ we have
\begin{equation*}
	(\upsilon-u^h, \eta-e^h, \pi-p^h, \beta-\alpha^h) \in \calA_{h}^{\rm hard}(0).
\end{equation*}
Thus
{\allowdisplaybreaks
\begin{align*}
	& \calR_{h}(\pi-p^h, \beta-\alpha^h) 
	\\*& \geq - \int_{\Omega} \left( \C\!\left(x'\right) \Lambda_h e^h : \Lambda_h (\eta-e^h) + \Hkin\!\left(x'\right) \Lambda_h p^h : \Lambda_h (\pi-p^h) + \Hiso\!\left(x'\right) \alpha^h \cdot (\beta-\alpha^h) \right) \,dx
	\\*& \hspace{1em} + \big\langle \ell(t), \upsilon-u^h \big\rangle\\
	& = \calQ^{\rm el}_{h}(e^h) + \calQ^{\rm el}_{h}(\eta-e^h) - \calQ^{\rm el}_{h}(\eta) + \calQ^{\rm hard}_{h}(p^h, \alpha^h) + \calQ^{\rm hard}_{h}(\pi-p^h, \beta-\alpha^h) - \calQ^{\rm hard}_{h}(\pi, \beta)
	\\*& \hspace{1em} + \big\langle \ell(t), \upsilon-u^h \big\rangle,
\end{align*}
}\noindent
where the last equality is a straightforward computation. 
From the above, we immediately deduce
\begin{align*}
	& \calQ^{\rm el}_{h}(\eta) + \calQ^{\rm hard}_{h}(\pi, \beta) + \calR_{h}(\pi-p^h, \beta-\alpha^h) - \big\langle \ell(t), \upsilon-u^h \big\rangle
	\\& \geq \calQ^{\rm el}_{h}(e^h) + \calQ^{\rm el}_{h}(\eta-e^h) + \calQ^{\rm hard}_{h}(p^h, \alpha^h) + \calQ^{\rm hard}_{h}(\pi-p^h, \beta-\alpha^h) 
	\geq \calQ^{\rm el}_{h}(e^h) + \calQ^{\rm hard}_{h}(p^h, \alpha^h),
\end{align*}
hence $(u^h, e^h, p^h, \alpha^h) \in \calA_{h}^{\rm hard}(w)$ is a solution to \eqref{minimum problem h}.
\end{proof}

\subsection{\texorpdfstring{$\Gamma$}{Γ}-convergence results} \label{section gamma converegence results} 

To characterize the limit problem we will need to introduce the Kirchhoff--Love displacements and moments as well as the reduced elasticity tensor. In this section we will also, following the standard $\Gamma$--convergence approach, prove the compactness statement, required lower semicontinuity result and we will make the construction of the recovery sequence. These results will be used in \Cref{seclimquasiev} to prove the main convergence theorem. 

\medskip
\noindent{\bf Kirchhoff--Love displacements and moments.} \nopagebreak

We consider the set of {\em Kirchhoff--Love displacements}, defined as
\begin{equation*}
	BD_{KL}(\Omega) := \big\{u \in BD(\Omega) : \ ({\E}u)_{i3}=0 \quad\text{for } i=1,2,3\big\}.
\end{equation*}
We note that $u \in BD_{KL}(\Omega)$ if and only if $u_3 \in BH(\omega)$ and there exists ${\PINK\bar{u}\BLACK} \in BD(\omega)$ such that 
\begin{equation*}
	u_{\alpha}={\PINK\bar{u}\BLACK}_{\alpha}-x_3\partial_{x_\alpha}u_3, \quad \alpha=1,2.
\end{equation*}
Namely, if $u \in BD_{KL}(\Omega)$, then 
\begin{equation*} \label{symmetric gradient of KL functions}
	{\E}u = \begin{pmatrix} \begin{matrix} {\E}_{x'}{\PINK\bar{u}\BLACK} - x_3 {\D}^2_{x'}u_3 \end{matrix} & \begin{matrix} 0 \\ 0 \end{matrix} \\ \begin{matrix} 0 & 0 \end{matrix} & 0 \end{pmatrix}.
\end{equation*}
Here for ${\E}_{x'}$ denotes the symmetrized gradient of a function in $BD(\omega)$. 
We call ${\PINK\bar{u}\BLACK}, u_3$ the {\em Kirchhoff--Love components} of $u$. 
Furthermore, we will consider the space
\begin{equation*}
	H^1_{KL}(\Omega) := BD_{KL}(\Omega) \cap H^1(\Omega;\R^3).
\end{equation*}
Note that if $u \in H^1_{KL}(\Omega)$, then $\bar{u} \in H^1(\omega;\R^2)$, $u_3 \in H^2(\omega)$.

Since the space
\begin{equation*}
	\big\{\xi\in \M^{3 \times 3}_{\sym}: \ \xi_{i3}=0 \text{ for }i=1,2,3\big\}
\end{equation*}
is canonically isomorphic to $\M^{2 \times 2}_{\sym}$, given a function $u \in H^1_{KL}(\Omega)$ we will usually identify ${\E}u$ with a function in $L^2(\Omega;\M^{2 \times 2}_{\sym})$. 

We will need the following definitions. 

\begin{definition} \label{moments of functions}
	For $f \in L^2(\Omega;\M^{2 \times 2}_{\sym})$ we denote by $\zerothI{f}$, $\firstI{f} \in L^2(\omega;\M^{2 \times 2}_{\sym})$ and $f^\perp \in L^2(\Omega;\M^{2 \times 2}_{\sym})$ the following components of $f$:
	\begin{equation*}
		\zerothI{f}(x')
		:= \int_{I} f(x',x_3)\, dx_3, \qquad
		\firstI{f}(x')
		:= 12 \int_{I} x_3 f(x',x_3)\,dx_3
	\end{equation*}
	for $x' \in \omega$, and
	\begin{equation*}
		f^\perp(x) := f(x) 
		- \zerothI{f}(x') - x_3 \firstI{f}(x')
	\end{equation*}
	for $x \in \Omega$.
	We name $\zerothI{f}$ the \emph{zeroth order moment} of $f$ and $\firstI{f}$ the \emph{first order moment} of $f$.
\end{definition}
The coefficient in the definition of $\firstI{f}$ is chosen from the computation $\int_{I} x_3^2 \,dx_3 = \frac{1}{12}$. 
It ensures that if $f$ is of the form $f(x) = x_3 g(x')$, for some $g \in L^2(\omega;\M^{2 \times 2}_{\sym})$, then $\firstI{f} = g$.


Analogously, we have the following definition of zeroth and first order moments of measures. 

\begin{definition} \label{moments of measures}
	For $\mu \in M_b(\Omega \cup \Gamma_\Dir;\M^{2 \times 2}_{\sym})$ we define ${\PINK\zerothI{\mu}\BLACK}$, ${\PINK\firstI{\mu}\BLACK} \in M_b(\omega \cup \gamma_\Dir;\M^{2 \times 2}_{\sym})$ and $\mu^\perp \in M_b(\Omega \cup \Gamma_\Dir;\M^{2 \times 2}_{\sym})$ as follows:
	\begin{equation*}
		\int_{\omega \cup \gamma_\Dir} \varphi: d{\PINK\zerothI{\mu}\BLACK} := \int_{\Omega \cup \Gamma_\Dir} \varphi: d\mu, \qquad
		\int_{\omega \cup \gamma_\Dir} \varphi: d{\PINK\firstI{\mu}\BLACK} := 12\int_{\Omega \cup \Gamma_\Dir} x_3 \varphi: d\mu 
	\end{equation*}
	for every $\varphi \in C(\omega \cup \gamma_\Dir;\M^{2 \times 2}_{\sym})$, and
	\begin{equation*}
		\mu^\perp := \mu - {\PINK\zerothI{\mu}\BLACK} \otimes \calL^{1}_{x_3} - {\PINK\firstI{\mu}\BLACK} \otimes x_3 \calL^{1}_{x_3},
	\end{equation*}
	where $\otimes$ is the usual product of measures, and $\calL^{1}_{x_3}$ is the Lebesgue measure restricted to the third component of $\R^3$. 
	We call ${\PINK\zerothI{\mu}\BLACK}$ the \emph{zeroth order moment} of $\mu${\RED,\BLACK} and ${\PINK\firstI{\mu}\BLACK}$ the \emph{first order moment} of $\mu$.
\end{definition}

\noindent{\bf The reduced elasticity tensor.}

For a fixed $x' \in \omega$, let $\M_{x'} : \M^{2 \times 2}_{\sym} \to \M^{3 \times 3}_{\sym}$ be the operator given by
\[
\M_{x'} \xi
:=
\begin{pmatrix} \begin{matrix} \xi \end{matrix} & \begin{matrix} \lambda^{x'}_1(\xi) \\ \lambda^{x'}_2(\xi) \end{matrix} \\ \begin{matrix} \lambda^{x'}_1(\xi) & \lambda^{x'}_2(\xi) \end{matrix} & \lambda^{x'}_3(\xi) \end{pmatrix}
\quad \text{for every }\xi \in \M^{2 \times 2}_{\sym},
\]
where for every $\xi \in \M^{2 \times 2}_{\sym}$ the tuple $(\lambda^{x'}_1(\xi),\lambda^{x'}_2(\xi),\lambda^{x'}_3(\xi))$ is the unique solution to the minimum problem
\begin{equation*}
	\min_{\lambda^{x'}_i \in \R} \C(x') \begin{pmatrix} \begin{matrix} \xi \end{matrix} & \begin{matrix} \lambda^{x'}_1 \\ \lambda^{x'}_2 \end{matrix} \\ \begin{matrix} \lambda^{x'}_1 & \lambda^{x'}_2 \end{matrix} & \lambda^{x'}_3 \end{pmatrix} : \begin{pmatrix} \begin{matrix} \xi \end{matrix} & \begin{matrix} \lambda^{x'}_1 \\ \lambda^{x'}_2 \end{matrix} \\ \begin{matrix} \lambda^{x'}_1 & \lambda^{x'}_2 \end{matrix} & \lambda^{x'}_3 \end{pmatrix}.
\end{equation*}
We observe that for every $\xi \in \M^{2 \times 2}_{\sym}$, the matrix $\M_{x'}\xi$ is given by the unique solution of the linear system
\begin{equation} \label{C tensor KKT condition}
	\C(x') \M_{x'} \xi : \begin{pmatrix} 0 & 0 & \lambda_1 \\ 0 & 0 & \lambda_2 \\ \lambda_1 & \lambda_2 & \lambda_3 \end{pmatrix} = 0 \quad \text{for every } \lambda_1, \lambda_2, \lambda_3 \in \R.
\end{equation}
This implies, in particular, for every $x' \in \omega$ that $\M_{x'}$ is a linear map and
$
(\C(x') \M_{x'} \xi)_{i3} = (\C(x') \M_{x'} \xi)_{3i} = 0.
$

We then define the \emph{reduced elasticity tensor} $\red{\C} : \omega \times \M^{2 \times 2}_{\sym} \to \M^{3 \times 3}_{\sym}$, given by
\[
\red{\C}(x') \xi := \C(x') \M_{x'} \xi \quad \text{for every } \xi \in \M^{2 \times 2}_{\sym}.
\]
We remark that by \eqref{C tensor KKT condition} we have
\begin{equation} \label{Cred 2x2 : 3x3}
	\red{\C}(x') \xi : \zeta = \C(x') \M_{x'} \xi : \begin{pmatrix} \zeta^{1,2} & 0 \\ 0 & 0 \end{pmatrix} = \C(x') \M_{x'} \xi : \M_{x'} \zeta^{1,2} \quad\text{for every } \xi\in \M^{2 \times 2}_{\sym},\,\zeta \in \M^{3 \times 3}_{\sym},
\end{equation}
and 
$
(\red{\C}(x') \xi)_{i3} = (\red{\C}(x') \xi)_{3i} = 0.
$
Consequently, the reduced elasticity tensor can be considered as a map from $\omega$ taking values in the set of $2 \times 2$ symmetric positive definite linear operators, i.e. by a slight abuse of notation we will, when needed, assume that $\red{\C}(x'): \M^{2 \times 2}_{\sym} \to \M^{2 \times 2}_{\sym}$.


\medskip
\noindent{\bf Assumptions on the loads and boundary datum.}

We will assume that the function $w$ whose trace on $\Gamma_D$ defines the boundary data is such that 
\begin{equation} \label{assbd} 
	w \in W^{1,1} (0,T;H^1_{KL}(\Omega)). 
\end{equation}
In particular, it is of the form 
\[
	w(x):=(\bar{w}_1(x')-{x_3}\partial_1 w_3(x'), \bar{w}_2(x')-x_3\partial_2 w_3(x'), w_3(x'))\quad\text{for a.e.\ }x \in \Omega, 
\]
where ${\PINK\bar{w}\BLACK}_\alpha \in W^{1,1}(0,T;H^1(\omega))$, $\alpha=1,2$, and ${\PINK\bar{w}\BLACK}_3 \in W^{1,1}(0,T;H^2(\omega))$.

We also assume that 
\begin{equation} \label{assloads} 
	\fvol \in W^{1,1}(0,T;L^2(\Omega;\R^3)), \quad \fsurf \in W^{1,1}(0,T;L^2(\partial \Omega \setminus \Gamma_D;\R^3)). 
\end{equation}

\subsubsection{Compactness result}
We first recall a compactness result for sequences of non-oscillating fields with uniformly bounded scaled symmetric gradients.

\begin{proposition} \label{two-scale weak limit of scaled strains - 2x2 submatrix}
Let $\{u^h\}_{h>0} \subset H^1(\Omega)$ be a sequence such that there exists a constant $C>0$ for which 
\begin{equation*}
	\|u^h\|_{L^2(\Omega;\R^3)}+\|\Lambda_h {\E}u^h\|_{L^2(\Omega;\M^{3 \times 3}_{\sym})} \leq C.
\end{equation*}
Then, there exist functions $\bar{u} = (\bar{u}_1, \bar{u}_2) \in H^1(\omega;\R^2)$ and $u_3 \in H^2(\omega)$ such that, up to subsequences, it holds 
\begin{align*}
	u^h_{\alpha} & \strong \bar{u}_{\alpha}-x_3 \partial_{x_\alpha}u_3, \quad \text{strongly in }L^2(\Omega), \quad \alpha \in \{1, 2\},\\
	u^h_3 & \strong u_3, \quad \text{strongly in }L^2(\Omega),\\
	{\E}u^h & \weak \begin{pmatrix} {\E}_{x'}\bar{u} - x_3 {\D}^2_{x'}u_3 & 0 \\ 0 & 0 \end{pmatrix} \quad \text{weakly in }L^2(\Omega;\M^{3 \times 3}_{\sym}).
\end{align*}
\end{proposition}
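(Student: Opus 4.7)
The structure of $\Lambda_h$ encodes that the hypothesis forces the ``out-of-plane'' components of $\E u^h$ to vanish in the limit: indeed the bound $\|\Lambda_h \E u^h\|_{L^2} \leq C$ yields
\begin{equation*}
    \|(\E u^h)_{\alpha 3}\|_{L^2(\Omega)} \leq Ch, \qquad \|(\E u^h)_{33}\|_{L^2(\Omega)} \leq Ch^2
\end{equation*}
for $\alpha \in \{1,2\}$, while $\E u^h$ itself stays bounded in $L^2(\Omega;\M^{3\times 3}_{\sym})$. My plan is to combine this with the second Korn inequality on the Lipschitz domain $\Omega$ to promote the $L^2$-bounds on $u^h$ and $\E u^h$ to an $H^1$-bound $\|u^h\|_{H^1(\Omega;\R^3)} \leq C$, then extract a (not relabeled) subsequence so that $u^h \weak u$ weakly in $H^1(\Omega;\R^3)$ for some $u\in H^1(\Omega;\R^3)$.

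Next I would identify $u$ as a Kirchhoff--Love displacement. Passing to the weak $L^2$-limit in $\E u^h$, one has $\E u^h \weak \E u$; but the estimates above force $(\E u)_{i3} = 0$ for $i = 1,2,3$, since $(\E u^h)_{\alpha 3} \strong 0$ and $(\E u^h)_{33}\strong 0$ strongly in $L^2$. Hence $u \in KL(\Omega)$, and by the characterization of $KL(\Omega)$ recalled in \eqref{ualfa}--\eqref{symmetric gradient of KL functions} there exist $\bar{u} \in H^1(\omega;\R^2)$ and $u_3 \in H^2(\omega)$ such that
\begin{equation*}
    u_\alpha = \bar{u}_\alpha - x_3 \partial_{x_\alpha} u_3, \qquad \alpha = 1,2,
\end{equation*}
and $\E u$ has the asserted block form $\begin{pmatrix} \E_{x'} \bar{u} - x_3 \nabla^2_{x'} u_3 & 0 \\ 0 & 0 \end{pmatrix}$. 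This yields the weak $L^2$-convergence of $\E u^h$ to the stated limit.

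Finally, the strong $L^2$-convergences of the components follow from the Rellich--Kondrachov embedding $H^1(\Omega) \hookrightarrow \hookrightarrow L^2(\Omega)$: from $u^h \weak u$ in $H^1$ we get $u^h \strong u$ strongly in $L^2(\Omega;\R^3)$, and the stated componentwise convergences are simply the reading of this strong convergence through \eqref{ualfa}.

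The only mildly delicate step is the first one: the constant in the $L^2$-version of Korn's inequality is $h$-independent because it is applied on the fixed domain $\Omega$ (not on $\Omega^h$), and we use it on $u^h$ itself, not on its scaled version, exploiting that $\E u^h$ is bounded in $L^2(\Omega;\M^{3\times 3}_{\sym})$ uniformly in $h$. Once this $H^1$-bound is in hand, the rest of the argument is standard and follows the classical dimension-reduction template.
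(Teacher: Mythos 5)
Your proof is correct and follows essentially the same route as the paper: Korn's inequality on the fixed domain $\Omega$ to get a uniform $H^1$ bound, extraction of a weak limit, identification of the vanishing components $(\E u)_{i3}=0$ from the scaling in $\Lambda_h$, and the Kirchhoff--Love characterization plus compact embedding to conclude. The only cosmetic difference is that you invoke the stated $KL(\Omega)$ characterization directly, whereas the paper re-derives the regularity of $\bar{u}$ and $u_3$ step by step; this is not a gap.
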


\begin{proof}
See the proof of \cite[Theorem 1.4.-1]{ciarlet1997mathematical}.
\end{proof} 

\subsubsection{Lower semicontinuity of energy functionals}
Motivated by the result in the previous section, we introduce the limiting class
\begin{align*}
	\calA_{0}^{\rm hard}(w) := \Big\{ 
	& (u, e, p, \alpha) \in H^1_{KL}(\Omega) \times L^2(\Omega;\M^{2 \times 2}_{\sym}) \times L^2(\Omega;\M^{3 \times 3}_{\dev}) \times L^2(\Omega) :
	\\& {\E}_{x'}\bar{u} - x_3 {\D}^2_{x'}u_3 = e + p^{1,2} \quad \text{ in } \Omega,
	\\& u = w \quad {\PINK\text{ on } \Gamma_\Dir\BLACK},
	\\& \disspot(x', p(x)) \leq \alpha(x) \quad \text{a.e.} \text{ in } \Omega 
	\Big\},
\end{align*}
where $\bar{u} \in H^1(\omega;\R^2)$ and $u_3 \in H^2(\omega)$ are the Kirchhoff--Love components of $u$.

For $(u, e, p, \alpha) \in \calA_{0}^{\rm hard}(w)$ we now define
\begin{align}
	\calQ^{\rm el}_{0}(e) & := \frac{1}{2} \int_{\Omega} \red{\C}(x') e(x) : e(x) \,dx, \label{definition Q^el_0}\\
	\calQ^{\rm hard}_{0}(p, \alpha) & := \frac{1}{2} \int_{\Omega} \Hkin(x') p(x) : p(x) \,dx + \frac{1}{2} \int_{\Omega} \Hiso(x') \alpha(x) \cdot \alpha(x) \,dx, \label{definition Q^hard_0}
\end{align}
and
\begin{equation} \label{definition R_0}
	\calR_{0}(p, \alpha) := \int_{\Omega} \gendisspot(x', p(x), \alpha(x)) \,dx.
\end{equation}
We again note that the condition $\disspot(x', p(x)) \leq \alpha(x)$ a.e. in $\Omega$ implies that $\calR_{0}(p, \alpha) = \int_{\Omega} \disspot(x', p(x)) \,dx$.
The following theorem gives us lower semicontinuity result. 
\begin{theorem} \label{lower semicontinuity of energies - h to 0}
Let $w \in H^1_{KL}(\Omega)$ and let $(u^h, e^h, p^h, \alpha^h) \in \calA_{h}^{\rm hard}( w)$ be such that
\begin{align}
	\label{lower semicontinuity convergence to u} & u^h \weak u \quad \text{weakly in $H^1(\Omega;\R^3)$},\\
	\label{lower semicontinuity convergence to e} & \Lambda_h e^h \weak \M_{x'} e \quad \text{weakly in $L^2(\Omega;\M^{3 \times 3}_{\sym})$},\\
	\label{lower semicontinuity convergence to p}& \Lambda_h p^h \weak p \quad \text{weakly in $L^2(\Omega;\M^{3 \times 3}_{\dev})$},\\
	\label{lower semicontinuity convergence to alpha}& \alpha^h \weak \alpha \quad \text{weakly in $L^2(\Omega)$},
\end{align}
for some $(u, e, p{\RED,\BLACK} \alpha)\in H^1(\Omega;\R^3) \times L^2(\Omega;\M^{2 \times 2}_{\sym}) \times L^2(\Omega;\M^{3 \times 3}_{\dev}) \times L^2(\Omega) $. 
Then $(u, e, p, \alpha) \in \calA_{0}^{\rm hard}(w)$ and we have
\begin{equation} \label{lower semicontinuity Eh}
	\calQ^{\rm el}_{0}(e) + \calQ^{\rm hard}_{0}(p, \alpha) - \big\langle \ell(t), u \big\rangle 
	\leq \liminf\limits_{h} \left( \calQ^{\rm el}_{h}(e^h) + \calQ^{\rm hard}_{h}(p^h, \alpha^h) - \big\langle \ell(t), u^h \big\rangle \right)
\end{equation}
and
\begin{equation} \label{lower semicontinuity Rh}
	\calR_{0}(p, \alpha) \leq \liminf\limits_{h} \calR_{h}(p^h, \alpha^h).
\end{equation}
\end{theorem}

\begin{proof}
The fact that $u \in H_{KL}^1(\Omega)$ follows from \Cref{two-scale weak limit of scaled strains - 2x2 submatrix}.
Immediately from the convexity of the quadratic forms we conclude
\begin{equation} \label{doleprva} 
	\calQ^{\rm hard}_{0}(p, \alpha) \leq \liminf\limits_{h} \calQ^{\rm hard}_{h}(p^h, \alpha^h). 
\end{equation}
Since $\C(x') \xi:\xi \geq \red{\C}(x') \xi:\xi$ for every $x' \in \omega$, $\xi \in \M^{3 \times 3}_{\sym})$ and from the convexity of the quadratic forms we conclude that 
\begin{equation} \label{doledruga}
	\calQ^{\rm el}_{0}(e) \leq \liminf\limits_{h} \calQ^{\rm el}_{h}(e^h). 
\end{equation}
By using \eqref{lower semicontinuity convergence to u} and the compactness of the trace we conclude that for every $t \in [0,T]$ we have 
\begin{equation} \label{doletreca} 
\lim_{h} \left\langle \ell(t), u^h\right \rangle = \left\langle \ell(t), u\right \rangle. 
\end{equation}
From \eqref{doleprva}, \eqref{doledruga} and \eqref{doletreca} we conclude \eqref{lower semicontinuity Eh}, while \eqref{lower semicontinuity Rh} directly follows from the convexity and \eqref{lower semicontinuity convergence to p} and \eqref{lower semicontinuity convergence to alpha}.
The fact that $(u, e, p, \alpha) \in \calA_{0}^{\rm hard}(w)$ follows from the fact that $u \in H_{KL}^1(\Omega)$ and the convexity of the dissipation potential $\disspot$ and Mazur's theorem. 
\end{proof}

\subsubsection{Construction of the recovery sequence}
The following theorem provides us the recovery sequence. 
\begin{theorem} \label{recovery sequence}
Let $w \in H^1_{KL}(\Omega)$ and let $(u, e, p, \alpha) \in \calA_{0}^{\rm hard}(w)$.
Then, there exists a sequence of tuples $(u^h, e^h, p^h, \alpha^h) \in \calA_{h}^{\rm hard}(w)$ such that
\begin{align}
	& u^h \weak u \quad \text{weakly in $H^1(\Omega;\R^3)$}, \label{recovery sequence convergence u}\\
	& \Lambda_h e^h \strong \M_{x'} e \quad \text{strongly in $L^2(\Omega;\M^{3 \times 3}_{\sym})$}, \label{recovery sequence convergence e}\\
	& \Lambda_h p^h \strong p \quad \text{strongly in $L^2(\Omega;\M^{3 \times 3}_{\dev})$}, \label{recovery sequence convergence p}\\
	& \alpha^h \strong \alpha \quad \text{strongly in $L^2(\Omega)$}, \label{recovery sequence convergence alpha}\\
	& \calR_{h}(p^h, \alpha^h) \to \calR_{0}(p, \alpha). \label{recovery sequence convergence dissipation}
\end{align}
\end{theorem}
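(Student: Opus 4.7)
The plan is to keep the plastic variable and hardening parameter essentially unchanged by suitable rescaling, and to push all necessary corrections into the displacement. On the plastic side, define $p^h := \Lambda_h^{-1} p$ componentwise, that is, $(p^h)_{\alpha\beta} := p_{\alpha\beta}$, $(p^h)_{\alpha 3} := h\, p_{\alpha 3}$, $(p^h)_{33} := h^2 p_{33}$ for $\alpha, \beta \in \{1,2\}$, and set $\alpha^h := \alpha$. Then $\Lambda_h p^h = p$ identically, so \eqref{recovery sequence convergence p} and \eqref{recovery sequence convergence alpha} hold trivially; because $p$ is deviatoric, the trace-free condition $(p^h)_{11} + (p^h)_{22} + \frac{1}{h^2}(p^h)_{33} = 0$ is automatic, and $\disspot(x', \Lambda_h p^h) = \disspot(x', p) \leq \alpha = \alpha^h$ follows from the admissibility of the limit tuple. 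This choice also gives \eqref{recovery sequence convergence dissipation} as an exact equality for every $h$: $\calR_h(p^h, \alpha^h) = \int_\Omega \disspot(x', p)\,dx = \calR_0(p, \alpha)$.

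The main work is the construction of $u^h \in H^1(\Omega;\R^3)$ with $u^h = w$ on $\Gamma_\Dir$ such that, setting $e^h := \E u^h - p^h$, one has $\Lambda_h e^h \to e$ strongly in $L^2$. Since $u \in KL(\Omega)$, the matrix $\Lambda_h \E u$ equals $\E u$ and has its $2\times 2$ block equal to $e^{1,2} + p^{1,2}$ and all other entries zero, so the missing out-of-plate entries of $e + p$ must be produced by a Kirchhoff--Love-type corrector. Formally, I would take $\phi^h_\alpha(x) := 2h\int_0^{x_3}(e_{\alpha 3} + p_{\alpha 3})(x',s)\,ds$ for $\alpha = 1, 2$ and $\phi^h_3(x) := h^2 \int_0^{x_3}(e_{33} + p_{33})(x',s)\,ds$, so that $\frac{1}{h}(\E\phi^h)_{\alpha 3} \to e_{\alpha 3} + p_{\alpha 3}$, $\frac{1}{h^2}(\E\phi^h)_{33} = e_{33} + p_{33}$, while the $2 \times 2$ block of $\E\phi^h$ is of order $h$. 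The obstacle is that $e, p$ are only in $L^2$, so the $x'$-derivatives needed to make $\phi^h$ lie in $H^1$ are not available directly.

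The remedy is to mollify: pick smooth $m^n_{i3} \in C^\infty(\closure{\Omega})$ converging to $m_{i3} := e_{i3} + p_{i3}$ in $L^2(\Omega)$ as $n \to \infty$, form the corresponding corrector $\phi^{h,n}$ by the formulas above with $m^n$ in place of $m$, and multiply by a cutoff $\chi^h \in C^\infty_c(\omega;[0,1])$ vanishing on an $h$-neighborhood of $\gamma_D$, equal to $1$ outside a $2h$-neighborhood, with $|\nabla\chi^h| \leq C/h$. Set $u^{h,n} := u + \chi^h\phi^{h,n}$ and $e^{h,n} := \E u^{h,n} - p^h$, so that $(u^{h,n}, e^{h,n}, p^h, \alpha^h) \in \calA_h^{\rm hard}(w)$: the Dirichlet trace matches $w$ because $\chi^h$ vanishes near $\gamma_D$, the strain equation is forced by construction, and the remaining constraints were secured above. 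A direct computation of the commutator $\Lambda_h(\phi^{h,n} \odot \nabla\chi^h)$, using $\phi^{h,n}_\alpha = O(h)$, $\phi^{h,n}_3 = O(h^2)$ and the $O(h)$-measure of the support of $\nabla\chi^h$, shows that its $L^2$ norm is $O(\sqrt{h})$; hence it is negligible and, for every fixed $n$, $\Lambda_h e^{h,n} \to e + \eta^n$ strongly in $L^2$ as $h \to 0$, with $\eta^n$ supported in the $i3$ entries and equal there to $m^n_{i3} - m_{i3}$, so that $\|\eta^n\|_{L^2} \to 0$ as $n \to \infty$.

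A standard Attouch-type diagonal extraction then produces $h_n \to 0$ slow enough to dominate the $n$-dependent bounds (in particular $\|\nabla_{x'} m^n\|_{L^2}$, which enters the $H^1$-norm of $\phi^{h,n}$), along which all five convergences \eqref{recovery sequence convergence u}--\eqref{recovery sequence convergence dissipation} hold for $(u^{h_n,n}, e^{h_n,n}, p^{h_n}, \alpha^{h_n})$. The only genuinely delicate point is the coordination of the cutoff scale, the mollification parameter and $h$ so that the boundary-layer commutator, the smoothing error in the out-of-plate entries, and the $O(h)$ error in the $2\times 2$ block of $\E\phi^{h,n}$ all vanish simultaneously; this is standard bookkeeping once the explicit form of the corrector is in hand.
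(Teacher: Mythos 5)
Your construction is correct, and it is a genuine variant of the paper's argument rather than a reproduction of it. The shared core is the same corrector idea: the missing out-of-plane entries of $e+p$ are produced by an $x_3$-antiderivative of (smoothed) data, scaled by $2h$ in the in-plane components and by $h^2$ in the third, so that $\Lambda_h \E u^h$ recovers the $i3$ entries. The implementations differ. The paper regularizes $e_{i3}$ and $p_{i3}$ separately by $H^1_0(\Omega)$ functions with the quantitative bound $\|\nabla \eta^h_i\|_{L^2}, \|\nabla \pi^h_i\|_{L^2}\leq C h^{-1/2}$; this builds the Dirichlet condition into the corrector (no cutoff, no commutator, no diagonal extraction), but since then $\Lambda_h p^h\neq p$, admissibility is restored by enlarging the hardening variable, $\alpha^h:=\alpha+\disspot(x',\Lambda_h p^h-p)$, and \eqref{recovery sequence convergence alpha}--\eqref{recovery sequence convergence dissipation} follow from $1$-homogeneity and the triangle inequality for $\disspot$. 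You instead keep $p^h:=\Lambda_h^{-1}p$ and $\alpha^h:=\alpha$ exactly, which turns \eqref{recovery sequence convergence p}--\eqref{recovery sequence convergence dissipation} into identities and confines all approximation to the displacement; the price is the boundary cutoff (whose commutator you correctly estimate as $O(\sqrt h)$ up to $n$-dependent constants) and a coupling of the mollification parameter with $h$. Two small points to phrase carefully: first, the theorem is invoked in the proof of Theorem \ref{main result 1} along the prescribed family $h\to 0$, so instead of extracting a sequence $h_n$ you should select the mollification index as a slowly growing function $n(h)\to\infty$ with, say, $h\|m^{n(h)}\|_{H^1}+\sqrt{h}\,\|m^{n(h)}\|_{L^\infty}\to 0$, so that a recovery tuple is defined for every $h$; second, a cutoff cannot simultaneously lie in $C^\infty_c(\omega)$ and equal $1$ outside a $2h$-neighbourhood of $\gamma_D$ alone, so either let it vanish near all of $\partial\omega$ (harmless, since only $\Gamma_D$ carries a boundary condition) or drop the compact-support requirement. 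With these cosmetic adjustments your argument is complete.
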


\begin{proof}
For $i \in \{1, 2, 3\}$, we argue as in \cite[Proposition 4.1]{Liero.Mielke.2011} and we approximate functions $e_{i3}$ and $p_{i3}$ by means of elliptic regularizations. 
Let $\eta^h_i, \pi^h_i \in H^1_0(\Omega)$ be such that
\begin{align}
	& \eta^h_i \strong (\M_{x'} e)_{i3}\quad \text{strongly in $L^2(\Omega)$}, \label{elliptic regularizations convergence e}\\ 
	& \pi^h_i \strong p_{i3}\quad \text{strongly in $L^2(\Omega)$}, \label{elliptic regularizations convergence p}
\end{align}
as $h \to 0$, and
\begin{equation} \label{elliptic regularizations bounds}
	\|\nabla \eta^h_i\|_{L^2(\Omega)} \leq C h^{-\frac{1}{2}}, \quad \|\nabla \pi^h_i\|_{L^2(\Omega)} \leq C h^{-\frac{1}{2}}.
\end{equation}
Let functions $V^h \in L^2(\Omega;\R^3)$ be defined by
\begin{equation*}
	V^h_i(x',x_3) := \int_{-\frac{1}{2}}^{x_3} \left( \eta^h_i(x',\zeta) + \pi^h_i(x',\zeta) \right) \,d\zeta, \quad i \in \{1, 2, 3\}.
\end{equation*}
Then we construct the recovery sequence by setting
\begin{equation*}
	u^h(x) := \left(\begin{array}{c} \bar{u}(x') - x_3 \nabla_{x'} u_3(x') \\ u_3(x') \end{array}\right) + \left(\begin{array}{c} h\,(V^h)^{1,2}(x) \\ h^2\,V^h_3(x) \end{array}\right),
\end{equation*}
\begin{align*}
	e^h := \begin{pmatrix} \begin{matrix} e \end{matrix} & \begin{matrix} h\,\eta^h_1 \\ h\,\eta^h_2 \end{matrix} \\ \begin{matrix} \sym \end{matrix} & h^2\,\eta^h_3 \end{pmatrix} + \begin{pmatrix} h\,{\E}_{x'}(V^h)^{1,2} & \frac{h^2}{2}\,\nabla_{x'}V^h_3 \\ \sym & h^2\,(\pi^h_3 - p_{33}) \end{pmatrix}, \quad
	p^h := \begin{pmatrix} \begin{matrix} p^{1,2} \end{matrix} & \begin{matrix} h\,\pi^h_1 \\ h\,\pi^h_2 \end{matrix} \\ \begin{matrix} \sym \end{matrix} & h^2\,p_{33} \end{pmatrix},
\end{align*}
and
\begin{align*}
	\alpha^h := \alpha + \disspot(x', \Lambda_h p^h - p).
\end{align*}
{\PINK Above we used the abbreviation "$\sym$" in the last row to indicate that the entries of the last column are also used here (so that the matrices are symmetric).\BLACK}

Since $u = w$ on $\Gamma_\Dir$ and $\eta^h_i = \pi^h_i = 0$ on $\partial\Omega$, we have that $u^h = w$  on $\Gamma_\Dir$.
From \eqref{elliptic regularizations convergence p} we obtain \eqref{recovery sequence convergence p}, while \eqref{elliptic regularizations convergence e}--\eqref{elliptic regularizations bounds} imply \eqref{recovery sequence convergence e}.
From the calculation
\begin{equation*}
	{\E}u^h = \begin{pmatrix} {\E}_{x'}\bar{u} - x_3 {\D}^2_{x'}u_3 & 0 \\ 0 & 0 \end{pmatrix} + \begin{pmatrix} h\,{\E}_{x'}(V^h)^{1,2} & h\,\partial_{x_3}(V^h)^{1,2} + \frac{h^2}{2}\,\nabla_{x'}V^h_3 \\ \sym & h^2\,\partial_{x_3}V^h_3 \end{pmatrix}
\end{equation*}
it is easy to check that ${\E}u^h = e^h+p^h$ and \eqref{recovery sequence convergence u} holds. 
Furthermore, for a.e. $x \in \Omega$ we have 
\begin{align*}
	\disspot(x', \Lambda_h p^h(x))
	& \leq \disspot(x',p(x)) + \disspot(x', \Lambda_h p^h(x) - p(x)) 
	\\& \leq \alpha(x) + \disspot(x', \Lambda_h p^h(x) - p(x)) 
	= \alpha^h(x),
\end{align*}
from which we deduce $(u^h, e^h, p^h, \alpha^h) \in \calA_{h}^{\rm hard}(w)$.
Lastly, since from the definition of $\alpha^h$ and \eqref{coercivity of R} we have
\begin{align*}
	| \alpha^h(x) - \alpha(x) | = \disspot(x', \Lambda_h p^h(x) - p(x)) \;\text{ and }\; | \disspot(x', \Lambda_h p^h(x)) - \disspot(x', p(x)) | \leq R_K | \Lambda_h p^h(x) - p(x)) |,
\end{align*}
from \eqref{recovery sequence convergence p} we infer \eqref{recovery sequence convergence alpha} and \eqref{recovery sequence convergence dissipation}.
\end{proof}

\subsection{Limiting quasistatic evolution} \label{seclimquasiev} 

In this section we provide the definition of the limiting quasistatic evolution and prove the convergence statement in \Cref{main result 1} and \Cref{improved convergence} which are the main results of the first part of the paper. 

We recall the definition of energy functionals $\calQ^{\rm el}_{0}$, $\calQ^{\rm hard}_{0}$ and $\calR_{0}$ given in \eqref{definition Q^el_0}, \eqref{definition Q^hard_0} and \eqref{definition R_0}. 
The associated $\calR_{0}$-variation of a map $(p, \alpha) : [0, T] \to L^2(\Omega;\M^{3 \times 3}_{\dev}) \times L^2(\Omega)$ on $[a,b]$ is then defined as
\begin{equation*}
	\calV_{\calR_{0}}(p, \alpha; a, b) := \sup\left\{ \sum_{i = 1}^{n} \calR_{0}\!\left(p(t_{i+1}) - p(t_i), \alpha(t_{i+1}) - \alpha(t_i)\right) : a = t_1 < t_2 < \ldots < t_n = b,\ n \in \N \right\}.
\end{equation*}

We now give the notion of the limiting quasistatic elasto-plastic evolution.

\begin{definition} \label{limiting quasistatic evolution}
A \emph{limiting quasistatic evolution} for the boundary datum $w(t)$ is a function $t \mapsto (u(t), e(t), p(t), \alpha(t))$ from $[0, T]$ into $H^1_{KL}(\Omega) \times L^2(\Omega;\M^{2 \times 2}_{\sym}) \times L^2(\Omega;\M^{3 \times 3}_{\dev}) \times L^2(\Omega)$ which satisfies the following conditions:
\begin{enumerate}[label=(qs\arabic*)$_{0}$]
	\item \label{0-qs S} for every $t \in [0, T]$ we have $(u(t), e(t), p(t), \alpha(t)) \in \calA_{0}^{\rm hard}(w(t))$ and
	\begin{align*}
		& \calQ^{\rm el}_{0}(e(t)) + \calQ^{\rm hard}_{0}(p(t), \alpha(t)) 
		- \big\langle \ell(t), u(t) \big\rangle
		\\& \leq \calQ^{\rm el}_{0}(\eta) + \calQ^{\rm hard}_{0}(\pi, \beta) + \calR_{0}(\pi - p(t), \beta - \alpha(t)) 
		- \big\langle \ell(t), \upsilon \big\rangle,
	\end{align*}
	for every $(\upsilon, \eta, \pi, \beta) \in \calA_{0}^{\rm hard}(w(t))$.
	\item \label{0-qs E} 
	for every $t \in [0, T]$
	\begin{align*}
		& \calQ^{\rm el}_{0}(e(t)) + \calQ^{\rm hard}_{0}(p(t), \alpha(t)) 
		+ \calV_{\calR_{0}}(p, \alpha; 0, t) 
		- \big\langle \ell(t), u(t) \big\rangle
		\\& = \calQ^{\rm el}_{0}(e(0)) + \calQ^{\rm hard}_{0}(p(0), \alpha(0)) 
		- \big\langle \ell(0), u(0) \big\rangle 
		\\& + \int_0^t \left( \int_{\Omega} \red{\C}(x') e(s) : {\E}\dot{w}(s) \,dx - \big\langle \ell(s), \dot{w}(s) \big\rangle - \big\langle \dot{\ell}(s), u(s) \big\rangle \right) \,ds.
	\end{align*}
\end{enumerate}
\end{definition}

Recalling the definition of a $h$-quasistatic evolution for the boundary datum $w(t)$ given in \Cref{h-quasistatic evolution}, we are in a position to formulate the main result of this section.

\begin{theorem} \label{main result 1}
{\PINK Let the loads and boundary data of the problem satisfy \eqref{assbd} and \eqref{assloads}.\BLACK}
Assume that there exists a sequence of tuples $(u^h_0, e^h_0, p^h_0, \alpha^h_0) \in \calA_{h}^{\rm hard}(w(0))$ such that
\begin{align}
	& u^h_0 \weak u_0 \quad \text{weakly in $H^1(\Omega;\R^3)$}, \label{main result u^h_0 condition}\\
	& \Lambda_h e^h_0 \strong \M_{x'}e_0 \quad \text{strongly in $L^2(\Omega;\M^{3 \times 3}_{\sym})$}, \label{main result e^h_0 condition}\\
	& \Lambda_h p^h_0 \strong p_0 \quad \text{strongly in $L^2(\Omega;\M^{3 \times 3}_{\dev})$}, \label{main result p^h_0 condition}\\
	& \alpha^h_0 \strong \alpha_0 \quad \text{strongly in $L^2(\Omega)$}, \label{main result alpha^h_0 condition}
\end{align}
for some $(u_0, e_0, p_0, \alpha_0) \in \calA_{0}^{\rm hard}(w(0))$. 
For every $h > 0$, let 
\begin{equation*}
	t \mapsto (u^h(t), e^h(t), p^h(t), \alpha^h(t))
\end{equation*}
be a $h$-quasistatic evolution for the boundary datum $w(t)$ such that $u^h(0) = u^h_0$, $e^h(0) = e^h_0$, $p^h(0) = p^h_0$ and $\alpha^h(0) = \alpha_0$. 
Then, there exists a limiting quasistatic evolution
\vspace{-0.25\baselineskip}
\begin{equation*}
	t \mapsto (u(t), e(t), p(t), \alpha(t)) \vspace{-0.25\baselineskip}
\end{equation*}
for the boundary datum $w(t)$ such that $u(0) = u_0$,\, $e(0) = e_0$, $p(0) = p_0$, and $\alpha(0) = \alpha_0$, and such that on a subsequence
for every $t \in [0, T]$ we have 
\begin{align}
	& u^h(t) \weak u(t) \quad \text{weakly in $H^1(\Omega;\R^3)$}, \label{main result u^h(t)}\\
	& \Lambda_h e^h(t) \weak \M_{x'} e(t) \quad \text{weakly in $L^2(\Omega;\M^{3 \times 3}_{\sym})$}, \label{main result e^h(t)}\\
	& \Lambda_h p^h(t) \weak p(t) \quad \text{weakly in $L^2(\Omega;\M^{3 \times 3}_{\dev})$}, \label{main result p^h(t)}\\
	& \alpha^h(t) \weak \alpha(t) \quad \text{weakly in $L^2(\Omega)$}. \label{main result alpha^h(t)}
\end{align}
\end{theorem}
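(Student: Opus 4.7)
The plan is to run the standard evolutionary \(\Gamma\)-convergence scheme for rate-independent systems (cf.~\cite{DalMaso.DeSimone.Mora.2006,Davoli.Mora.2013,Mielke.Roubicek.2015}), adapted to the scaled operator \(\Lambda_h\) and the Kirchhoff--Love limit structure. First I would derive uniform a priori bounds by rewriting the energy balance \ref{h-qs E} via \Cref{alternative definition of h-quasistati evolution for a priori estimates} using the stress field \(\sigma^h\) of \Cref{safe-loads condition lemma} (whose existence is a byproduct of \Cref{existence result of a quasistatic evolution}); this eliminates the load pairing \(\langle \ell(t),u^h(t)\rangle\) in favour of stress-strain pairings that are uniformly bounded in \(h\). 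Combined with coercivity of \(\calQ^{\rm el}_h\), \(\calQ^{\rm hard}_h\) and the lower bound \eqref{coercivity of R} of \(\disspot\), a Gronwall argument produces uniform bounds on \(\|\Lambda_h e^h(t)\|_{L^2}\), \(\|\Lambda_h p^h(t)\|_{L^2}\), \(\|\alpha^h(t)\|_{L^2}\), and on \(\calV_{\calR_h}(p^h,\alpha^h;0,T)\); the bound \eqref{equivalence of variations} gives control of the \(L^2\)-variation of \((\Lambda_h p^h,\alpha^h)\), while Korn's inequality (as in \Cref{two-scale weak limit of scaled strains - 2x2 submatrix}) bounds \(\|u^h(t)\|_{H^1}\) uniformly in \(h\) and \(t\). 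Using metrizability of weak \(L^2\)-convergence on bounded sets, a Helly-type selection theorem then provides a subsequence along which \((\Lambda_h p^h(t),\alpha^h(t)) \weak (p(t),\alpha(t))\) weakly in \(L^2\) for every \(t\in[0,T]\).

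With \((p(t),\alpha(t))\) fixed, \Cref{two-scale weak limit of scaled strains - 2x2 submatrix} yields \(u^h(t) \weak u(t) \in KL(\Omega)\) and \(\Lambda_h e^h(t) \weak e(t)\) along a further subsequence; reading the \((1,2)\)-block of \(\Lambda_h \E u^h = \Lambda_h e^h + \Lambda_h p^h\) in the limit gives the strain decomposition \(\E_{x'}\bar{u}(t) - x_3 \nabla^2_{x'} u_3(t) = e(t)^{1,2}+p(t)^{1,2}\), and weak lower semicontinuity of the convex constraint \(\{\disspot(x',\cdot) \leq \cdot\}\) produces \((u(t),e(t),p(t),\alpha(t)) \in \calA_0^{\rm hard}(w(t))\). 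For the stability \ref{0-qs S} I would fix an arbitrary competitor in \(\calA_0^{\rm hard}(w(t))\), build a recovery sequence via \Cref{recovery sequence}, test the \(h\)-stability \ref{h-qs S} against it, and pass to the liminf using \Cref{lower semicontinuity of energies - h to 0} on the left and the strong convergence \eqref{recovery sequence convergence dissipation} on the right. The energy balance \ref{0-qs E} splits in the usual way: ``\(\leq\)'' is obtained by taking the liminf of \ref{h-qs E} via \eqref{lower semicontinuity Eh}, \eqref{lower semicontinuity Rh} and dominated convergence for the time integral, while ``\(\geq\)'' follows from the limit stability \ref{0-qs S} by a Riemann-sum argument, testing at partition points \(t_i\) against the affine perturbation \((u(t_i)+w(t_{i+1})-w(t_i),\, e(t_i)+\E w(t_{i+1})-\E w(t_i),\, p(t_i),\, \alpha(t_i))\) and refining.

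The main obstacle is the compactness step: ensuring that a single subsequence yields pointwise-in-\(t\) weak convergence of all four components. Helly produces this directly only for \((p^h,\alpha^h)\); the passage to \((u^h,\Lambda_h e^h)\) relies on uniqueness (in the scaled strain) of the minimizer of \eqref{minimum problem h} with data \((p(t),\alpha(t),w(t))\), which follows from the strict convexity of \(\calQ^{\rm el}_h+\calQ^{\rm hard}_h\) and from the quantitative continuous dependence in \Cref{continuous dependence on the data}, allowing the further \(t\)-dependent subsequence for \((u^h(t),\Lambda_h e^h(t))\) to be discarded. A secondary subtlety is that \Cref{two-scale weak limit of scaled strains - 2x2 submatrix} identifies only the \((1,2)\)-block of the limit of \(\Lambda_h \E u^h\); the \((i,3)\)-components of the limits \(e(t)\) and \(p(t)\) are not determined by \(u(t)\) alone and must be tracked through the weak \(L^2\)-limits of \(\Lambda_h e^h\) and \(\Lambda_h p^h\) themselves, which is precisely why the scaling \(\Lambda_h\) must be preserved throughout the argument.
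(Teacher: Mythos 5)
Your overall skeleton (a priori bounds through \Cref{alternative definition of h-quasistati evolution for a priori estimates}, pointwise-in-$t$ compactness, recovery sequences for stability, lower semicontinuity plus a Riemann-sum argument for the energy identity) matches the paper, but the global stability step as you describe it would fail. If you recover a competitor $(\upsilon,\eta,\pi,\beta)\in\calA_{0}^{\rm hard}(w(t))$ by \Cref{recovery sequence} and test \ref{h-qs S} directly, the right-hand side contains $\calR_{h}(\pi^h-p^h(t),\beta^h-\alpha^h(t))$, not $\calR_{h}(\pi^h,\beta^h)$; the convergence \eqref{recovery sequence convergence dissipation} says nothing about this term. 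Since $\Lambda_h p^h(t)$ and $\alpha^h(t)$ converge only weakly, the best you can get for the difference is lower semicontinuity, which is the wrong direction on that side of the inequality; worse, because of the constraint built into $\gendisspot$ in \eqref{dissipation potential with hardening}, the term is $+\infty$ unless $\disspot(x',\Lambda_h\pi^h-\Lambda_h p^h(t))\leq \beta^h-\alpha^h(t)$ a.e., which a recovery sequence constructed independently of the state $(p^h(t),\alpha^h(t))$ has no reason to satisfy, so the tested inequality may be vacuous. The paper avoids this precisely through \Cref{stability condition equivalence}: \ref{h-qs S} is equivalent to the linearized inequality \eqref{minimality condition h}, in which the dissipation involves only a competitor \emph{direction} in $\calA_{h}^{\rm hard}(0)$ and the state enters only through bilinear pairings; one then recovers directions $(\upsilon,\eta,\pi,\beta)\in\calA_{0}^{\rm hard}(0)$ strongly, passes to the limit by weak--strong convergence together with \eqref{recovery sequence convergence R}, and concludes by the converse implication of the same lemma at the limit level. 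Equivalently, a ``mutual recovery sequence'' $(u^h(t)+\upsilon^h,\,e^h(t)+\eta^h,\,p^h(t)+\pi^h,\,\alpha^h(t)+\beta^h)$ (admissible by the triangle inequality for $\disspot$) would work; your plan contains neither device, and this is the essential missing idea.

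Two smaller points on compactness. The dissipation only controls $\|\Lambda_h p^h(t_{i+1})-\Lambda_h p^h(t_i)\|_{L^1}$ via \eqref{coercivity of R}, not an $L^2$-variation as you claim, so your Helly-type selection must be run in the weak topology on bounded sets of $L^2$ using the $L^1$-variation bound together with the uniform $L^2$ bounds (and monotonicity of $\alpha^h$ in $t$); this is workable, but note the paper instead obtains a single $t$-independent subsequence for $(\Lambda_h e^h,\Lambda_h p^h,\alpha^h)$ by deducing equicontinuity in time from \Cref{continuous dependence on the data} and the absolute continuity of $w$, and then applying Ascoli--Arzel\`a. Your idea of discarding the $t$-dependent subsequence for $(u^h(t),\Lambda_h e^h(t))$ by a uniqueness argument is legitimate, but the correct justification is strict convexity of the \emph{limit} stability functional in the elastic strain together with Korn--Poincar\'e (this is exactly how the paper argues in Step 2 of \Cref{main result 2}), rather than uniqueness for the $h$-level problem \eqref{minimum problem h} or \Cref{continuous dependence on the data}; and it presupposes that the stability passage has been repaired as above, since stability is what identifies the limit elastic strain.
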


\begin{proof}
The proof is divided into steps, in the spirit of evolutionary $\Gamma$--convergence. 

\medskip
\noindent{\bf Step 1: \em Compactness.} \nopagebreak

Firstly, we can prove that that there exists a constant $C$, depending only on the initial and boundary data, such that
\begin{align} \label{boundness in time 1}
\begin{split}
    \sup_{t \in [0, T]} \left\|\Lambda_h e^h(t)\right\|_{L^2(\Omega;\M^{3 \times 3}_{\sym})} \leq C,
	\qquad \sup_{t \in [0, T]} \left\|\Lambda_h p^h(t)\right\|_{L^2(\Omega;\M^{3 \times 3}_{\dev})} \leq C,\\ \sup_{t \in [0, T]} \left\|\alpha^h(t)\right\|_{L^2(\Omega)} \leq C 
	\;\text{ and }\; \sup_{t \in [0, T]} \left\|u^h(t)\right\|_{H^1(\Omega;\M^{3 \times 3}_{\sym})} \leq C, 
\end{split}
\end{align}
for every $h>0$. 
Indeed, we have that
\begin{align}
\nonumber	\left\|u^h(t)\right\|_{H^1(\Omega;\R^3)} & \leq C \left(\left\|u^h(t)\right\|_{L^2(\Gamma_\Dir;\R^3)} + \left\|{\E}u^h(t)\right\|_{L^2(\Omega;\M^{3 \times 3}_{\sym})}\right)\\
\nonumber
	& \leq C \left(\left\|w(t)\right\|_{L^2(\Gamma_\Dir;\R^3)} + \left\|e^h(t)\right\|_{L^2(\Omega;\M^{3 \times 3}_{\sym})} + \left\|p^h(t)\right\|_{L^2(\Omega;\M^{3 \times 3}_{\sym})}\right)\\
	\label{mmarin1}
	& \leq C \left(\left\|w(t)\right\|_{L^2(\Omega;\R^3)} + \left\|\Lambda_h e^h(t)\right\|_{L^2(\Omega;\M^{3 \times 3}_{\sym})} + \left\|\Lambda_h p^h(t)\right\|_{L^2(\Omega;\M^{3 \times 3}_{\dev})}\right).
\end{align} 
By using the energy balance of the $h$-quasistatic evolution \ref{h-qs E} 
and the coercivity of the quadratic energies $\calQ^{\rm el}_{h}$ and $\calQ^{\rm hard}_{h}$
\begin{align*}
	& r_{\C} \left\|\Lambda_h e^h(t)\right\|_{L^2(\Omega;\M^{3 \times 3}_{\sym})}^2 + r_{\Hkin} \left\|\Lambda_h p^h(t)\right\|_{L^2(\Omega;\M^{3 \times 3}_{\dev})}^2 + r_{\Hiso} \left\|\alpha^h(t)\right\|_{L^2(\Omega)}^2 
	\\& \leq R_{\C} \left\|\Lambda_h e^h(0)\right\|_{L^2(\Omega;\M^{3 \times 3}_{\sym})}^2 + R_{\Hkin} \left\|\Lambda_h p^h(0)\right\|_{L^2(\Omega;\M^{3 \times 3}_{\dev})}^2 + R_{\Hiso} \left\|\alpha^h(0)\right\|_{L^2(\Omega)}^2 
	\\& + 2 R_{\C} \sup_{t \in [0, T]} \left\|\Lambda_h e^h(t)\right\|_{L^2(\Omega;\M^{3 \times 3}_{\sym})} \int_0^T \left\|{\E}\dot{w}(s)\right\|_{L^2(\Omega;\M^{3 \times 3}_{\sym})} \,ds\\ &+\sup_{t \in [0,T]}\|u(t) \|_{H^{1}(\Omega;\R^3)}\int_0^T \|\dot{l}(s)\|_{H^{-1}(\Omega;\R^3)} \,ds,
\end{align*} 
where the last two integral are well defined as $t \mapsto {\E}\dot{w}(t)$ belongs to $L^1([0, T];L^2(\Omega;\M^{3 \times 3}_{\sym}))$ and $t \mapsto \dot{l}(t)$ belongs to $L^1(0,T;H^{-1}(\Omega;\R^3))$.
In view of the boundedness of $\Lambda_h e^h_0$, $\Lambda_h p^h_0$ and $\alpha^h_0$ that is implied by \eqref{main result e^h_0 condition}--\eqref{main result alpha^h_0 condition}, property \eqref{boundness in time 1} for the first three terms now follows using the Cauchy-Schwarz inequality and \eqref{mmarin1}. The last property of \eqref{boundness in time 1} easily follows from \eqref{mmarin1}.

We infer from the absolute continuity of $w$ with respect to $t$ and \Cref{nakcor} 
that the families $\{\Lambda_h e^h\}_{h>0} \subset W^{1,1}(0,T;L^2(\Omega;\M^{3 \times 3}_{\sym}))$, $\{\Lambda_h p^h\}_{h>0} \subset W^{1,1}(0,T;L^2(\Omega;\M^{3 \times 3}_{\dev}))$, $\{\alpha^h\}_{h>0} \subset W^{1,1}(0,T;L^2(\Omega))$ are also equicontinuous.
Therefore, by the Ascoli--Arzel\`{a} theorem and the metrizability and compactness of weak topologies on the bounded sets{\RED,\BLACK} there exist three subsequences {\RED(not relabeled)\BLACK} and three absolutely continuous functions $\tilde{e} : [0, T] \to L^2(\Omega;\M^{3 \times 3}_{\sym})$, $p : [0, T] \to L^2(\Omega;\M^{3 \times 3}_{\dev})$ and $\alpha : [0, T] \to L^2(\Omega)$ such that
\begin{align}
	\label{compactness e^h(t)} & \Lambda_h e^h(t) \weak \tilde{e}(t) \quad \text{weakly in $L^2(\Omega;\M^{3 \times 3}_{\sym})$},\\
	\label{compactness p^h(t)} & \Lambda_h p^h(t) \weak p(t) \quad \text{weakly in $L^2(\Omega;\M^{3 \times 3}_{\dev})$},\\
	\label{compactness alpha^h(t)} & \alpha^h(t) \weak \alpha(t) \quad \text{weakly in $L^2(\Omega)$},
\end{align}
for every $t \in [0, T]$.
{\RED From \eqref{mmarin1} it follows that the sequences $\{u^h(t)\}$ are bounded in $H^1(\Omega;\R^3)$ uniformly with respect to $t$.
We extract a further subsequence (not relabeled) such that
\begin{align}
	\label{compactness u^h(t)} & u^h(t) \weak u(t) \quad \text{weakly in $H^1(\Omega;\R^3)$},
\end{align}
for every $t \in [0, T]$.\BLACK}
Furthermore, since $\Lambda_h {\E}u^h(t) = \Lambda_h e^h(t) + \Lambda_h p^h(t)$ in $\Omega$ for every $h > 0$ and $t \in [0, T]$, from  \Cref{two-scale weak limit of scaled strains - 2x2 submatrix} we can conclude that $u(t) \in H^1_{KL}(\Omega)$ and that $(u(t), \tilde{e}^{1,2}(t), p(t), \alpha(t)) \in \calA_{0}^{\rm hard}(w(t))$.

\medskip
\noindent{\bf Step 2: \em Global stability.} \nopagebreak

Let us fix $t \in [0, T]$ and let $(\upsilon, \eta, \pi, \beta) \in \calA_{0}^{\rm hard}(0)$. 
By \Cref{recovery sequence} there exist a sequence of tuples $(\upsilon^h, \eta^h, \pi^h, \beta^h) \in \calA_{h}^{\rm hard}(0)$ such that
\begin{align}
	& \upsilon^h \weak \upsilon \quad \text{weakly in $H^1(\Omega;\R^3)$}, \label{recovery sequence convergence upsilon}\\
	& \Lambda_h \eta^h \strong \M_{x'} \eta \quad \text{strongly in $L^2(\Omega;\M^{3 \times 3}_{\sym})$}, \label{recovery sequence convergence eta}\\
	& \Lambda_h \pi^h \strong \pi \quad \text{strongly in $L^2(\Omega;\M^{3 \times 3}_{\dev})$}, \label{recovery sequence convergence pi}\\
	& \beta^h \strong \beta \quad \text{strongly in $L^2(\Omega)$}, \label{recovery sequence convergence beta}\\
	& \calR_{h}(\pi^h, \beta^h) \to \calR_{0}(\pi, \beta) \label{recovery sequence convergence R}.
\end{align}
By \Cref{stability condition equivalence}, the stability condition \ref{h-qs S} for $(u^h(t), e^h(t), p^h(t), \alpha^h(t))$ implies
\begin{align*}
	- \calR_{h}(\pi^h, \beta^h) \leq 
	\int_{\Omega} \left( \C\!\left(x'\right) \Lambda_h e^h(t) : \Lambda_h \eta^h + \Hkin\!\left(x'\right) \Lambda_h p^h(t) : \Lambda_h \pi^h + \Hiso\!\left(x'\right) \alpha^h(t) \cdot \beta^h \right) \,dx 
	- \big\langle \ell(t), \upsilon^h \big\rangle
\end{align*}
for every $h > 0$.
From the convergences \eqref{compactness e^h(t)}--\eqref{compactness alpha^h(t)} and \eqref{recovery sequence convergence upsilon}--\eqref{recovery sequence convergence R}, and recalling \eqref{Cred 2x2 : 3x3}, we obtain 
\begin{align*}
	- \calR_{0}(\pi, \beta) \leq 
	\int_{\Omega} \left( \red{\C}\!\left(x'\right) \tilde{e}^{1,2}(t) : \eta + \Hkin\!\left(x'\right) p(t) : \pi + \Hiso\!\left(x'\right) \alpha(t) \cdot \beta \right) \,dx 
	- \big\langle \ell(t), \upsilon \big\rangle.
\end{align*}
As $(\upsilon, \eta, \pi, \beta)$ were arbitrary, arguing as in the proof of \Cref{stability condition equivalence} we can conclude that the above inequality implies that $(u(t), \tilde{e}^{1,2}(t), p(t), \alpha(t)) \in \calA_{0}^{\rm hard}(w(t))$ {\RED satisfies\BLACK} the global stability of the limiting quasistatic evolution \ref{0-qs S}.

\medskip
\noindent{\bf Step 3: \em Identification of the limiting scaled elastic strain.} \nopagebreak

We claim that the limit in \eqref{compactness e^h(t)} satisfies
\begin{equation} \label{elastic strain minimality}
	\tilde{e}(t) = \M_{x'} e(t)
\end{equation}
for every $t \in [0, T]${\RED, where $e(t) = \tilde{e}^{1,2}(t)$\BLACK}. 
Indeed, if we consider test functions of the form $(\pm\psi, \pm{\E}\psi, 0, 0)$ in \eqref{minimality condition h}, where $\psi \in H^1(\Omega;\R^3)$ with $\psi = 0$ on $\Gamma_\Dir$, then we get the equality 
\[
	\int_{\Omega} \C\!\left(x'\right) \Lambda_h e^h(t) : \Lambda_h {\E}\psi \,dx = \big\langle \ell(t), \psi \big\rangle,
\]
for every $h > 0$. 
Let now $v \in C_c^{\infty}(\Omega;\R^3)$ and $V \in C^{\infty}(\closure{\Omega};\R^3)$ be defined by
\begin{equation*}
	V(x',x_3) := \int_{-\frac{1}{2}}^{x_3} v(x',\zeta) \,d\zeta.
\end{equation*}
By putting
\begin{equation*}
	\psi(x) = \left(\begin{array}{c} 2h\,V_1(x) \\ 2h\,V_2(x) \\ h^2\,V_3(x) \end{array}\right),
\end{equation*}
and passing to the limit, it is easy to see that
\begin{equation*}
	\int_{\Omega} \C\!\left(x'\right) \tilde{e}(t) : \left(\begin{array}{ccc} 0 & 0 & v_1(x) \\ 0 & 0 & v_2(x) \\ v_1(x) & v_2(x) & v_3(x) \end{array} \right) \,dx = \int_{\Omega} \C\!\left(x'\right) \tilde{e}(t) : \left(\begin{array}{ccc} 0 & 0 & \partial_{x_3}V_1(x) \\ 0 & 0 & \partial_{x_3}V_2(x) \\ \partial_{x_3}V_1(x) & \partial_{x_3}V_2(x) & \partial_{x_3}V_3(x) \end{array} \right) \,dx = 0.
\end{equation*}
Since $v$ was arbitrary, we can conclude the condition \eqref{elastic strain minimality} by the characterization \eqref{C tensor KKT condition}.

\medskip
\noindent{\bf Step 4: \em Energy balance.} \nopagebreak

In order to prove energy balance of the two-scale quasistatic evolution \ref{0-qs E}, it is enough (by arguing as in, e.g., \cite[Theorem 4.7]{DalMaso.DeSimone.Mora.2006} and \cite[Theorem 2.7]{Francfort.Giacomini.2012}) to prove the energy inequality
\begin{align} \label{main result dr - step 3 inequality}
\begin{split}
	& \calQ^{\rm el}_{0}(e(t)) + \calQ^{\rm hard}_{0}(p(t), \alpha(t)) 
	+ \calV_{\calR_{0}}(p, \alpha; 0, t) 
	- \big\langle \ell(t), u(t) \big\rangle
	\\& \leq \calQ^{\rm el}_{0}(e(0)) + \calQ^{\rm hard}_{0}(p(0), \alpha(0)) 
	- \big\langle \ell(0), u(0) \big\rangle 
	\\& + \int_0^t \left( \int_{\Omega} \red{\C}(x') e(s) : {\E}\dot{w}(s) \,dx - \big\langle \ell(s), \dot{w}(s) \big\rangle - \big\langle \dot{\ell}(s), u(s) \big\rangle \right) \,ds.
\end{split}
\end{align}

For a fixed $t \in [0, T]$, let us consider a subdivision $0 = t_1 < t_2 < \ldots < t_n = t$ of $[0,t]$.
Due to the lower semicontinuity from \Cref{lower semicontinuity of energies - h to 0}, we have 
{\allowdisplaybreaks
\begin{align*}
	& \calQ^{\rm el}_{0}(e(t)) + \calQ^{\rm hard}_{0}(p(t), \alpha(t)) + \sum_{i = 1}^{n} \calR_{0}\!\left(p(t_{i+1}) - p(t_i), \alpha(t_{i+1}) - \alpha(t_i)\right) - \big\langle \ell(t), u(t) \big\rangle
	\\*& \leq \liminf\limits_{h}\left( \calQ^{\rm el}_{h}(e^h(t)) + \calQ^{\rm hard}_{h}(p^h(t), \alpha^h(t)) + \sum_{i = 1}^{n} \calR_{h}\!\left(p^h(t_{i+1}) - p^h(t_i), \alpha^h(t_{i+1}) - \alpha^h(t_i)\right) - \big\langle \ell(t), u^h(t) \big\rangle \right)
	\\& \leq \liminf\limits_{h}\left( \calQ^{\rm el}_{h}(e^h(t)) + \calQ^{\rm hard}_{h}(p^h(t), \alpha^h(t)) + \calV_{\calR_{h}}(p^h, \alpha^h; 0, t) - \big\langle \ell(t), u^h(t) \big\rangle \right)
	\\& = \liminf\limits_{h}\Bigg( 
	\calQ^{\rm el}_{h}(e^h(0)) + \calQ^{\rm hard}_{h}(p^h(0), \alpha^h(0)) 
	- \big\langle \ell(0), u^h(0) \big\rangle
	\\*& \quad\quad\quad\quad\quad + \int_0^t \left( \int_{\Omega} \C(x') \Lambda_h e^h(s) : {\E}\dot{w}(s) \,dx 
	- \big\langle \ell(s), \dot{w}(s) \big\rangle - \big\langle \dot{\ell}(s), u^h(s) \big\rangle \right) \,ds
	\Bigg),
\end{align*}
}\noindent
where the last inequality follows from \ref{h-qs E}.
In view of strong convergence assumed in \eqref{main result u^h_0 condition}--\eqref{main result alpha^h_0 condition}, by the Lebesgue's dominated convergence theorem we get
\begin{align*}
	& \lim\limits_{h}\left( \calQ^{\rm el}_{h}(e^h(0)) + \calQ^{\rm hard}_{h}(p^h(0), \alpha^h(0)) - \big\langle \ell(0), u^h(0) \big\rangle + \int_0^t \left( \int_{\Omega} \C(x') \Lambda_h e^h(s) : {\E}\dot{w}(s) \,dx - \big\langle \dot{\ell}(s), u^h(s) \big\rangle \right) \right)\\
	& = \calQ^{\rm el}_{0}(e(0)) + \calQ^{\rm hard}_{0}(p(0), \alpha(0)) - \big\langle \ell(0), u(0) \big\rangle + \int_0^t \left( \int_{\Omega} \C(x') \M_{x'} e(s) : {\E}\dot{w}(s) \,dx - \big\langle \dot{\ell}(s), u(s) \big\rangle \right).
\end{align*}
Hence, we have
\begin{align*}
	& \calQ^{\rm el}_{0}(e(t)) + \calQ^{\rm hard}_{0}(p(t), \alpha(t)) 
	+ \sum_{i = 1}^{n} \calR_{0}\!\left(p(t_{i+1}) - p(t_i), \alpha(t_{i+1}) - \alpha(t_i)\right) 
	- \big\langle \ell(t), u(t) \big\rangle
	\\& \leq \calQ^{\rm el}_{0}(e(0)) + \calQ^{\rm hard}_{0}(p(0), \alpha(0)) 
	- \big\langle \ell(0), u(0) \big\rangle 
	\\& + \int_0^t \left( \int_{\Omega} \red{\C}(x') e(s) : {\E}\dot{w}(s) \,dx - \big\langle \ell(s), \dot{w}(s) \big\rangle - \big\langle \dot{\ell}(s), u(s) \big\rangle \right) \,ds.
\end{align*}
Taking the supremum over all partitions of $[0,t]$ yields \eqref{main result dr - step 3 inequality}, which concludes the proof.
\end{proof}

\begin{remark} \label{remark gamma convergence} 
By using {\RED the\BLACK} lower semicontinuity given in \Cref{lower semicontinuity of energies - h to 0} and {\RED the\BLACK} construction of {\RED a\BLACK} recovery sequence given in \Cref{recovery sequence}{\RED,\BLACK} it is not difficult to see that the sequence of functionals given by 
\[
	\mathcal{F}_h(u^h, e^h, p^h, \alpha^h)=\calQ^{\rm el}_{h}(e^h) + \calQ^{\rm hard}_{h}(p^h, \alpha^h) +\calR_{h}(p^h, \alpha^h)- \big\langle \ell, u^h \big\rangle, \quad (u^h, e^h, p^h, \alpha^h) \in \calA_{h}^{\rm hard}( w),
\]
for some $w \in H^1_{KL}(\Omega)$ and $\ell$ of the form \eqref{definition ell} 
$\Gamma$--converges to the functional 
\[
	\mathcal{F}_0 (u, e, p, \alpha)=	\calQ^{\rm el}_{0}(e) + \calQ^{\rm hard}_{0}(p, \alpha) + \calR_{0}(p, \alpha)-\big\langle \ell, u \big\rangle,\quad (u, e, p, \alpha) \in \calA_{0}^{\rm hard}(w).
\]
Moreover we have that if 
\[
	(u^h_{\rm min}, e^h_{\rm min}, p^h_{\rm min}, \alpha^h_{\rm min})=\displaystyle\argmin_{(\upsilon, \eta, \pi, \beta) \in \calA_{h}^{\rm hard}(w)}
	\mathcal{F}_h (\upsilon, \eta, \pi, \beta),
\]
then 
\begin{align}
	& u^h_{\rm min} \weak u^0_{\rm min} \quad \text{weakly in $H^1(\Omega;\R^3)$},\\
	& \Lambda_h e^h_{\rm min} \strong \M_{x'}e_{\rm min}^0 \quad \text{strongly in $L^2(\Omega;\M^{3 \times 3}_{\sym})$},\\
	& \Lambda_h p^h_{\rm min} \strong p_{\rm min}^0 \quad \text{strongly in $L^2(\Omega;\M^{3 \times 3}_{\dev})$},\\
	& \alpha^h_{\rm min} \strong \alpha_{\rm min}^0 \quad \text{strongly in $L^2(\Omega)$},
\end{align}
where 
\[
	(u^0_{\rm min}, e^0_{\rm min}, p^0_{\rm min}, \alpha^0_{\rm min})=\displaystyle\argmin_{(\upsilon, \eta, \pi, \beta) \in \calA_{0}^{\rm hard}(w)}
	\mathcal{F}_0 (\upsilon, \eta, \pi, \beta).
\]
In particular this implies, using \Cref{remark stable states}, that for any $w(0) \in H^1_{KL}(\Omega)$ there exists $(u^0, e^0, p^0, \alpha^0) \in \calA_{0}^{\rm hard}(w(0))$ that satisfies global stability{\RED,\BLACK} as well as $(u^h, e^h, p^h, \alpha^h) \in \calA_{h}^{\rm hard}(w(0))$ that also satisfy global stability such that convergences \eqref{main result u^h_0 condition}--\eqref{main result alpha^h_0 condition} are valid.
\end{remark}

The following corollary improves the statement of \Cref{main result 1}.

\begin{corollary} \label{improved convergence} 
Under the assumption of \Cref{main result 1}{\RED,\BLACK} the convergences \eqref{main result u^h(t)}--\eqref{main result alpha^h(t)} are strong and valid as $h \to 0$ (not only on the subsequence). Moreover, the limit quasistatic evolution has a unique solution. The solution of the limit quasistatic evolution is absolutely continuous in time with values in $H^1_{KL}(\Omega) \times L^2(\Omega;\M^{3 \times 3}_{\sym})\times L^2(\Omega;\M^{3 \times 3}_{\dev}) \times L^2(\Omega)$. 
\end{corollary}

\begin{proof}
Absolute continuity of the solution of the limit quasistatic evolution can be proven by following the proof \cite[Theorem 5.2]{DalMaso.DeSimone.Mora.2006} (cf. \Cref{improved convergence 1}). By using \cite[Theorem 3.5.2]{Mielke.Roubicek.2015}) we conclude that the solution of the limit quasistatic evolution is unique.
Since the limit quasistatic evolution has a unique we have that the convergences \eqref{main result u^h(t)}--\eqref{main result alpha^h(t)} are valid on the whole sequence. It remains to prove that they are strong. Note that from the computations done in Step 4 of \Cref{main result 1}, the energy equality \ref{0-qs E} and the proof of \Cref{lower semicontinuity of energies - h to 0} we conclude that for $t \in [0,T]$
\[
	\calQ^{\rm el}_{h}(e^h(t))\to \calQ^{\rm el}_{0}(e(t)), \qquad \calQ^{\rm hard}_{h}(p^h(t), \alpha^h(t)) \to \calQ^{\rm hard}_{0}(p(t), \alpha(t)).
\]
From weak convergences \eqref{main result e^h(t)}, \eqref{main result p^h(t)}, \eqref{main result alpha^h(t)}, using the strict convexity of the above quadratic forms we conclude the strong convergences in \eqref{main result e^h(t)}, \eqref{main result p^h(t)}, \eqref{main result alpha^h(t)}. 
Since $\Lambda_h {\E}u^h =\Lambda_h e^h+\Lambda_h p^h$ we have that ${\E}u^h$ converges strongly to ${\E}u$. 
By using Korn's inequality we conclude the strong convergence in \eqref{main result u^h(t)}.
\end{proof}

\subsection{Admissible stress configurations} \label{Admissible stress configurations}

In \Cref{limit of quasistatic evolutions - eps to 0}, instead of using \Cref{stability condition equivalence} and \Cref{recovery sequence} for proving global stability of the limit problem, we will follow a stress-strain approach which was introduced in \cite{Francfort.Giacomini.2014} and further explored in \cite{Buzancic.Davoli.Velcic.2024.1st} and \cite{Buzancic.Davoli.Velcic.2024.2nd}.
To that end, we need to derive the balance equations of the limiting quasistatic evolution in terms of the corresponding stresses, which will be done in this section.

For every $e^h \in L^2(\Omega;\M^{3 \times 3}_{\sym})$, $p^h \in L^2(\Omega;\M^{3 \times 3}_{\dev})$ and $\alpha^h \in L^2(\Omega)$, $\alpha^h \geq 0$, we denote 
\begin{equation*}
	\sigma^h(x) := \C(x') \Lambda_h e^h(x),
	\quad \chikin^h(x) := - \Hkin(x') \Lambda_h p^h(x)
	\;\text{ and }\; \chiiso^h(x) := - \Hiso(x') \alpha^h(x).
\end{equation*}
Then we introduce
\begin{align*}
	\calK_{h}(f,g) := \bigg\{& (\sigma^h,\, \chikin^h,\, \chiiso^h) \in L^2(\Omega;\M^{3 \times 3}_{\sym}) \times L^2(\Omega;\M^{3 \times 3}_{\dev}) \times L^2(\Omega;(-\infty,0]) : 
	\\& -\div_{h}\sigma^h = \scaling_h\fvol \text{ in } \Omega,\quad \sigma^h\,\nu^h = \scaling_h\fsurf \text{ on } \partial{\Omega} \setminus {\Gamma_\Dir},
	\\& \sigma^h_{\dev}(x',x_3)+\chikin^h(x',x_3) \in \left(1-\chiiso^h(x',x_3)\right) K(x') \,\text{ for a.e. } x' \in \omega,\, x_3 \in I\bigg\}.
\end{align*}
Here $\nu^h = \scaling_{\frac{1}{h}}\nu$. 
Note that from the conditions 
$-\div_{h}\sigma^h = \scaling_h\fvol \text{ in } \Omega$ and $\sigma^h\,\nu^h = \scaling_h\fsurf \text{ in } \partial{\Omega} \setminus {\Gamma_\Dir}$,
for every $\varphi \in H^1(\Omega;\R^3)$ with $\varphi = 0$ on $\Gamma_\Dir$ we have
\begin{equation} \label{div sigma^h = 0}
	\int_{\Omega} \sigma^h(x) : \Lambda_h {\E}\varphi(x) \,dx 
	= \int_{\Omega} f(x) \cdot \varphi(x) \,dx 
	+ \int_{\partial{\Omega} \setminus {\Gamma_\Dir}} \fsurf(x) \cdot \varphi(x) \,d\calH^{2}.
\end{equation}
If we consider the weak limit $(\sigma, \chikin, \chiiso) \in L^2(\Omega;\M^{3 \times 3}_{\sym})$ of the sequence $(\sigma^h, \chikin^h, \chiiso^h) \in \calK_{h}(f,g)$ as $h \to 0$, then we can conclude $\sigma_{i3} = 0$ for $i=1,2,3$ form \eqref{div sigma^h = 0} by arguing as in Step 3 of the proof of \Cref{main result 1}.
Secondly, using Mazur's theorem and convexity we easily conclude that 
\begin{equation}
	\sigma_{\dev}(x',x_3)+\chikin(x',x_3) \in \left(1-\chiiso(x',x_3)\right) K(x') \,\text{ for a.e. } x' \in \omega,\, x_3 \in I. 
\end{equation} 
Lastly, let $\vartheta \in C^{\infty}(\closure{\omega};\R^3)$ such that $\vartheta=\nabla \vartheta=0$ on $\gamma_D$.
If we choose the function
\begin{equation*}
	\varphi(x) = \left(\begin{array}{c} \vartheta_1(x') - x_3\,\partial_{x_1}\vartheta_3(x') \\ \vartheta_2(x') - x_3\,\partial_{x_2}\vartheta_3(x') \\ \vartheta_3(x') \end{array}\right),
\end{equation*}
we deduce from \eqref{div sigma^h = 0} that
\begin{equation*}
\begin{split}
	\int_{\Omega} \sigma^h(x) : \begin{pmatrix} {\E}_{x'}\vartheta^{1,2}(x') - x_3 {\D}^2_{x'}\vartheta_3(x') & 0 \\ 0 & 0 \end{pmatrix} \,dx 
	=& \int_{\Omega} 
	\begin{pmatrix}
		\fvol^{1,2}(x) \\[3pt]
		\fvol_{3}(x)
	\end{pmatrix}
	\cdot 
	\begin{pmatrix}
		\vartheta^{1,2}(x') - x_3 \nabla_{x'}\vartheta_3(x') \\[3pt]
		\vartheta_3(x')
	\end{pmatrix}
	\,dx 
	\\& + \int_{\partial{\Omega} \setminus \Gamma_\Dir} \begin{pmatrix}
		\fsurf^{1,2}(x) \\[3pt]
		\fsurf_{3}(x)
	\end{pmatrix}
	\cdot 
	\begin{pmatrix}
		\vartheta^{1,2}(x') - x_3 \nabla_{x'}\vartheta_3(x') \\[3pt]
		\vartheta_3(x')
	\end{pmatrix} \,d\calH^{1}
	.
\end{split}
\end{equation*}
Passing to the limit,
and dividing by the effect of test functions $\vartheta^{1,2}$ and $\vartheta_3$, the equation above becomes
\begin{equation*}
\begin{split}
	\int_{\omega} \zerothI{\sigma}(x') : {\E}_{x'}\vartheta^{1,2}(x') \,dx' 
	=& \int_{\omega} \zerothI{\fvol^{1,2}}(x') \cdot \vartheta^{1,2}(x') \,dx' 
	\\& + \int_{\partial{\omega} \setminus \gamma_\Dir} \zerothI{\fsurf^{1,2}}(x') \cdot \vartheta^{1,2}(x') \,d\calH^{{\RED2\BLACK}}
	\\& + \int_{\omega \times \partial{I}} \fsurf^{1,2}(x) \cdot \vartheta^{1,2}(x') \,dx'
	,
\end{split}
\end{equation*}
\begin{equation*}
\begin{split}
	-\frac{1}{12} \int_{\omega} \firstI{\sigma}(x{\RED'\BLACK}) : {\D}^2_{x'}\vartheta_3(x') \,dx' 
	=& -\frac{1}{12} \int_{\omega} \firstI{\fvol^{1,2}}(x') \cdot \nabla_{x'}\vartheta_3(x') \,dx'
	+ \int_{\omega} \zerothI{\fvol_{3}}(x') \, \vartheta_3(x') \,dx'
	\\& -\frac{1}{12} \int_{\partial{\omega} \setminus \gamma_\Dir} \firstI{\fsurf^{1,2}}(x') \cdot \nabla_{x'}\vartheta_3(x') \,d\calH^{1}
	+ \int_{\partial{\omega} \setminus \gamma_\Dir} \zerothI{\fsurf_{3}}(x') \, \vartheta_3(x') \,d\calH^{1}
	\\& - \int_{\omega \times \partial{I}} x_3 \fsurf^{1,2}(x) \cdot \nabla_{x'}\vartheta_3(x') \,dx'
	+ \int_{\omega \times \partial{I}} \fsurf_{3}(x) \, \vartheta_3(x') \,dx'
	.
\end{split}
\end{equation*}
Using the integration by parts formulas,
as well as decomposing the gradient $\nabla_{x'}$ into the normal derivative $\partial_{\nu}$ and tangential derivative $\partial_{\tau}$,
we get that
\begin{equation} \label{stress membrane equation} 
\begin{split}
	-\div_{x'}\zerothI{\sigma} & = f^{\rm memb} 
	\quad \text{ in } \omega,
	\\ \zerothI{\sigma}\,\nu & = g^{\rm memb} \quad \text{ on } \partial{\omega} \setminus \gamma_\Dir,
\end{split}
\end{equation}
and
\begin{equation}\label{stress bending equation} 
\begin{split}
	-\frac{1}{12} \div_{x'}\div_{x'}\firstI{\sigma} & = f^{\rm bend} \quad \text{ in } \omega, 
	\\ \frac{1}{12} \div_{x'}\firstI{\sigma}\cdot\nu + \frac{1}{12} \partial_{\tau}(\firstI{\sigma}\,\nu\cdot\tau) & = g^{\rm bend}_0 \quad \text{ on } \partial{\omega} \setminus \gamma_\Dir,
	\\ - \firstI{\sigma}\,\nu\cdot\nu & = g^{\rm bend}_1 \quad \text{ on } \partial{\omega} \setminus \gamma_\Dir,
\end{split}
\end{equation}
where
\begin{equation} \label{limit loads expresions}
\begin{split}
	f^{\rm memb} \,:=\ & \zerothI{\fvol^{1,2}} 
	+ \fsurf^{1,2}(\cdot,\tfrac{1}{2}) + \fsurf^{1,2}(\cdot,-\tfrac{1}{2}), 
	\\ g^{\rm memb} \,:=\ & \zerothI{\fsurf^{1,2}}, 
	\\ f^{\rm bend} \,:=\ & \zerothI{\fvol_{3}} + \frac{1}{12} \div_{x'}\firstI{\fvol^{1,2}} 
	+ \fsurf_{3}(\cdot,\tfrac{1}{2}) + \fsurf_{3}(\cdot,-\tfrac{1}{2}) + {\RED\frac{1}{2}\BLACK} \div_{x'}\fsurf^{1,2}(\cdot,\tfrac{1}{2}) - {\RED\frac{1}{2}\BLACK} \div_{x'}\fsurf^{1,2}(\cdot,-\tfrac{1}{2}), 
	\\ g^{\rm bend}_0 \,:=\ & - \frac{1}{12} \firstI{\fvol^{1,2}}\cdot\nu + \zerothI{\fsurf_{3}} - {\RED\frac{1}{2}\BLACK} \fsurf^{1,2}(\cdot,\tfrac{1}{2})\cdot\nu + {\RED\frac{1}{2}\BLACK} \fsurf^{1,2}(\cdot,-\tfrac{1}{2})\cdot\nu + \frac{1}{12} \partial_{\tau}(\firstI{\fsurf^{1,2}}\cdot\tau),
	\\ g^{\rm bend}_1 \,:=\ & - \firstI{\fsurf^{1,2}}\cdot\nu.
\end{split}
\end{equation}
\begin{remark} 
{\PINK It is not surprising that the derivatives of the loads (and their boundary terms) appear on the right-hand side of the strong formulation.
This is a direct consequence of the Kirchhoff--Love ansatz and also appears in the linear elastic plate theory (see, e.g., \cite{ciarlet1997mathematical}).
The alternative to assuming additional regularity of the loads would simply be to interpret the terms on the right-hand side of \eqref{stress membrane equation} and  \eqref{stress bending equation} in the weak sense (i.e. as functionals), thereby making the right-hand sides of the strong and weak formulations equivalent. \BLACK}
\end{remark} 
\begin{remark} 
\label{remark comparison} 
We compare our limit model with the one obtained in \cite{Liero.Roche.2012}. 
There, the authors did not consider a heterogeneous plate, nor did they include isotropic hardening. 
Instead, they used a special form of the hardening and dissipation potentials, which enabled them to conclude that, in the limiting quasistatic evolution, {\PINK the plastic strain is such that \BLACK}$p_{\alpha 3} = 0$ for $\alpha = 1, 2$. 
More precisely, {\PINK in their work\BLACK} they assume that $\Hkin(x') \xi \cdot \xi=\frac{k_{\rm hard}}{2} |\xi|^2$ for every $\xi \in \M^{3 \times 3}_{\sym}${\RED,\BLACK} $x' \in \omega$ {\RED and\BLACK} for some $k_{\rm hard}>0$, $\Hiso(x') \equiv 0${\RED,\BLACK} and $\disspot(x',p)=\sigma_{\rm yield} |p|$ for every $p \in \M^{3 \times 3}_{\dev}$, $x' \in \omega$ and some $\sigma_{\rm yield}>0$.
\end{remark}

\begin{remark}
From \cite[Section 3.5]{Mielke.Roubicek.2015} it is easy to see that the limit quasistatic evolution implies the following system of equations:
\begin{align}
	-\div_{x'}\Big(\red{\C}(x')\left({\E}_{x'}\bar{u}-\zerothI{p}^{1,2}\right)\Big) 
	= f^{\rm memb}(t,x')
	\quad \text{ on } \omega, \label{dimension reduction equation 1}\\
	\frac{1}{12} \div_{x'}\div_{x'}\Big(\red{\C}(x')\left({\D}^2_{x'}u_3+\firstI{p}^{1,2}\right)\Big)
	= f^{\rm bend}(t,x')
	\quad \text{ on } \omega, \label{dimension reduction equation 2} 
\end{align}
\begin{equation} \label{dimension reduction equation 3}
	\begin{pmatrix}
		0 \\ 0
	\end{pmatrix}
	\in
	\partial\gendisspot(x', \dot{p}, \dot{\alpha})
	+ 
	\begin{pmatrix}
		\left[ \red{\C}(x')(p^{1,2}-{\E}_{x'}\bar{u}+x_3{\D}^2_{x'}u_3) \right]_{\dev} + \Hkin(x')p\\
		\Hiso(x')\alpha
	\end{pmatrix}
	\quad \text{ on } \Omega,
\end{equation}
which is a generalized version of the second-order membrane equation, fourth-order Kirchhoff's plate equation, and plastic flow law. 
{\PINK Here $f^{\rm memb}$, $f^{\rm bend}$ are defined as in \eqref{limit loads expresions}.\BLACK}
Note that the equations \eqref{dimension reduction equation 1} and \eqref{dimension reduction equation 2} are actually the first equations of \eqref{stress membrane equation} and \eqref{stress bending equation}. 
It can be shown that if $t \mapsto (u(t), p(t), \alpha(t))$ is absolutely continuous with values in $H^1_{KL}(\Omega) \times L^2(\Omega;\M^{3 \times 3}_{\dev}) \times L^2(\Omega)$ and we add to the system \eqref{dimension reduction equation 1}--\eqref{dimension reduction equation 3} boundary conditions given in the expressions \eqref{stress membrane equation}, \eqref{stress bending equation} as well as the Dirichlet boundary condition $\bar{u}(t,\cdot)=\bar{w}(t,\cdot)$, $ u_3=w_3$, $\nabla_{x'} u_3=\nabla_{x'} w_3$ on $\gamma_D$, then the system \eqref{dimension reduction equation 1}--\eqref{dimension reduction equation 3} is equivalent to \ref{0-qs S} and \ref{0-qs E}.
\end{remark}

\begin{remark} 
It is not difficult to generalize the results given in this section to the case when the elasticity tensor, hardening tensor and dissipation potential also depend on the $x_3$ variable. 
We avoided this in order to simplify the exposition.
\end{remark} 

\section{Simultaneous homogenization and vanishing of hardening} \label{limit of quasistatic evolutions - eps to 0}

In this section we want to obtain the limit model when the material changes on the fine scale $\eps \to 0$ letting at the same time the hardening tensors tend to zero (see below for the details). 
In this way in the limit we obtain the model of the homogenized plate in the context of perfect plasticity. 
Unlike in the case of hardening, the plastic strain belongs to the space of Radon measures. 
In this case one needs to define the dissipation potential at each interface, since the measure can concentrate at the interface. 
The limit model we justify by the convergence result. 
In order to obtain the convergence result one needs to define the limit model, prove the compactness statements with respect to two-scale convergence and prove that the limit satisfies the global stability and energy equality. 
In order to obtain global stability, we will use the approach introduced in \cite{Francfort.Giacomini.2014} and later explored in \cite{Buzancic.Davoli.Velcic.2024.1st,Buzancic.Davoli.Velcic.2024.2nd}. 
It consists in defining the set of limit two-scale stresses and then proving the lower bound for the two-scale dissipation functional (see \Cref{two-scale Hill's principle - regime zero} and \Cref{two-scale dissipation and plastic work inequality}). 
This immediately implies the global stability for the limit and then the energy equality is proved through global stability and lower semicontinuity. 
The main result in this section is the convergence statement given in \Cref{main result 2} and improved convergence given in \Cref{cor improved 2}. 

Here we work under the assumption that on $\gamma_\Dir=\partial \omega$ and that there are no applied loads (they are zero), i.e. the evolution is only driven by time-dependent boundary conditions.
As in \eqref{assbd} we assume that $w \in W^{1,1}(0,T;H^1_{KL}(\Omega))$ is given. 

The analysis presented here can also be {\RED extended\BLACK} to obtain the homogenized plate model with hardening (i.e. when the hardening tensors are not set to zero). 
{\PINK This model is, in fact, simpler to analyze, as it does not require the definition of a dissipation functional at the phase interfaces.\BLACK}

Next we give the outline of this section. 
In \Cref{section phase decomposition} we give the basic assumptions on the material and associated tensors. 
In \Cref{A priori estimates with vanishing hardening} we prove the a priori estimates that are used in \Cref{section two-scale limits} to obtain the necessary compactness statements with respect to two-scale convergence. 
In \Cref{semicontinuity of the dissipation functional}, relying on the compactness result we obtained, we define the homogenized elastic energy and dissipation functional and prove lower semicontinuity result. In \Cref{section two-scale limit stresses} we characterize two-scale limit stresses, define the stress-strain two-scale duality and prove lower bound for the dissipation functional.
Finally, in \Cref{section two-scale quasistatic evolution} we define the limit two-scale quasistatic evolution and prove the convergence result as well as the regularity in time of the limit. 

\subsection{Phase decomposition} \label{section phase decomposition} 

Here we give basic assumptions on the material and consequently on the tensors. 
We also recall here some basic notation and assumptions from \cite{Francfort.Giacomini.2014}. 

Recall that $\calY = \R^2/\Z^2$ is the $2$-dimensional torus, let $Y := [0, 1)^2$ be its associated periodicity cell, and denote by $\calI : \calY \to Y$ their canonical identification.
We denote by $\calC$ the set 
\begin{equation} \label{torus boundary}
	\calC := \calI^{-1}(\partial Y).
\end{equation}
Using this canonical identification, we can consider
\begin{equation} \label{torus without the boundary}
	\mathring{Y} := (0 ,1)^2
\end{equation}
as a subset of $\calY$ (i.e., $\mathring{Y} \equiv \calY \setminus \calC$).

For any $\calZ \subset \calY$, we denote
\begin{equation} \label{periodic set notation}
	\calZ_\eps := \left\{ x \in \R^2 : \frac{x}{\eps} \in \Z^2+\calI(\calZ) \right\},
\end{equation}
and for any function $F : \calY \to X$ we associate the $\eps$-periodic function $F_\eps : \R^2 \to X$, given by
\begin{equation*}
	F_\eps(x) := F\left(y_\eps\right), \;\text{ for }\; \frac{x}{\eps}-\left\lfloor \frac{x}{\eps} \right\rfloor = \calI(y_\eps) \in Y.
\end{equation*}
With a slight abuse of notation we will also write $F_\eps(x) = F\left(\frac{x}{\eps}\right)$.

The torus $\calY$ is assumed to be made up of finitely many phases $\calY_i$ together with their interfaces. 
We assume that those phases are pairwise disjoint open sets with Lipschitz boundary.
Then we have $\calY = \bigcup_{i} \closure{\calY}_i$ and we denote the interfaces by 
\begin{equation*}
	\Gamma := \bigcup_{i,j} \partial\calY_i \cap \partial\calY_j.
\end{equation*}
We assume that there exists a compact set $S \subset \Gamma$ with $\calH^{1}(S) = 0$ such that
\begin{equation} \label{interface regularity condition}
	\Gamma \setminus S \quad \text{ is a $C^2$-hypersurface.}
\end{equation}
Furthermore, the interfaces are assumed to have a negligible intersection with the set $\calC$, i.e. for every $i$
\begin{equation} \label{transversality condition}
	\calH^{1}(\partial\calY_i \cap \calC) = 0.
\end{equation}
We will write
\begin{equation*}
	\Gamma := \bigcup_{i \neq j} \Gamma_{ij},
\end{equation*}
where $\Gamma_{ij}$ stands for the interface between $\calY_i$ and $\calY_j$.

We assume that $\omega$ is composed of the finitely many phases $(\calY_i)_\eps$. 
Note that, provided that $\eps>0$ is chosen such that $\calH^1(\cup_i(\partial \calY_i)_{\eps} \cap \partial \omega)=0$, then each point of $\Gamma_\Dir$ outside $\calH^2$ negligible set belongs to a well defined phase. 
Therefore $\Omega \cup \Gamma_\Dir$ is a geometrically admissible multi-phase domain in the sense of \cite[Subsection 1.2]{Francfort.Giacomini.2012}. 
Only these $\eps$'s will be considered from this point on. 

We say that a multi-phase torus $\calY$ is \emph{geometrically admissible} if it satisfies the above assumptions.

\medskip
\noindent{\bf Periodic elastic and plastic properties.} \nopagebreak

For every $i$, let $(\red{\C})_i$ and $(\Hkin)_i$ be a symmetric positive definite tensor on $\M^{2 \times 2}_{\sym}$ and $\M^{3 \times 3}_{\dev}$, respectively, and let $(\Hiso)_i$ be positive constant, such that there exist 
constants $r_{\C}$ and $R_{\C}$, with $0 < r_{\C} \leq R_{\C}$, 
as well as constants $r_{\Hkin}$, $r_{\Hiso}$, $R_{\Hkin}$ and $R_{\Hiso}$, with $0 \leq r_{\Hkin} \leq R_{\Hkin}$, $0 \leq r_{\Hiso} \leq R_{\Hiso}$ and $r_{\Hkin} + r_{\Hiso} > 0$,
satisfying
\begin{align} 
	& r_{\C} |\xi|^2 \leq (\red{\C})_i \xi : \xi \leq R_{\C} |\xi|^2 \quad \text{ for every }\xi \in \M^{2 \times 2}_{\sym}, \label{tensorassumption1}
	\\ & r_{\Hkin} |\zeta|^2 \leq (\Hkin)_i \zeta : \zeta \leq R_{\Hkin} |\zeta|^2 \quad \text{ for every }\zeta \in \M^{3 \times 3}_{\dev}, \label{tensorassumption2}
	\\ & r_{\Hiso} \leq (\Hiso)_i \leq R_{\Hiso}. \label{tensorassumption3}
\end{align}
Now let $\red{\C}$, $\Hkin$ and $\Hiso$ be considered as maps from $\calY$ taking corresponding values on each phase $\calY_i$,
i.e. $\red{\C}(y) := (\red{\C})_i$, $\Hkin(y) := (\Hkin)_i$ and $\Hiso(y) := (\Hiso)_i$ for every $y \in \calY_i$.

\medskip

We also assume there exist convex compact sets $K_i \in \M^{3 \times 3}_{\dev}$ for each phase $\calY_i$.
We further assume there exist two constants $r_K$ and $R_K$, with $0 < r_K \leq R_K$, such that for every $i$
\begin{equation*}
	\{ \xi \in \M^{3 \times 3}_{\dev} : |\xi| \leq r_K \} \subseteq K_i \subseteq\{ \xi \in \M^{3 \times 3}_{\dev}: |\xi| \leq R_K \}.
\end{equation*}
Finally, we define
\begin{equation*}
	K(y) := K_i, \quad \text{ for } y \in \calY_i.
\end{equation*}

For each $i$, let $\disspot_i : \M^{3 \times 3}_{\dev} \to [0,+\infty)$ be the support function of the set $K_i$, i.e.
\begin{equation*}
	\disspot_i(\xi) = \sup\limits_{\tau \in K_i} \tau : \xi.
\end{equation*}
It follows that $\disspot_i$ is convex, positively 1-homogeneous, and satisfies
\begin{equation} \label{coercivity of R_i}
	r_K |\xi| \leq \disspot_i(\xi) \leq R_K |\xi| \quad \text{for every}\, \xi \in \M^{3 \times 3}_{\dev}.
\end{equation}
Then we define the dissipation potential to be
\begin{equation*}
	\disspot(y, \xi) := \disspot_i(\xi),
\end{equation*}
for every $y \in \calY_i$.

\begin{remark} \label{remark on Reshetnyak theorem conditions - new disspot}
We point out that $\disspot$ is a Carath\'{e}odory function on $\cup_i\calY_i \times \M^{3 \times 3}_{\dev}$. 
Furthermore, for each $y \in \calY$, the function $\xi \mapsto \disspot(y, \xi)$ is positively 1-homogeneous and convex.
\end{remark}


\medskip
\noindent{\bf The reduced dissipation potential.} \nopagebreak

The set $\red{K}(y) \subset \M^{2 \times 2}_{\sym}$ represents the set of admissible stresses in the reduced problem and can be {\RED characterized \BLACK}as follows (see \cite[Section 3.2]{Davoli.Mora.2013}):
\begin{equation} \label{K_r characherization}
	\sigma \in \red{K}(y)
	\quad \iff \quad 
	\begin{pmatrix}
	\sigma_{11} & \sigma_{12} & 0 \\
	\sigma_{12} & \sigma_{22} & 0 \\
	0 & 0 & 0
	\end{pmatrix}
	- \frac{1}{3}(\tr \sigma)I_{3 \times 3} \in K(y),
\end{equation}
where $I_{3 \times 3}$ is the identity matrix in $\M^{3 \times 3}$.

The \emph{reduced perfectly-plastic dissipation potential} $\red{\disspot} : \cup_i\calY_i \times \M^{2 \times 2}_{\sym} \to [0,+\infty)$ is given by the support function of $\red{K}(y)$, i.e.
\begin{equation*}
	\red{\disspot}(y, p) := \sup\limits_{\sigma \in \red{K}(y)}\, \sigma : p \quad \text{for every}\, p \in \M^{2 \times 2}_{\sym}.
\end{equation*}
It follows that $\red{\disspot}(y,\cdot)$ is convex and positively 1-homogeneous, and there are two constants $0 < r_H \leq R_H$ such that 
\begin{equation*}
	r_H |p| \leq \red{\disspot}(y, p) \leq R_H |p| \quad \text{for every}\, p \in \M^{2 \times 2}_{\sym}.
\end{equation*}
Therefore $\red{\disspot}(y,\cdot)$ satisfies the triangle inequality 
\begin{equation*}
\red{\disspot}(y, p_1+p_2) \leq \red{\disspot}(y, p_1) + \red{\disspot}(y, p_2) \quad \text{for every}\, p_1,\, p_2 \in \M^{2 \times 2}_{\sym}.
\end{equation*}
For a fixed $y \in \cup_i\calY_i$, we can deduce the property
\begin{equation*}
	\red{K}(y) = \partial \red{\disspot}(y, 0).
\end{equation*}
{\RED 
By $\red{\disspot}_i$ and $\red{K}_i$ we denote 
$\red{\disspot}$ and $\red{K}$ restricted on $\calY_i$, respectively.
From \eqref{K_r characherization} we also have that (see again \cite[Section 3.2]{Davoli.Mora.2013})
\begin{equation} \label{reference1}
    \red{\disspot}_i(p) = \min_{\eta_1,\eta_2 \in \R} \disspot_i 
    \begin{pmatrix}
        p_{11} & p_{12} & \eta_1 \\
        p_{12} & p_{22} & \eta_2 \\
        \eta_1 & \eta_2 & -(p_{11}+p_{22})
    \end{pmatrix}.
\end{equation}\BLACK}\noindent
Since our limit plastic strain will be a measure we will need to define the plastic dissipation at the interface also. We will do this in \Cref{semicontinuity of the dissipation functional}. 

\subsection{A priori estimates for limiting admissible configurations with periodic microstructure} \label{A priori estimates with vanishing hardening}

In this section we recall the definition of the $\eps$-limiting quasistatic evolution in the context of vanishing hardening (see \Cref{limiting quasistatic evolution}) and prove the apriori estimates for the displacement and strains. 
Introducing periodic microstructure for the limiting quasistatic evolution derived in \Cref{limit of quasistatic evolutions - h to 0}, for $\eps > 0$ we consider the class
\begin{align*}
	\calA_{0}^{\rm hard,\eps}(w) := \Big\{ 
	& (u^\eps, e^\eps, p^\eps, \alpha^\eps) \in BD_{KL}(\Omega;\R^3) \times L^2(\Omega;\M^{2 \times 2}_{\sym}) \times L^2(\Omega;\M^{3 \times 3}_{\dev}) \times L^2(\Omega) :
	\\& {\E}_{x'}\bar{u}^\eps - x_3 {\D}^2_{x'}u_3^\eps = e^\eps + (p^\eps)^{1,2} \quad \text{ in } \Omega,
	\\& u^\eps = w \quad \calH^{2}\text{-a.e.}\text{ on } \Gamma_\Dir,
	\\& \disspot\!\left(\tfrac{x'}{\eps}, p^\eps(x)\right) \leq \alpha^\eps(x) \quad \text{a.e.} \text{ in } \Omega 
	\Big\},
\end{align*}
where $\bar{u}^\eps \in H^1(\omega;\R^2)$ and $u_3^\eps \in H^2(\omega)$ are the Kirchhoff--Love components of $u^\eps$.

For $(u^\eps, e^\eps, p^\eps, \alpha^\eps) \in \calA_{0}^{\rm hard,\eps}(w)$ we now define
\begin{align}
	\calQ^{\rm el,\eps}_{0}(e^\eps) & := \frac{1}{2} \int_{\Omega} \red{\C}\!\left(\tfrac{x'}{\eps}\right) e^\eps(x) : e^\eps(x) \,dx, \label{definition Q^el,eps_0}\\
	\calQ^{\rm hard,\eps}_{0}(p^\eps, \alpha^\eps) & := \frac{1}{2} \int_{\Omega} \Hkin\!\left(\tfrac{x'}{\eps}\right) p^\eps(x) : p^\eps(x) \,dx + \frac{1}{2} \int_{\Omega} \Hiso\!\left(\tfrac{x'}{\eps}\right) \alpha^\eps(x) \cdot \alpha^\eps(x) \,dx, \label{definition Q^hard,eps_0}
\end{align}
and
\begin{equation} \label{definition R^eps_0}
	\calR^{\eps}_{0}(p^\eps, \alpha^\eps) := \int_{\Omega} \gendisspot\!\left(\tfrac{x'}{\eps}, p^\eps(x), \alpha^\eps(x)\right) \,dx.
\end{equation}
As before we note that the condition $\disspot\!\left(\tfrac{x'}{\eps}, p^\eps(x)\right) \leq \alpha^\eps(x)$ a.e. in $\Omega$ implies that in fact $\calR^{\eps}_{0}(p^\eps, \alpha^\eps) = \int_{\Omega} \disspot\!\left(\tfrac{x'}{\eps}, p^\eps(x)\right) \,dx$.

The associated $\calR^{\eps}_{0}$-variation of a map 
$(p^\eps, \alpha^\eps) : [0, T] \to L^2(\Omega;\M^{3 \times 3}_{\dev}) \times L^2(\Omega)$ on $[a,b]$ is then defined as
\begin{equation*}
	\calV_{\calR^{\eps}_{0}}(p^\eps, \alpha^\eps; a, b) := \sup\left\{ \sum_{i = 1}^{n} \calR^{\eps}_{0}\!\left(p^\eps(t_{i+1}) - p^\eps(t_i), \alpha^\eps(t_{i+1}) - \alpha^\eps(t_i)\right) : a = t_1 < t_2 < \ldots < t_n = b,\ n \in \N \right\}.
\end{equation*}

Let $\delta(\eps) \to 0$ as $\eps \to 0$.
We now give the notion of the limiting quasistatic elasto-plastic evolution with periodic microstructure and vanishing hardening.

\begin{definition} \label{eps-limiting quasistatic evolution}
Let $\eps > 0$. 
A \emph{$\eps$-limiting quasistatic evolution} for the boundary datum $w(t)$ is a function $t \mapsto (u^\eps(t), e^\eps(t), p^\eps(t), \alpha^\eps(t))$ from $[0, T]$ into $H^1_{KL}(\Omega) \times L^2(\Omega;\M^{2 \times 2}_{\sym}) \times L^2(\Omega;\M^{3 \times 3}_{\dev}) \times L^2(\Omega)$ which satisfies the following conditions:
\begin{enumerate}[label=(qs\arabic*)$^{\eps}_{0}$]
	\item \label{eps-0-qs S} for every $t \in [0, T]$ we have $(u^\eps(t), e^\eps(t), p^\eps(t), \alpha^\eps(t)) \in \calA_{0}^{\rm hard,\eps}(w(t))$ and
	\begin{align*}
		\calQ^{\rm el,\eps}_{0}(e^\eps(t)) + \delta(\eps)\,\calQ^{\rm hard,\eps}_{0}(p^\eps(t), \alpha^\eps(t)) 
		\leq \calQ^{\rm el,\eps}_{0}(\eta) + \delta(\eps)\,\calQ^{\rm hard,\eps}_{0}(\pi, \beta) + \calR^{\eps}_{0}(\pi - p^\eps(t), \beta - \alpha^\eps(t)) 
		,
	\end{align*}
	for every $(\upsilon, \eta, \pi, \beta) \in \calA_{0}^{\rm hard,\eps}(w(t))$.
	\item \label{eps-0-qs E} 
	for every $t \in [0, T]$
	\begin{align*}
		& \calQ^{\rm el,\eps}_{0}(e^\eps(t)) + \delta(\eps)\,\calQ^{\rm hard,\eps}_{0}(p^\eps(t), \alpha^\eps(t)) 
		+ \calV_{\calR^{\eps}_{0}}(p^\eps, \alpha^\eps; 0, t) 
		\\& = \calQ^{\rm el,\eps}_{0}(e^\eps(0)) + \delta(\eps)\,\calQ^{\rm hard,\eps}_{0}(p^\eps(0), \alpha^\eps(0)) 
		+ \int_0^t \int_{\Omega} \red{\C}\!\left(\tfrac{x'}{\eps}\right) e^\eps(s) : {\E}\dot{w}(s) \,dx ds.
	\end{align*}
\end{enumerate}
\end{definition}

We recall that, as a consequence of \cite[Theorem 3.5.2]{Mielke.Roubicek.2015} we have that there exists a unique absolutely continuous solution $t \mapsto (u^{\eps}(t), e^{\eps}(t), p^{\eps}(t), \alpha^{\eps}(t)) $ of the problem given by \Cref{eps-limiting quasistatic evolution}. 
Next we give the a priori estimate. 
\begin{lemma} \label{lema apriori estimates} 
Let $t \mapsto (u^\eps(t), e^\eps(t), p^\eps(t), \alpha^\eps(t))$ for $t \in [0,T]$ be the solution according to \Cref{eps-limiting quasistatic evolution}. Then there exists $C>0$, independent of $\eps$ such that we have 
\begin{equation} \label{eps-boundness in time 1}
	\sup_{t \in [0,T]} \left\|e^\eps(t)\right\|_{L^2(\Omega;\M^{2 \times 2}_{\sym})} \leq C, \qquad \calV(p^\eps, 0, T) \leq C, \qquad \sup_{t \in [0,T]} \left\|u^\eps(t)\right\|_{BD(\Omega)}\leq C,
\end{equation}
\begin{equation} \label{eps-boundness in time 4}
	\sup_{t \in [0,T]} \left\|\sqrt{\delta(\eps)} p^\eps(t)\right\|_{L^2(\Omega;\M^{3 \times 3}_{\dev})} \leq C \;\text{ and }\; \sup_{t \in [0,T]} \left\|\sqrt{\delta(\eps)} \alpha^\eps(t)\right\|_{L^2(\Omega)} \leq C,
\end{equation}
\end{lemma}

\begin{proof} 
Firstly, we can prove that that there exists a constant $C$, depending only on the initial and boundary data, such that the first inequality in \eqref{eps-boundness in time 1} is valid as well as \eqref{eps-boundness in time 4} and 
\begin{equation} \label{eps-boundness in time 111}
	\calV_{\calR^{\eps}_{0}}(p^\eps, \alpha^\eps; 0, T) \leq C.
\end{equation}
Indeed, the energy balance of the $\eps$-limiting quasistatic evolution \ref{eps-0-qs E}, the estimate $\delta(\eps) \ll 1$ and coercivity of the quadratic energies $\calQ^{\rm el,\eps}_{0}$ and $\calQ^{\rm hard,\eps}_{0}$ imply
\begin{align*}
	& r_{\C} \left\|e^\eps(t)\right\|_{L^2(\Omega;\M^{2 \times 2}_{\sym})}^2 + \delta(\eps) r_{\Hkin} \left\|p^\eps(t)\right\|_{L^2(\Omega;\M^{3 \times 3}_{\dev})} + \delta(\eps) r_{\Hiso} \left\|\alpha^\eps(t)\right\|_{L^2(\Omega)} + \calV_{\calR^{\eps}_{0}}(p^\eps, \alpha^\eps; 0, t) \\
	& \leq R_{\C} \left\|e^\eps(0)\right\|_{L^2(\Omega;\M^{2 \times 2}_{\sym})}^2 + R_{\Hkin} \left\|p^\eps(0)\right\|_{L^2(\Omega;\M^{3 \times 3}_{\dev})}^2 + R_{\Hiso} \left\|\alpha^\eps(0)\right\|_{L^2(\Omega)}^2 
	\\& + 2 R_{\C} \sup_{t \in [0,T]} \left\|e^\eps(t)\right\|_{L^2(\Omega;\M^{2 \times 2}_{\sym})} \int_0^T \left\|{\E}\dot{w}(s)\right\|_{L^2(\Omega;\M^{3 \times 3}_{\sym})} \,ds.
\end{align*}
The claim now follows by the Cauchy-Schwarz inequality.
for every $t \in [0,T]$, which by the triangle inequality implies the second inequality in \eqref{eps-boundness in time 1}.
By using
\eqref{coercivity of R_i}
we can conclude that 
\begin{equation} \label{eps-boundness in time 3}
	\calV(p^\eps; 0, T) \leq C.
\end{equation}
	
Next, using a variant of Poincar\'{e}--Korn's inequality (see \cite[Chapter II, Proposition 2.4]{Temam.1985}) and the fact $(u^\eps(t), e^\eps(t), p^\eps(t), \alpha^\eps(t)) \in \calA_{0}^{\rm hard,\eps}(w(t))$, we can conclude that, for every $h > 0$ and $t \in [0,T]$,
\begin{align*}
	\left\|u^\eps(t)\right\|_{BD(\Omega)} & \leq C \left(\left\|u^\eps(t)\right\|_{L^1(\Gamma_\Dir;\R^3)} + \left\|{\E}u^\eps(t)\right\|_{L^1(\Omega;\M^{3 \times 3}_{\sym})}\right)\\
	& \leq C \left(\left\|w(t)\right\|_{L^1(\Gamma_\Dir;\R^3)} + \left\|e^\eps(t)\right\|_{L^2(\Omega;\M^{2 \times 2}_{\sym})} + \left\|(p^\eps(t))^{1,2}\right\|_{L^1(\Omega;\M^{2 \times 2}_{\sym})}\right)\\
	& \leq C \left(\left\|w(t)\right\|_{L^2(\Omega;\R^3)} + \left\|e^\eps(t)\right\|_{L^2(\Omega;\M^{2 \times 2}_{\sym})} + \left\|p^\eps(t)\right\|_{L^1(\Omega;\M^{3 \times 3}_{\dev})}\right).
\end{align*}
In view of the assumption on $w \in H^1(\Omega;\R^3)$ and from the first two inequalities in \eqref{eps-boundness in time 1} it follows that the sequences $\{u^\eps(t)\}$ are bounded in $BD(\Omega)$ uniformly with respect to $t$.
\end{proof}

Note that the boundedness of $u^{\eps}(t)$ in $BD(\Omega)$ is equivalent to the boundedness of $\bar{u}^{\eps}$ in $BD(\omega)$ and $u_3^{\eps}$ in $BH(\omega)$. 

\subsection{Two-scale limits and corrector spaces} \label{section two-scale limits} 

In this section we prove the basic compactness results with respect to two-scale convergence that follow from a priori estimates. 
The results obtained here motivate the definition of the reduced dissipation functional at the interface (see \eqref{definition R^hom_0}) and serve as a starting point for proving lower semicontinuity of the dissipation functional, which is done in \Cref{semicontinuity of the dissipation functional}. 
In \Cref{section two-scale convergence of measures} we recall the notion of two-scale convergence of measures. 
In \Cref{section two-scale limits and correctors} we recall the definition of corrector spaces for sequences with bounded symmetric gradients and bounded Hessians introduced in \cite[Section 4.1.2]{Buzancic.Davoli.Velcic.2024.2nd} and give the corresponding compactness statements.
We also introduce a new corrector space which will be used in \Cref{section unfolding for sequances}, where we introduce the unfolding for measures and prove new compactness results for sequences with bounded symmetric gradients and bounded Hessians. 
Unlike the compactness statements in \Cref{section two-scale limits and correctors} the ones obtained here contain the information how limit two-scale measures behave at the interface (more precisely at the boundary of the regular enough subset of $\calY$).

\subsubsection{Two-scale convergence of measures} \label{section two-scale convergence of measures}
We briefly recall some results and definitions from \cite[Section 4.1]{Francfort.Giacomini.2014}. 

\begin{definition}
\label{def:2-scale-meas}
Let $\omega \subset \R^2$ be an open set, $\Omega=\omega$ or $\Omega=\omega \times I$.
Let $\{\mu^\eps\}_{h>0}$ be a family in $\Mb(\Omega)$ and consider $\mu \in \Mb(\Omega \times \calY)$. 
We say that
\begin{equation*}
	\mu^\eps \weakstartwoscale \mu \quad \text{two-scale weakly* in }\Mb(\Omega \times \calY),
\end{equation*}
if for every $\chi \in C_0(\Omega \times \calY)$
\begin{equation*}
	\lim_{\eps \to 0} \int_{\Omega} \chi\left(x,\frac{x'}{\eps}\right) \,d\mu^\eps(x) = \int_{\Omega \times \calY} \chi(x,y) \,d\mu(x',y).
\end{equation*}
The convergence above is called \emph{two-scale weak* convergence}.
\end{definition}

\begin{remark} \label{transfertwoscale}
Notice that the family $\{\mu^\eps\}_{h>0}$ determines the family of measures $\{\nu^\eps\}_{h>0} \subset \Mb(\omega \times \calY)$ obtained by setting
\[
	\int_{\Omega \times \calY} \chi(x',y)\,d\nu^\eps(x',y) =\int_{\Omega} \chi \left(x',\frac{x'}{\eps}\right) \,d\mu^\eps (x)
\]
for every $\chi \in C_0(\Omega \times \calY)$. Thus $\mu$ is simply the weak* limit in $\Mb(\Omega \times \calY)$ of $\{\nu^\eps\}_{h>0}$. 
\end{remark} 

We collect some basic properties of two-scale convergence below:

\begin{proposition} \label{proposition properties two-scale convergence} 
The following statements hold:
\begin{enumerate}[label=(\roman*)]
	\item
	Any sequence that is bounded in $\Mb(\Omega)$ admits a two-scale weakly* convergent subsequence.
	\item 
	Let $\calD \subset \calY$ and assume that 
	$\supp(\mu^\eps) \subset \Omega \cap \calD_\eps$.
	If $\mu^\eps \weakstartwoscale \mu$ two-scale weakly* in $\Mb(\Omega \times \calY)$, then $\supp(\mu) \subset \Omega \times \closure{\calD}$.
\end{enumerate}
\end{proposition}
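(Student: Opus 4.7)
For part (i), the plan is to leverage the correspondence from Remark~\ref{transfertwoscale}. Given $\{\mu^\eps\}_{\eps > 0}$ bounded in $\Mb(\omega)$, I would associate to it the family $\{\nu^\eps\} \subset \Mb(\omega \times \calY)$ defined by
$$
    \int_{\omega \times \calY} \chi \,d\nu^\eps := \int_{\omega} \chi\!\left(x', \tfrac{x'}{\eps}\right) \,d\mu^\eps(x') \qquad \text{for all } \chi \in C_0(\omega \times \calY).
$$
Taking the supremum over $\chi$ with $\|\chi\|_\infty \leq 1$ immediately yields $\|\nu^\eps\|_{\Mb(\omega \times \calY)} \leq \|\mu^\eps\|_{\Mb(\omega)}$, so the $\nu^\eps$ inherit the uniform bound. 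Since $\Mb(\omega \times \calY)$ is the dual of the separable Banach space $C_0(\omega \times \calY)$, the Banach--Alaoglu theorem produces a weakly* convergent subsequence, whose limit $\mu$ is by construction the desired two-scale weak* limit of $\mu^\eps$.

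For part (ii), I would proceed by duality. Let $\chi$ be a continuous function with compact support contained in $\omega \times (\calY \setminus \closure{\calD})$, viewed as an element of $C_0(\omega \times \calY)$ by extension by zero, so that $\chi$ vanishes on $\omega \times \closure{\calD}$. By the definition of the rescaling in \eqref{periodic set notation}, $x' \in \calD_\eps$ means exactly that $x'/\eps$ projects to a point of $\calD \subset \closure{\calD}$ on the torus $\calY$. Hence $\chi(x', x'/\eps) = 0$ for every $x' \in \calD_\eps$, and in particular on $\supp(\mu^\eps) \subset \omega \cap \calD_\eps$. Therefore
$$
    \int_{\omega} \chi\!\left(x', \tfrac{x'}{\eps}\right) \,d\mu^\eps(x') = 0 \qquad \text{for every } \eps > 0,
$$
and passing to the two-scale weak* limit yields $\int_{\omega \times \calY} \chi \,d\mu = 0$. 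Since $\chi$ was arbitrary with support avoiding $\omega \times \closure{\calD}$, this forces $\supp(\mu) \subset \omega \times \closure{\calD}$.

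\emph{Main obstacle.} Neither part presents a serious difficulty: both items reduce to standard facts about weak* compactness and support preservation for Radon measures, made meaningful only by the periodic rescaling encoded in the two-scale setting. The only point requiring care is the evaluation of $\chi(x', x'/\eps)$ on $\supp(\mu^\eps)$ in part (ii), but this follows immediately from the definition of $\calD_\eps$ in \eqref{periodic set notation}, which identifies $\calD_\eps$ as the preimage of $\calI(\calD)$ under the fractional-part map $x' \mapsto x'/\eps - \lfloor x'/\eps \rfloor$.
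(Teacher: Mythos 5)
Your proof is correct. The paper itself states this proposition without proof (it is recalled from Francfort--Giacomini, Section 4.1, and the identification of two-scale convergence with weak* convergence of the measures $\nu^\eps$ is exactly the paper's preceding remark), and your argument --- the total-variation bound $\|\nu^\eps\|\le\|\mu^\eps\|$ plus sequential Banach--Alaoglu for (i), and testing against $\chi\in C_0$ supported in $\omega\times(\calY\setminus\closure{\calD})$ for (ii) --- is precisely the standard reasoning behind that citation.
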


\subsubsection{Two-scale limit results for symmetrized gradients and Hessians} \label{section two-scale limits and correctors}
Here we will recall the corrector spaces 
related to spaces of measures which arise as two-scale limits of symmetrized gradients and Hessians in $BD$ and $BH$ spaces, respectively. 

Set
\begin{align*}
	\calXzero{\omega} := \Big\{\mu \in \Mb(\omega \times \calY;\R^2) : {\E}_{y}\mu \in \Mb(\omega \times \calY;\M^{2 \times 2}_{\sym}),& \\
	\mu(F \times \calY) = 0 \textrm{ for every Borel set } F \subseteq \omega & \Big\},
\end{align*}
\begin{align*}
	\calYzero{\omega} := \Big\{\kappa \in \Mb(\omega \times \calY) : {\D}^2_{y}\kappa \in \Mb(\omega \times \calY;\M^{2 \times 2}_{\sym}),& \\
	\kappa(F \times \calY) = 0 \textrm{ for every Borel set } F \subseteq \omega & \Big\}{\RED.\BLACK}
\end{align*}

In \cite[Section 4.1.2]{Buzancic.Davoli.Velcic.2024.2nd}, the spaces $\calXzero{\omega}$ and $\calYzero{\omega}$ can be characterized through disintegration and duality arguments.
Furthermore, $\calXzero{\omega}$ is the $2$-dimensional variant of the set introduced in \cite[Section 4.2]{Francfort.Giacomini.2014}, where the following structure result for two-scale weak* limits of symmetrized
gradients of functions was given.

\begin{proposition} \label{two-scale limit of symmetrized gradients}
Let $\{v^\eps\}_{\eps>0}$ be a bounded family in $BD(\omega)$ such that
\begin{equation*}
	v^\eps \weakstar v \quad \text{weakly* in $BD(\omega)$},
\end{equation*}
for some $v \in BD(\omega)$. 
Then there exists $\mu \in \calXzero{\omega}$ such that (on a further subsequence)
\begin{equation*}
	{\E}v^\eps \weakstartwoscale {\E}_{x'} v \otimes \calL^{2}_{y} + {\E}_{y}\mu \quad \text{two-scale weakly* in $\Mb(\omega \times \calY;\M^{2 \times 2}_{\sym})$}.
\end{equation*}
\end{proposition}

Similarly, we can prove the following structure result for two-scale weak* limits of Hessians:

\begin{proposition} \label{two-scale limit of Hessians}
Let $\{v^\eps\}_{\eps>0}$ be a bounded family in $BH(\omega)$ such that
\begin{equation*}
	v^\eps \weakstar v \quad \text{weakly* in $BH(\omega)$},
\end{equation*}
for some $v \in BH(\omega)$. 
Then there exists $\kappa \in \calYzero{\omega}$ such that (on a further subsequence)
\begin{equation*}
	{\D}^2v^\eps \weakstartwoscale {\D}^2_{x'}v \otimes \calL^{2}_{y} + {\D}^2_{y}\kappa \quad \text{two-scale weakly* in $\Mb(\omega \times \calY;\M^{2 \times 2}_{\sym})$}.
\end{equation*}
\end{proposition}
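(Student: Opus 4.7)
The plan is to adapt the strategy of Proposition \ref{two-scale limit of symmetrized gradients}, but to use Proposition \ref{duality lemma 2 - regime zero} in place of Proposition \ref{duality lemma 1 - regime zero}. Boundedness of $\{v^h\}$ in $BH(\omega)$ forces $\{\D^2 v^h\}$ to be bounded in $\Mb(\omega;\M^{2\times 2}_\sym)$, so up to a subsequence there is $\lambda\in\Mb(\omega\times\calY;\M^{2\times 2}_\sym)$ with $\D^2 v^h \weakstartwoscale \lambda$. To realize $\lambda-\D^2 v\otimes\calL^{2}_{y} = \D^2_y\kappa$ for some $\kappa\in\calYzero{\omega}$, by Proposition \ref{duality lemma 2 - regime zero} it is enough to show
\[
    \int_{\omega\times\calY}\chi:d\lambda \;=\; \int_{\omega}\overline{\chi}:d\D^2 v, \qquad \overline{\chi}(x'):=\int_\calY \chi(x',y)\,dy,
\]
for every $\chi\in C_c^\infty(\omega\times\calY;\M^{2\times 2}_\sym)$ with $\div_y\div_y\chi = 0$; the extension to $C_0$ is by density.

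Because $v^h\in BH(\omega)\hookrightarrow W^{1,1}(\omega)$ and $\chi(\cdot,\cdot/\eps)$ is compactly supported in $\omega$, a single integration by parts gives
\[
    \int_\omega \chi\!\left(x',\tfrac{x'}{\eps}\right):d\D^2 v^h \;=\; -\int_\omega \nabla v^h\cdot\!\left[(\div_{x'}\chi)\!\left(x',\tfrac{x'}{\eps}\right) + \tfrac{1}{\eps}(\div_y\chi)\!\left(x',\tfrac{x'}{\eps}\right)\right]dx'.
\]
By the Rellich--Kondrachov embedding for $BH$, $\nabla v^h\strong\nabla v$ strongly in $L^1(\omega;\R^2)$, and since $(\div_{x'}\chi)(\cdot,\cdot/\eps)\weakstar\div_{x'}\overline{\chi}$ weakly$^*$ in $L^\infty$, the first summand on the right passes to $-\int_\omega \nabla v\cdot\div_{x'}\overline{\chi}\,dx' = \int_\omega\overline{\chi}:d\D^2 v$. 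Hence the entire question reduces to showing that the $1/\eps$ remainder tends to zero.

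Here the constraint $\div_y\div_y\chi=0$ is decisive: the vector field $\psi(x',\cdot):=\div_y\chi(x',\cdot)$ is divergence-free on the torus $\calY$ and has zero $y$-average. On the 2-dimensional torus, Hodge theory then yields $\phi\in C^\infty(\omega\times\calY)$, compactly supported in $\omega$ uniformly in $y$, such that $\psi = \nabla_y^\perp\phi$ with $\nabla_y^\perp:=(-\partial_{y_2},\partial_{y_1})$. The chain rule
\[
    \tfrac{1}{\eps}\nabla_y^\perp\phi\!\left(x',\tfrac{x'}{\eps}\right) \;=\; \nabla_{x'}^\perp\!\left[\phi\!\left(x',\tfrac{x'}{\eps}\right)\right] - \nabla_{x'}^\perp\phi\!\left(x',\tfrac{x'}{\eps}\right)
\]
splits the $1/\eps$ remainder into
\[
    -\int_\omega \nabla v^h\cdot\nabla_{x'}^\perp\!\left[\phi\!\left(x',\tfrac{x'}{\eps}\right)\right]\,dx' \;+\; \int_\omega \nabla v^h\cdot\nabla_{x'}^\perp\phi\!\left(x',\tfrac{x'}{\eps}\right)\,dx'.
\]
The first integral vanishes identically, since $\nabla v^h$ is curl-free and $x'\mapsto\phi(x',x'/\eps)$ lies in $C_c^\infty(\omega)$. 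The second integral passes to the limit, again by $L^1$-strong convergence of $\nabla v^h$ and weak$^*$ $L^\infty$-convergence of the periodic oscillation, to $\int_\omega \nabla v\cdot\nabla_{x'}^\perp\overline{\phi}(x')\,dx' = 0$ by the curl-freeness of $\nabla v$.

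The main obstacle, and the place where this proof genuinely departs from the BD analogue, is the $1/\eps$ cross-derivative remainder: a single integration by parts against a Hessian does not immediately remove it, unlike in Proposition \ref{two-scale limit of symmetrized gradients}. The resolution uses the additional fact that $\nabla v^h$ is a gradient (hence curl-free), which, combined with the torus Hodge potential, forces the problematic term to vanish in the limit.
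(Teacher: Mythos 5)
Your proof is correct, and it identifies the two-scale limit by a genuinely different route than the paper's. Both arguments share the same skeleton: compactness of two-scale convergence, the strong $W^{1,1}(\omega)$ convergence of $v^h$ coming from the $BH$ bound, testing against $\chi$ with $\div_{y}\div_{y}\chi=0$, and finally \Cref{duality lemma 2 - regime zero} plus a density step to produce $\kappa$ — but they dispose of the oscillatory remainder differently. The paper integrates by parts twice, so that all derivatives fall on $\chi(x',x'/\eps)$: the constraint kills the $\eps^{-2}$ term, while the $\eps^{-1}$ cross terms are rewritten as an exact $x'$-divergence (which integrates to zero by Green's theorem) plus terms in $v^h$ and $\nabla v^h$ that pass to the limit and are then integrated back by parts. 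You integrate by parts only once, so no $\eps^{-2}$ term ever appears, and you remove the single $\eps^{-1}$ remainder structurally: $\div_y\chi(x',\cdot)$ is $y$-divergence-free with zero mean on the torus, hence of the form $\nabla_y^\perp\phi$, and pairing with the curl-free field $\nabla v^h$ annihilates the exact piece $\nabla_{x'}^\perp\big[\phi(x',x'/\eps)\big]$, while the leftover piece converges (strong $L^1$ against weak* $L^\infty$) to $\int_\omega \nabla v\cdot\nabla_{x'}^\perp\big(\int_{\calY}\phi(\cdot,y)\,dy\big)\,dx'=0$ for the same reason. The paper's computation is elementary, needs no auxiliary potential, and is dimension-independent; yours is shorter and makes the mechanism transparent (curl-freeness of gradients versus the double-divergence constraint), at the price of the Hodge-type step, which deserves a sentence of justification: $\phi(x',\cdot)$ is the zero-mean solution of $\Delta_y\phi=\partial_{y_1}(\div_y\chi)_2-\partial_{y_2}(\div_y\chi)_1$, so by Fourier series or elliptic regularity it is jointly smooth and vanishes for $x'$ outside the $x'$-support of $\chi$; note also that the $\nabla_y^\perp$ representation is specific to the two-dimensional torus (in higher dimensions one would use an antisymmetric matrix potential, with the same curl-free cancellation). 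Both your argument and the paper's rely on the same, only sketched, density passage from smooth compactly supported constrained test functions to $C_0$ ones, so that is not an additional gap on your side.
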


\begin{proof}
By compactness, the exists $\lambda \in \Mb(\omega \times \calY;\M^{2 \times 2}_{\sym})$ such that (up to a subsequence)
\begin{equation*}
	{\D}^2v^\eps \weakstartwoscale \lambda \quad \text{two-scale weakly* in $\Mb(\omega \times \calY;\M^{2 \times 2}_{\sym})$}.
\end{equation*}
Since $v^\eps \strong v$ strongly in $W^{1,1}(\omega)$, we have
\begin{align*}
	& v^\eps \weakstartwoscale v(x') \,\calL^{2}_{x'} \otimes \calL^{2}_{y} \quad \text{two-scale weakly* in $\Mb(\omega \times \calY)$},\\
	& \partial_{x_\alpha}v^\eps \weakstartwoscale \partial_{x_\alpha}v(x') \,\calL^{2}_{x'} \otimes \calL^{2}_{y} \quad \text{two-scale weakly* in $\Mb(\omega \times \calY)$}, \quad \alpha=1,2.
\end{align*}
Consider $\chi \in C_c^{\infty}(\omega \times \calY;\M^{2 \times 2}_{\sym})$ such that $\div_{y}\div_{y}\chi(x',y) = 0$. 
Then
{\allowdisplaybreaks
\begin{align*}
	& \lim_{\eps\to0} \int_{\omega} \chi\left(x',\frac{x'}{\eps}\right) : {\D}^2v^\eps(x') \,dx'\\*
	& = \lim_{\eps\to0} \int_{\omega} v^\eps(x')\,\div_{x'}\div_{x'}\left(\chi\left(x',\frac{x'}{\eps}\right)\right) \,dx'\\ 
	& = \lim_{\eps\to0} \sum_{\alpha,\beta=1,2} \int_{\omega} v^\eps(x')\,\bigg( \partial_{x_\alpha x_\beta}\chi_{\alpha \beta}\left(x',\frac{x'}{\eps}\right) + \frac{1}{\eps}\partial_{y_\alpha x_\beta}\chi_{\alpha \beta}\left(x',\frac{x'}{\eps}\right)\\*\nonumber& \hspace{10em} + \frac{1}{\eps}\partial_{x_\alpha y_\beta}\chi_{\alpha \beta}\left(x',\frac{x'}{\eps}\right) + \frac{1}{\eps^2}\partial_{y_\alpha y_\beta}\chi_{\alpha \beta}\left(x',\frac{x'}{\eps}\right) \bigg) \,dx'\\
	& = \lim_{\eps\to0} \sum_{\alpha,\beta=1,2} \int_{\omega} v^\eps(x')\,\bigg( \partial_{x_\alpha x_\beta}\chi_{\alpha \beta}\left(x',\frac{x'}{\eps}\right) + \frac{2}{\eps}\partial_{y_\alpha x_\beta}\chi_{\alpha \beta}\left(x',\frac{x'}{\eps}\right) \bigg) \,dx'\\
	& = \lim_{\eps\to0} \sum_{\alpha,\beta=1,2} \int_{\omega} v^\eps(x')\,\partial_{x_\alpha x_\beta}\chi_{\alpha \beta}\left(x',\frac{x'}{\eps}\right) \,dx' + 2 \int_{\omega} \bigg( \partial_{x_\alpha}\left( v^\eps(x')\,\partial_{x_\beta}\chi_{\alpha \beta}\left(x',\frac{x'}{\eps}\right) \right)\\*\nonumber& \hspace{10em} - \partial_{x_\alpha}v^\eps(x')\,\partial_{x_\beta}\chi_{\alpha \beta}\left(x',\frac{x'}{\eps}\right) - v^\eps(x')\,\partial_{x_\alpha x_\beta}\chi_{\alpha \beta}\left(x',\frac{x'}{\eps}\right) \bigg) \,dx'\\*
	& = \lim_{\eps\to0} \sum_{\alpha,\beta=1,2} \left\{- \int_{\omega} v^\eps(x')\,\partial_{x_\alpha x_\beta}\chi_{\alpha \beta}\left(x',\frac{x'}{\eps}\right) \,dx' - 2 \int_{\omega} \partial_{x_\alpha}v^\eps(x')\,\partial_{x_\beta}\chi_{\alpha \beta}\left(x',\frac{x'}{\eps}\right) \,dx'\right\},
\end{align*}
}\noindent
where in the last equality we used Green's theorem. 
Passing to the limit, we have
{\allowdisplaybreaks
\begin{align}
	\nonumber & \lim_{\eps\to0} \int_{\omega} \chi\left(x',\frac{x'}{\eps}\right) : {\D}^2v^\eps(x') \,dx'\\*
	\nonumber & = \sum_{\alpha,\beta=1,2} - \int_{\omega \times \calY} v(x')\,\partial_{x_\alpha x_\beta}\chi_{\alpha \beta}\left(x',y\right) \,dx' dy - 2 \int_{\omega \times \calY} \partial_{x_\alpha}v(x')\,\partial_{x_\beta}\chi_{\alpha \beta}\left(x',y\right) \,dx' dy,\\
	\nonumber & = \sum_{\alpha,\beta=1,2} - \int_{\omega \times \calY} v(x')\,\partial_{x_\alpha x_\beta}\chi_{\alpha \beta}\left(x',y\right) \,dx' dy \\*\nonumber& \hspace{4em}- 2 \int_{\omega \times \calY} \bigg( \partial_{x_\alpha}\left( v(x')\,\partial_{x_\beta}\chi_{\alpha \beta}\left(x',y\right) \right) - v(x')\,\partial_{x_\alpha x_\beta}\chi_{\alpha \beta}\left(x',y\right) \bigg) \,dx' dy,\\
	\nonumber & = \sum_{\alpha,\beta=1,2} \int_{\omega \times \calY} v(x')\,\partial_{x_\alpha x_\beta}\chi_{\alpha \beta}\left(x',y\right) \,dx' dy\\
	\nonumber & = \int_{\omega \times \calY} v(x')\,\div_{x'}\div_{x'}\left(\chi\left(x',y\right)\right) \,dx'\\* 
	& = \int_{\omega \times \calY} \chi(x',y) : d\left({\D}^2_{x'}v \otimes \calL^{2}_{y} \right).
\end{align}
}\noindent
By a density argument, we infer that
\begin{equation*}
	\int_{\omega \times \calY} \chi(x',y) : d\left(\lambda(x',y) - {\D}^2_{x'}v \otimes \calL^{2}_{y} \right) = 0,
\end{equation*}
for every $\chi \in C_0(\omega \times \calY;\M^{2 \times 2}_{\sym})$ with $\div_{y}\div_{y}\chi(x',y) = 0$ (in the sense of distributions). 
From this and 
the duality argument given in \cite[Proposition 4.8]{Buzancic.Davoli.Velcic.2024.2nd},
we conclude that there exists $\kappa \in \calYzero{\omega}$ such that 
\begin{equation*}
	\lambda - {\D}^2_{x'}v \otimes \calL^{2}_{y} = {\D}^2_{y}\kappa.
\end{equation*}
This implies the claim.
\end{proof}

Recalling $\mathring{Y}$ given in \eqref{torus without the boundary}, we introduce the following corrector space:
\begin{align*}
	\calZzero{\omega} := \Big\{\delta \in \Mb(\omega \times \mathring{Y}) : {\D}^2_{y}\delta \in \Mb(\omega \times \mathring{Y};\M^{2 \times 2}_{\sym}),& \\
	\delta(F \times \mathring{Y}) = 0 \textrm{ for every Borel set } F \subseteq \omega & \Big\}.
\end{align*}
As the notation suggests, this space can be seen as a variant of the space $\calYzero{\omega}$, and we can use the same arguments from \cite[Section 4.1]{Buzancic.Davoli.Velcic.2024.1st} to infer the following disintegration result. The analogous claims can be concluded for the spaces $\calXzero\omega$ and $\calYzero\omega$, but we will not use them here. 

\begin{proposition} \label{corrector main property - regime zero without boundary of Y}
Let $\delta \in \calZzero{\omega}$. 
Then there exist $\eta \in \Mb^+(\omega)$ and a Borel map $(x',y) \in \omega \times \mathring{Y} \mapsto \delta_{x'}(y) \in \R$ such that, for $\eta$-a.e. $x' \in \omega$,
\begin{align*}
	\delta_{x'} \in BH(\mathring{Y}), \qquad \int_{\mathring{Y}} \delta_{x'}(y) \,dy = 0, \qquad |{\D}^2_{y}\delta_{x'}|(\mathring{Y}) \neq 0,
\end{align*}
and
\begin{align*}
	\delta = \delta_{x'}(y) \,\eta \otimes \calL^{2}_{y}.
\end{align*}
Moreover, the map $x' \mapsto {\D}^2_{y}\delta_{x'} \in \Mb(\calY;\M^{2 \times 2}_{\sym})$ is $\eta$-measurable and
\begin{align*}
	{\D}^2_{y}\delta = \eta \genprod {\D}^2_{y}\delta_{x'}.
\end{align*}
\end{proposition}

\subsubsection{Unfolding for sequences in spaces of functions with measures as derivatives} \label{section unfolding for sequances}
We introduce the unfolding for measures. 
For every $\eps > 0$ and $i \in \Z^2$, let
\begin{equation*}
	Q_\eps^i := \left\{ x' \in \R^2 : \frac{x'-\eps i}{\eps} \in Y \right\}.
\end{equation*}
Given an open set $\omega \subseteq \R^2$, we will set
\begin{equation}\label{cubes inside} 
	I_\eps(\omega) := \left\{ i \in \Z^2 : Q_\eps^i \subset \omega \right\}.
\end{equation}
Given $\mu_\eps \in \Mb(\omega)$ and $Q_\eps^i \subset \omega$, we define $\mu_\eps^i \in \Mb(\calY)$ such that
\begin{equation*}
	\int_{\calY} \psi(y) \,d\mu_\eps^i(y) = \frac{1}{\eps^2} \int_{Q_\eps^i} \psi\left(\frac{x'}{\eps}\right) \,d\mu_\eps(x'), \quad \psi \in C(\calY).
\end{equation*}
\begin{definition}
Given $\eps > 0$, \emph{the unfolding measure} associated with $\mu_\eps$ is the measure $\tilde{\lambda}_\eps \in \Mb(\omega \times \calY)$ defined by
\begin{equation*}
	\tilde{\lambda}_\eps := \sum_{i \in I_\eps(\omega)} \left( \calL_{x'}^2\mres{Q_\eps^i} \right) \otimes \mu_\eps^i.
\end{equation*}
\end{definition}

The following proposition gives the relationship between the two-scale weakly* convergence and unfolding measures.
The proof is given in \cite[Proposition 4.11.]{Francfort.Giacomini.2014}.

\begin{proposition} \label{unfolding measure weak* convergence}
Let $\omega \subseteq \R^2$ be an open set and let $\{\mu_\eps\} \subset \Mb(\omega)$ be a bounded family such that
\begin{equation*}
	\mu_\eps \weakstartwoscale \mu_0 \quad \text{two-scale weakly* in $\Mb(\omega \times \calY)$}.
\end{equation*}
Let $\{\tilde{\lambda}_\eps\} \subset \Mb(\omega \times \calY)$ be the associated family of unfolding measure with $\{\mu_\eps\}$. 
Then 
\begin{equation*}
	\tilde{\lambda}_\eps \weakstar \mu_0 \quad \text{weakly* in $\Mb(\omega \times \calY)$}.
\end{equation*}
Also the converse statement is valid, i.e., if for a bounded family of measures its associated family of unfolding measures converge weakly*, then the family of measures converges two-scale weakly*.
\end{proposition}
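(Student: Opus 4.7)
The plan is to establish weak* convergence by testing $\tilde{\lambda}_\eps$ against an arbitrary $\psi \in C_c(\omega \times \calY)$ (a dense subclass) and comparing $\int \psi \, d\tilde{\lambda}_\eps$ with $\int_\omega \psi(x', x'/\eps) \, d\mu_\eps$, which by hypothesis tends to $\int \psi \, d\mu_0$. First I would observe the mass bound $|\tilde{\lambda}_\eps|(\omega \times \calY) \leq |\mu_\eps|(\omega)$ directly from the construction, which gives precompactness in the weak* topology and reduces the task to identifying the limit uniquely.

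Second, I would unfold the definitions to rewrite
\begin{equation*}
    \int_{\omega \times \calY} \psi \, d\tilde{\lambda}_\eps = \sum_{i \in I_\eps(\omega)} \int_{Q_\eps^i} \frac{1}{\eps^2} \int_{Q_\eps^i} \psi\!\left(x', \tfrac{z'}{\eps}\right) d\mu_\eps(z') \, dx',
\end{equation*}
and then swap the order of integration. Since each cell $Q_\eps^i$ has diameter $\eps\sqrt{2}$, the uniform continuity of $\psi$ allows me to replace $\psi(x', z'/\eps)$ by $\psi(z', z'/\eps)$ on each cell; the total error is controlled by the modulus of continuity of $\psi$ evaluated at $\eps\sqrt{2}$ times $|\mu_\eps|(\omega)$, which vanishes as $\eps \to 0$.

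Third, after this substitution the sum collapses to $\int_{\bigcup_i Q_\eps^i} \psi(z', z'/\eps) \, d\mu_\eps(z')$. Since $\supp\psi$ is compactly contained in $\omega \times \calY$, for $\eps$ small enough the boundary layer $\omega \setminus \bigcup_{i \in I_\eps(\omega)} Q_\eps^i$ lies outside the $x'$-projection of $\supp\psi$, so this integral agrees with $\int_\omega \psi(z', z'/\eps) \, d\mu_\eps(z')$, which converges to $\int \psi \, d\mu_0$ by the definition of two-scale weak* convergence. A standard density argument, using the uniform mass bound, will then extend the convergence from $C_c$ to all of $C_0(\omega \times \calY)$.

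The main technical point is coordinating the two approximations so that both error terms are controlled uniformly: the cell-wise replacement of $x'$ by $z'$ relies on uniform continuity of $\psi$ on its compact support, while the boundary layer removal relies on $\supp\psi$ being separated from $\partial\omega \times \calY$. Both steps require test functions in $C_c$ rather than $C_0$ directly, so reducing to this dense subclass is essential rather than merely convenient.
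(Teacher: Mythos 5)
Your argument is correct and complete: the mass bound $|\tilde{\lambda}_\eps|(\omega\times\calY)\leq|\mu_\eps|(\omega)$, the Fubini swap on each cell, the cell-wise replacement of $\psi(x',z'/\eps)$ by $\psi(z',z'/\eps)$ controlled by the modulus of continuity, the boundary-layer removal for compactly supported test functions, and the final density step from $C_c$ to $C_0$ all hold as you state them. The paper itself does not reprove this but defers to \cite[Proposition 4.11]{Francfort.Giacomini.2014}, whose proof proceeds along essentially these same standard lines, so your proposal matches the intended argument.
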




We also recall the following result given in \cite[Theorem 5.7, Step 1]{Francfort.Giacomini.2014} which characterizes how the limit two-scale measure of symmetrized gradients behave at the boundary of regular enough subset of $\calY$. 

\begin{lemma} \label{two-scale rank-1 lemma BD}
Let $\calB \subseteq \calY$ be an open set with Lipschitz boundary, such that $\partial\calB \setminus \calT$ is a $C^1$-hypersurface, for some compact set $\calT$ with $\calH^{1}(\calT) = 0$. 
Additionally we assume that $\partial\calB \cap \calC \subseteq \calT$. 
Let $v^\eps \in BD(\omega)$ be a bounded sequence such that 
\begin{equation*}
	{\E}_{x'}v^\eps\mres{\omega \cap \calB_\eps} \weakstartwoscale \pi \quad \text{two-scale weakly* in $\Mb(\omega \times \calY;\M^{2 \times 2}_{\sym})$}.
\end{equation*}
Then $\pi$ is supported in $\omega \times {\RED\closure{\calB}\BLACK}$ and
\begin{equation} \label{two-scale rank-1 structure BD}
	\pi\mres{\omega \times (\partial\calB \setminus \calT)} = a(x',y) \odot \nu(y) \,\zeta,
\end{equation}
where $\zeta \in \Mb^+(\omega \times (\partial\calB \setminus \calT))$, $a : \omega \times (\partial\calB \setminus \calT) \to \R^2$ is a Borel map, and $\nu$ is the exterior normal to $\partial\calB$.
\end{lemma}

In the following, we will consider the Hessian of $BH$ functions and prove the corresponding results to those above. 
We will use unfolding (like it was used in \cite{Francfort.Giacomini.2014}). \Cref{associated unfolding measure BH} characterizes the unfolding measure associated to sequences of Hessians while \Cref{two-scale rank-1 lemma BH} characterizes the limit two-scale measure of Hessians at the boundary of the regular enough subset of $\calY$. 


\begin{proposition} \label{associated unfolding measure BH}
Let $\omega \subseteq \R^2$ be an open set and let $\calB \subseteq \calY$ be an open set with Lipschitz boundary. 
If $u^\eps \in BH(\omega)$, the unfolding measure associated with ${\D}^2_{x'}u^\eps\mres{(\calB_\eps \setminus \calC_\eps)}$ can be written as
\begin{equation} \label{unfolding Hessian}
	\sum_{i \in I_\eps(\omega)} \left( \calL_{x'}^2\mres{Q_\eps^i} \right) \otimes {\D}^2_{y}\hat{u}_\eps^i\mres{(\calB \setminus \calC)},
\end{equation}
where $\calC$ has been introduced in \eqref{torus boundary} and $\hat{u}_\eps^i \in BH(\mathring{Y})$ is such that
\begin{equation} \label{unfolding inequality BH}
	\int_{\mathring{Y}} \hat{u}_\eps^i(y)\,dy=0, \qquad \left\|\hat{u}_\eps^i\right\|_{BH(\mathring{Y})}
	\leq \frac{C}{\eps^2} |{\D}^2_{x'}u^\eps|\left(int(Q_\eps^i)\right),
\end{equation}
for some constant $C$ independent of $i$ and $\eps$.
\end{proposition}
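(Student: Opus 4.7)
The plan is to construct $\hat{u}_\eps^i$ cell-by-cell as a rescaling of $u^\eps\mres Q_\eps^i$, corrected by an affine function so as to enable a Poincaré-Wirtinger inequality for $BH$ functions. For each $i \in I_\eps(\omega)$, I would set $\tilde{u}_\eps^i(y) := \eps^{-2}\, u^\eps(\eps i + \eps y)$, identifying $\calY \setminus \calC$ with the open square $(0,1)^2$ via $\calI$, so that the rescaling $y \mapsto \eps i + \eps y$ maps this square onto $\mathrm{int}(Q_\eps^i)$. Since $u^\eps \in BH(\omega)$ and $Q_\eps^i \subset \omega$, it follows that $\tilde{u}_\eps^i \in BH(\calY \setminus \calC)$, and the factor $\eps^{-2}$ is dictated by the chain rule (applied first to smooth mollifications and then passed to the limit by standard approximation for $BH$): for every $\psi \in C(\calY)$,
\[
    \int_{\calB \setminus \calC}\psi(y)\,d\D^2_y \tilde{u}_\eps^i(y)
    = \frac{1}{\eps^2}\int_{Q_\eps^i \cap (\calB_\eps \setminus \calC_\eps)} \psi(x'/\eps)\,d\D^2_{x'}u^\eps(x'),
\]
where the $\calY$-periodicity of $\psi$ allows replacing $\psi((x'-\eps i)/\eps)$ by $\psi(x'/\eps)$. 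Summed over $i \in I_\eps(\omega)$, this is exactly the unfolding of $\D^2_{x'}u^\eps\mres(\calB_\eps \setminus \calC_\eps)$, yielding the representation \eqref{unfolding Hessian}.

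To secure the bound \eqref{unfolding inequality BH}, I would subtract off an affine function $\ell_\eps^i(y) := a_\eps^i\cdot y + b_\eps^i$, with $(a_\eps^i, b_\eps^i) \in \R^2 \times \R$ chosen so that
\[
    \int_{\calY \setminus \calC}(\tilde{u}_\eps^i - \ell_\eps^i)\,dy = 0, \qquad \int_{\calY \setminus \calC}\nabla(\tilde{u}_\eps^i - \ell_\eps^i)\,dy = 0,
\]
and set $\hat{u}_\eps^i := \tilde{u}_\eps^i - \ell_\eps^i$. Since $\D^2_y \ell_\eps^i = 0$, the unfolding identity for the Hessian is preserved. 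Two successive applications of the Poincaré-Wirtinger inequality on the Lipschitz domain $\calY \setminus \calC \cong (0,1)^2$ then give
\[
    \|\hat{u}_\eps^i\|_{L^1(\calY \setminus \calC)} + \|\nabla \hat{u}_\eps^i\|_{L^1(\calY \setminus \calC;\R^2)} \leq C\,|\D^2_y \tilde{u}_\eps^i|(\calY \setminus \calC),
\]
while the change of variables applied to total variations yields $|\D^2_y \tilde{u}_\eps^i|(\calY \setminus \calC) = \eps^{-2}|\D^2_{x'}u^\eps|(\mathrm{int}(Q_\eps^i))$. Combining these and restricting from $\calY \setminus \calC$ to $\calB \setminus \calC$ gives \eqref{unfolding inequality BH}.

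The main obstacle I expect is the clean treatment of the boundary set $\calC$: jumps of $\nabla u^\eps$ across the cell edges $\partial Q_\eps^i$ could a priori produce Hessian mass supported on $\calC$ which must not appear on either side of \eqref{unfolding Hessian}--\eqref{unfolding inequality BH}. Working systematically with the open cell $\mathrm{int}(Q_\eps^i)$ and with $\calY \setminus \calC$ (identified with the open square $(0,1)^2$) removes this contribution and is what makes the bound depend only on the Hessian mass inside the open cell. Beyond this bookkeeping and a routine mollification step used to transfer the chain rule from smooth functions to the measure $\D^2_{x'}u^\eps$, no substantially new ideas beyond those already used for the $BD$ analogue (cf. the proof of \Cref{two-scale rank-1 lemma BD} in \cite{Francfort.Giacomini.2014}) are required.
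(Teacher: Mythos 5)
Your proposal is correct and follows essentially the same route as the paper: the same cell-wise rescaling $\hat{u}_\eps^i(y)=\eps^{-2}u^\eps(\eps i+\eps\calI(y))$, the same change-of-variables identity giving \eqref{unfolding Hessian} together with $|\D^2_y\hat{u}_\eps^i|(\calY\setminus\calC)=\eps^{-2}|\D^2_{x'}u^\eps|(\mathrm{int}(Q_\eps^i))$, and the same normalization by subtracting an affine function followed by a Poincar\'e-type ($BH$ Poincar\'e--Korn) inequality to obtain \eqref{unfolding inequality BH}. The only cosmetic differences are that the paper verifies the unfolding identity via the explicit $BV$ decomposition of $\nabla_{x'}u^\eps\charfun{\calB_\eps}$ with its jump term on $\partial\calB_\eps$ (rather than your mollification argument), and applies the Poincar\'e inequality on $\calB\setminus\calC$ before enlarging to the full cell, whereas you apply it directly on $\calY\setminus\calC$ and then restrict; both are fine.
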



\begin{proof}
Since $\calB_\eps$ has Lipschitz boundary, $\nabla_{x'}u^\eps \charfun{\calB_\eps} \in BV_{\rm loc}(\omega;\R^2)$ with
\begin{equation*}
	{\D}^2_{x'}u^\eps\mres{\calB_\eps} = D_{x'}\left(\nabla_{x'}u^\eps \charfun{\calB_\eps}\right) + \left[\nabla_{x'}u^\eps\mres{\partial\calB_\eps} \otimes \nu\right] \calH^{1}\mres{\partial\calB_\eps},
\end{equation*}
where $\nabla_{x'}u^\eps \mres{\partial\calB_\eps}$ denotes the trace of $\nabla_{x'}u^\eps \charfun{\calB_\eps}$ on $\partial\calB_\eps$, while $\nu$ is the exterior normal to $\partial\calB_\eps$. 

Remark that $\calC_\eps = \left( \cup _i \partial Q_\eps^i\right) \cap \omega$. 
Accordingly, for $i \in I_\eps(\omega)$ and $\psi \in C^1(\calY;\M^{2 \times 2}_{\sym})$,
\begin{align*}
	& \int_{Q_\eps^i} \psi\left(\frac{x'}{\eps}\right) : d\left({\D}^2_{x'}u^\eps\mres{(\calB_\eps \setminus \calC_\eps)}\right)(x')
	= \int_{int(Q_\eps^i)} \psi\left(\frac{x'}{\eps}\right) : d\left({\D}^2_{x'}u^\eps\mres{\calB_\eps}\right)(x')\\
	& = \int_{int(Q_\eps^i)} \psi\left(\frac{x'}{\eps}\right) : dD_{x'}\left(\nabla_{x'}u^\eps \charfun{\calB_\eps}\right)(x')
	+ \int_{int(Q_\eps^i)} \psi\left(\frac{x'}{\eps}\right) : \left[\nabla_{x'}u^\eps\mres{\partial\calB_\eps} \otimes \nu\right] d\calH^{1}\mres{\partial\calB_\eps}(x').
\end{align*}
We define 
\[
	\hat{u}_\eps^i(y) := \frac{1}{\eps^2} u^\eps\left(\eps i+\eps \calI(y)\right).
\]
Then $\hat{u}_\eps^i \in BH(\mathring{Y})$ (which essentially means that $\hat{u}_\eps^i$ is not necessarily $[0,1]^2$-periodic).
With change of variables we obtain
\begin{align} \label{BH total variation estimate}
	|{\D}^2_{y}\hat{u}_\eps^i|(\mathring{Y}) = \frac{1}{\eps^2} |{\D}^2_{x'}u^\eps|\left(int(Q_\eps^i)\right).
\end{align}
Furthermore,
\begin{equation*}
	\int_{int(Q_\eps^i)} \psi\left(\frac{x'}{\eps}\right) : dD_{x'}\left(\nabla_{x'}u^\eps \charfun{\calB_\eps}\right)(x') = \eps^2 \int_{\mathring{Y}} \psi(y) : dD_{y}\left(\nabla_{y}\hat{u}_\eps^i \charfun{\calB}\right)(y)
\end{equation*}
and
\begin{equation*}
	\int_{int(Q_\eps^i)} \psi\left(\frac{x'}{\eps}\right) : \left[\nabla_{x'}u^\eps\mres{\partial\calB_\eps} \otimes \nu\right] d\calH^{1}\mres{\partial\calB_\eps}(x') = \eps^2 \int_{\mathring{Y}} \psi(y) : \left[\nabla_{y}\hat{u}_\eps^i\mres{\partial\calB} \otimes \nu\right] d\calH^{1}(y).
\end{equation*}
So we get
\begin{align*}
	& \frac{1}{\eps^2} \int_{Q_\eps^i} \psi\left(\frac{x'}{\eps}\right) : d\left({\D}^2_{x'}u^\eps\mres{(\calB_\eps \setminus \calC_\eps)}\right)(x')\\
	& = \int_{\mathring{Y}} \psi(y) : dD_{x'}\left(\nabla_{y}\hat{u}_\eps^i \charfun{\calB}\right)(y)
	+ \int_{\mathring{Y}} \psi(y) : \left[\nabla_{y}\hat{u}_\eps^i\mres{\partial\calB} \otimes \nu\right] d\calH^{1}(y)\\
	& = \int_{\calY} \psi(y) : d{\D}^2_{y}\hat{u}_\eps^i\mres{(\calB \setminus \calC)}(y),
\end{align*}
from which \eqref{unfolding Hessian} follows. 
It remains to prove \eqref{unfolding inequality BH}. 
Notice that we can assume that
\begin{equation*}
	\int_{\mathring{Y}} \hat{u}_\eps^i(y)\,dy=0, \qquad \left\|\hat{u}_\eps^i\right\|_{BH(\mathring{Y})} \leq C |{\D}^2_{y}\hat{u}_\eps^i|(\mathring{Y}),
\end{equation*}
for some constant $C$ independent of $i$ and $\eps$, since by subtracting an affine term to $u^\eps$ on $Q_\eps^i$ corresponds to subtracting an affine term to $\hat{u}_\eps^i$, which does not modify the calculations done thus far. 
Hence, we can use version of Poincar\'{e} inequality in $BH$ to get the desired inequality.
Finally, from \eqref{BH total variation estimate} we have
\begin{align*}
	& \left\|\hat{u}_\eps^i\right\|_{BH(\mathring{Y})} \leq \frac{C}{\eps^2} |{\D}^2_{x'}u^\eps|\left(int(Q_\eps^i)\right).
\end{align*}
This concludes the proof of the theorem.
\end{proof}

The previous result can be used to prove the following lemma, which in turn (together with \Cref{two-scale rank-1 lemma BD}) will be used in the proof of the lower semicontinuity of dissipation functionals (see \Cref{lower semicontinuity of energies - eps to 0}).

\begin{lemma} \label{two-scale rank-1 lemma BH}
Let $\calB \subseteq \calY$ be an open set with Lipschitz boundary, such that $\partial\calB \setminus \calT$ is a union of
$C^2$ curves, 
for some compact set $\calT$ with $\calH^{1}(\calT) = 0$. 
Additionally we assume that $\partial\calB \cap \calC \subseteq \calT$. 
Let $v^\eps \in BH(\omega)$ be a bounded sequence such that 
\begin{equation*}
	{\D}^2_{x'}v^\eps\mres{\omega \cap \calB_\eps} \weakstartwoscale \pi \quad \text{two-scale weakly* in $\Mb(\omega \times \calY;\M^{2 \times 2}_{\sym})$}.
\end{equation*}
Then $\pi$ is supported in $\omega \times {\RED\closure{\calB}\BLACK}$ and

\begin{equation} \label{two-scale rank-1 structure BH - C^2 bdry}
	\pi\mres{\omega \times (\partial\calB \setminus \calT)} = a(x',y)\, \nu(y) \otimes \nu(y) \,\zeta,
\end{equation}
where $\zeta \in \Mb^+(\omega \times (\partial\calB \setminus \calT))$, $a : \omega \times (\partial\calB \setminus \calT) \to \R$ is a Borel map, and $\nu$ is the exterior normal to $\partial\calB$.
\end{lemma}

\begin{proof}

Denote by $\tilde{\pi} \in \Mb(\omega \times \calY;\M^{2 \times 2}_{\sym})$ the two-scale weak* limit (up to a subsequence) of
\begin{equation*}
	{\D}^2_{x'}v^\eps\mres{\omega \cap (\calB_\eps \setminus \calC_\eps)} \in \Mb(\omega;\M^{2 \times 2}_{\sym}).
\end{equation*}
Then it is enough to prove the analogue of \eqref{two-scale rank-1 structure BH - C^2 bdry} for $\tilde{\pi}$. 
Indeed, the two-scale weak* limit (up to a subsequence) of
\begin{equation*}
	{\D}^2_{x'}v^\eps\mres{\omega \cap (\calB_\eps \cap \calC_\eps)} \in \Mb(\omega;\M^{2 \times 2}_{\sym})
\end{equation*}
is supported on $\omega \times \closure{\calB \cap \calC}$. 
Since by assumption $\partial\calB \cap \calC \subseteq \calT$, we have that $\partial\calB \setminus \calT$ and $\closure{\calB \cap \calC}$ are disjoint sets, which implies
\begin{equation*}
	\pi\mres{\omega \times (\partial\calB \setminus \calT)} = \tilde{\pi}\mres{\omega \times (\partial\calB \setminus \calT)}.
\end{equation*}
Furthermore, we can, without loss of generality assume that $\calB$ has $C^2$ boundary and is the subset of $\mathring{Y}$, since the claim we prove can be obtained through localization, for every $C^2$ part of $\calB$. We continue the proof under this assumption. 

By \Cref{associated unfolding measure BH} we have that the unfolding measure associated with ${\D}^2_{x'}v^\eps\mres{(\calB_\eps \setminus \calC_\eps)}$ is given by
\begin{equation} \label{unfolding symmetrized gradient 2}
	\sum_{i \in I_\eps(\omega)} \left( \calL_{x'}^2\mres{Q_\eps^i} \right) \otimes {\D}^2_{y}\hat{v}_\eps^i\mres{(\calB \setminus \calC)},
\end{equation}
where $\hat{v}_\eps^i \in BH(\mathring{Y})$ are such that
\begin{equation} \label{unfolding inequality 2}
	\int_{\mathring{Y}} \hat{v}_\eps^i(y)\,dy=0, \qquad \left\|\hat{v}_\eps^i\right\|_{BH(\mathring{Y})}
	\leq \frac{C}{\eps^2} |{\D}^2_{x'}v^\eps|\left(int(Q_\eps^i)\right).
\end{equation}
Further, by \Cref{unfolding measure weak* convergence}, the family of associated measures in \eqref{unfolding symmetrized gradient 2} converge weakly* to $\tilde{\pi}$ in $\Mb(\omega \times \calY;\M^{2 \times 2}_{\sym})$. 
Then, for every $\chi \in C_0(\omega \times \mathring{Y};\M^{2 \times 2}_{\sym})$ 
we get
\begin{align*} 
\begin{split}
	\int_{\omega \times \calY} \chi(x', y) : d\tilde{\pi}(x', y)
	& = \lim_{\eps \to 0} \int_{\omega \times \calY} \chi(x', y) : d\left(\sum_{i \in I_\eps(\omega)} \left( \calL_{x'}^2\mres{Q_\eps^i} \right) \otimes {\D}^2_{y}\hat{v}_\eps^i\mres{(\calB \setminus \calC)}\right)
	\\& = \lim_{\eps \to 0} \sum_{i \in I_\eps(\omega)} \int_{Q_\eps^i} \left( \int_{\calB \setminus \calC} \chi(x', y) : d{\D}^2_{y}\hat{v}_\eps^i(y) \right) \,dx'
	\\& = \lim_{\eps \to 0} \sum_{i \in I_\eps(\omega)} \int_{Q_\eps^i} \left( \int_{\calB} \chi(x', y) : d{\D}^2_{y}\hat{v}_\eps^i(y) 
	\right) \,dx'.
\end{split}
\end{align*}

By 
the integration-by-parts formula for $H^2(\calB)$ functions given in \cite[Th\'{e}or\`{e}me 2.3]{Demengel.1983}) and extended to $BH(\calB)$ functions (see \cite[Chapter III, Proposition 2.11]{Temam.1985},
we have (under the assumption $\chi \in C_0^2(\omega \times \mathring{Y};\M^{2 \times 2}_{\sym})$)
\begin{align*} 
	& \sum_{i \in I_\eps(\omega)} \int_{Q_\eps^i} \left( \int_{\calB} \chi(x', y) : d{\D}^2_{y}\hat{v}_\eps^i(y) \right) \,dx'\\
	& = \sum_{i \in I_\eps(\omega)} \int_{Q_\eps^i} \left( \int_{\calB} \hat{v}_\eps^i\,\div_{y}\div_{y}\chi(x',y) \,dy - \int_{\partial\calB} b_0(\chi)\,\hat{v}_\eps^i \,d\calH^{1}(y) + \int_{\partial\calB} b_1(\chi)\,\frac{\partial \hat{v}_\eps^i}{\partial \nu} \,d\calH^{1}(y) \right) \,dx'\\
	& = \sum_{i \in I_\eps(\omega)} \int_{Q_\eps^i} \left( \int_{\calB} \hat{v}_\eps^i\,\div_{y}\div_{y}\chi(x',y) \,dy - \int_{\partial\calB} b_0(\chi)\,\hat{v}_\eps^i \,d\calH^{1}(y) + \int_{\partial\calB} \chi(x', y) : \left(\frac{\partial \hat{v}_\eps^i}{\partial\nu}\, \nu \otimes \nu\right) \,d\calH^{1}(y) \right) \,dx',
\end{align*}
where
\begin{align*}
	b_0(\chi) & = \div_{y}\chi(x',y)\cdot\nu(y) + \frac{\partial}{\partial \tau}\left(\chi(x',y)\,\nu(y)\cdot\tau(y)\right),\\
	b_1(\chi) & = \chi(x',y)\,\nu(y)\cdot\nu(y).
\end{align*}
We consider
\begin{equation*}
	\hat{v}^\eps(x',y) := \sum_{i \in I_\eps(\omega)} \charfun{Q_\eps^i}(x')\,\hat{v}_\eps^i(y), \qquad 
	\hat{w}^\eps(x',y) := \sum_{i \in I_\eps(\omega)} \charfun{Q_\eps^i}(x')\,\frac{\partial \hat{v}_\eps^i}{\partial\nu}\Big|_{\partial \calB \cap \mathring{Y}}(y).
\end{equation*}
In view of \eqref{unfolding inequality 2} we have that $\{\hat{v}^\eps\}$ is bounded in $L^1(\omega \times \mathring{Y})$.
Additionally, by the trace theorem in $BH(\mathring{Y})$, we have that $\{\hat{w}^\eps\}$ is bounded in $L^1(\omega \times \partial\calB)$.
Thus, we can infer that there exist $\delta \in \Mb(\omega \times \mathring{Y})$ and $\lambda \in \Mb(\omega \times \mathring{Y})$ such that (up to a subsequence)
\begin{align}
	\label{vhat^eps convergence in L^1} \hat{v}^\eps \,\calL_{x'}^2 \otimes \calL_{y}^2 & \weakstar \delta \quad \text{weakly* in $\Mb(\omega \times \mathring{Y})$},\\
	\label{what^eps convergence in L^1} \hat{w}^\eps \,\calL_{x'}^2 \otimes (\calH^{1}_{y}\mres\partial\calB ) & \weakstar \lambda \quad \text{weakly* in $\Mb(\omega \times \mathring{Y})$}.
\end{align}
As a consequence of \Cref{proposition properties two-scale convergence} (ii) the support of $\lambda$ is contained in $\omega \times \partial \calB$.
Note also that as a consequence of \eqref{unfolding inequality 2} 
\[
	{\D}^2_y\hat{v}^\eps(x',y) := \sum_{i \in I_\eps(\omega)} \charfun{Q_\eps^i}(x')\,\calL_{x'}^2 \otimes {\D}^2_y\hat{v}_\eps^i(y)
\]
is bounded in $\calM_b (\omega \times \mathring{Y};\M^{2 \times 2}_{\sym})$, and thus there exists $\xi \in \calM_b (\omega \times \mathring{Y};\M^{2 \times 2}_{\sym})$ such that 
\begin{equation} \label{boundedness of second derivatives} 
	{\D}^2_y \hat{v}^\eps \weakstar \xi \quad \text{weakly* in } \Mb(\omega \times \mathring{Y};\M^{2 \times 2}_{\sym}). 
\end{equation} 

From \eqref{vhat^eps convergence in L^1} and \eqref{boundedness of second derivatives} we easily conclude that $\delta \in \calZzero{\omega}$ and ${\D}^2_y \delta=\xi$. 
From \Cref{corrector main property - regime zero without boundary of Y} we conclude that there exists $\eta \in \calM_b^+(\omega)$ such that $\delta=\delta_{x'}(y) \eta \otimes \calL_{x'}^2$, where $\delta_{x'} \in BH(\mathring{Y})$ and $(x',y)\mapsto \delta_{x'}(y)$ is Borel measurable.

We take $\chi(x', y) = \phi(x')\psi(y)$ with $\phi \in C_c^{\infty}(\omega)$ and $\psi \in C_c^{\infty}(\mathring{Y};\M^{2 \times 2}_{\sym})$, since linear combinations of functions of this type are dense in $C_0(\omega \times \mathring{Y};\M^{2 \times 2}_{\sym})$.
We have that
\begin{align*} 
	& \sum_{i \in I_\eps(\omega)} \int_{Q_\eps^i} \left( \int_{\calB} \chi(x', y) : d{\D}^2_{y}\hat{v}_\eps^i(y) \right) \,dx'\\
	& = \int_{\omega \times \calB} \hat{v}^\eps\,\div_{y}\div_{y}\chi \,dx' dy - \int_{\omega \times \partial\calB} b_0(\chi)\,\hat{v}^\eps \,d\left(\calL_{x'}^2 \otimes \calH^{1}_{y}\right) + \int_{\omega \times \partial\calB} \chi : \hat{w}^\eps\, \nu(y) \otimes \nu(y) \,d\left(\calL_{x'}^2 \otimes \calH^{1}_{y}\right)\\
	& = \int_{\calB} \left( \int_{\omega} \hat{v}^\eps(x',y)\,\phi(x') \,dx' \right)\,\div_{y}\div_{y}\psi(y) dy - \int_{\partial\calB} b_0(\psi) \left( \int_{\omega} \hat{v}^\eps(x',y)\,\phi(x') \,dx' \right) \,d\calH^{1}_{y} 
	\\& + \int_{\omega \times \partial\calB} \chi : \hat{w}^\eps\, \nu(y) \otimes \nu(y) \,d\left(\calL_{x'}^2 \otimes \calH^{1}_{y}\right).
\end{align*}
For every fixed $\phi$, denote $z^\eps_\phi := \int_{\omega} \hat{v}^\eps(x',\cdot)\,\phi(x') \,dx'$. 
By \eqref{unfolding inequality 2} we have that $\left\{z^\eps_\phi\right\}$ is bounded in $BH(\calB)$. 
Furthermore, by the trace theorem in $BH(\calB)$, 
we have that the traces $\left\{z^\eps_\phi\right\}$ are bounded in $W^{1,1}(\partial\calB)$. 
Thus, there exists $z_\phi \in BH(\calB)$ such that, up to a subsequence,
\begin{equation} \label{z^eps_phi convergence}
	z^\eps_\phi \weakstar z_\phi \quad \text{weakly* in $BH(\calB)$},
	\quad \text{ and } \quad
	z^\eps_\phi|_{\partial \calB} \strong z_\phi|_{\partial \calB} \quad \text{strongly in $L^1(\partial\calB)$},
\end{equation}
where $z_{\phi}=\int_{\omega}\phi(x') \delta_{x'} \,d\eta(x').$
Furthermore, since $\nu \in C^1(\partial\calB;\R^2)$, we have $\frac{\partial z_{\phi}}{\partial \nu}|_{\partial \calB}=\int_{\omega} \phi(x') \frac{\partial \delta_{x'}}{\partial \nu}\, d\eta(x') |_{\partial \calB}$.
From 
\eqref{what^eps convergence in L^1},
\eqref{z^eps_phi convergence} and integration by parts for $BH(\calB)$ functions we obtain
{\allowdisplaybreaks
\begin{align*} 
	& \int_{\omega \times \calY} \chi(x', y) : d\tilde{\pi}(x', y)
	\\*& = \lim_{\eps \to 0} \left( \int_{\calB} z^\eps_\phi(y)\,\div_{y}\div_{y}\psi(y) \,dy - \int_{\partial\calB} b_0(\psi)\,z^\eps_\phi(y) \,d\calH^{1}_{y} + \int_{\omega \times \partial\calB} \chi : \hat{w}^\eps\, \nu(y) \otimes \nu(y) \,d\left(\calL_{x'}^2 \otimes \calH^{1}_{y}\right) \right)
	\\& = \int_{\calB} z_\phi\,\div_{y}\div_{y}\psi \,dy - \int_{\partial\calB} b_0(\psi)\,z_\phi \,d\calH^{1}_{y} + \int_{\omega \times \partial\calB} \chi : \left( \nu(y) \otimes \nu(y) \right) \,d\lambda
	\\& = \int_{\calB} \psi : d{\D}^2_{y}z_\phi - \int_{\partial\calB} \psi : \left(\frac{\partial z_\phi}{\partial\nu}\, \nu(y) \otimes \nu(y) \right) \,d\calH^{1}_{y} + \int_{\omega \times \partial\calB} \chi : \left(\frac{d\lambda}{d|\lambda|}\, \nu(y) \otimes \nu(y) \right) \,d|\lambda|
	\\*& = \int_{\omega \times \calB} \chi : d{\D}^2_{y}\delta - \int_{\omega \times \partial\calB} \chi : \left( \nu(y) \otimes \nu(y) \right) \,d\left(\eta \frac{\partial \delta_{x'}}{\partial\nu} \otimes \calH^1_y\right) + \int_{\omega \times \partial\calB} \chi : \left(\frac{d\lambda}{d|\lambda|}\, \nu(y) \otimes \nu(y) \right) \,d|\lambda|.
\end{align*}
}\noindent
We can conclude that
\begin{equation*}
	\tilde{\pi}\mres{\omega \times \partial\calB} = \frac{d\lambda}{d|\lambda|}\, \nu(y) \otimes \nu(y) \,|\lambda| - \nu(y) \otimes \nu(y) \,\eta \frac{\partial \delta_{x'}}{\partial\nu} \otimes \calH^1_y \mres \partial \calB.
\end{equation*}
Defining 
$\zeta := |\lambda| + \left|\frac{\partial \delta_{x'}}{\partial\nu}\right| \eta\otimes \calH^1\mres\partial \calB$ and 
$\tilde{a} := \dfrac{d\lambda}{d\zeta} - \dfrac{d \left(\frac{\partial \delta_{x'}}{\partial\nu} \eta\otimes \calH^1_y \mres \calB\right)}{d\zeta}$, 
we obtain the structure \eqref{two-scale rank-1 structure BH - C^2 bdry}.
\end{proof}

\subsection{Lower semicontinuity of the dissipation functional of multiphase materials} \label{semicontinuity of the dissipation functional}

In this section we define the dissipation functional in \eqref{definition R^hom_0} and prove lower semicontinuity result in \Cref{lower semicontinuity of energies - eps to 0}. 
Lower semicontinuity is one of the key ingredients for the proof of \Cref{main result 2}, the energy equality for the limit problem. The definition of dissipation functional is motivated by the compactness statement of \Cref{two-scale rank-1 lemma BD} and \Cref{two-scale rank-1 lemma BH}. 
Note that there is a fundamental difference between lower semicontinuity result used in \cite{Buzancic.Davoli.Velcic.2024.2nd} and the one proved here, since in \cite{Buzancic.Davoli.Velcic.2024.2nd} we analyzed the case when there is a special relation between the phases which implied that on the interface we could simply define the dissipation potential to take the values of the minimal one.

From now onward, we consider the open, Lipschitz and bounded $\ext{\omega} \subseteq \R^2$ such that $\omega$ is compactly contained in $\ext{\omega}$. 
We also denote by $\ext{\Omega} = \ext{\omega} \times I$ the associated reference domain.

The following definition will define the two-scale limit (where the limit of $(u^\eps, e^{\eps}, p^{\eps})$ will belong to). 
\begin{definition} \label{definition A^hom_0}
Let $w \in H^1_{KL}(\ext{\Omega})$. 
We define the class $\calA^{\rm hom}_{0}(w)$ of admissible two-scale configurations relative to the boundary datum $w$ as the set of triplets $(u,E,P)$ with
\begin{equation*}
	u \in BD_{KL}(\ext{\Omega}), \qquad E \in L^2(\ext{\Omega} \times \calY;\M^{2 \times 2}_{\sym}), \qquad P \in \Mb(\ext{\Omega} \times \calY;\M^{2 \times 2}_{\sym}),
\end{equation*}
such that
\begin{equation*}
	u = w, \qquad E = {\E}w, \qquad P = 0 \qquad \text{ on } (\ext{\Omega} \setminus \closure{\Omega}) \times \calY,
\end{equation*}
and also such that there exist $\mu \in \calXzero{\ext{\omega}}$, $\kappa \in \calYzero{\ext{\omega}}$ with
\begin{equation} \label{admissible two-scale configurations - regime zero}
	{\E}u \otimes \calL^{2}_{y} + {\E}_{y}\mu - x_3 {\D}^2_{y}\kappa = E \,\calL^{3}_{x} \otimes \calL^{2}_{y} + P \qquad \text{ in } \ext{\Omega} \times \calY.
\end{equation}
\end{definition}

For $(u^\eps, e^\eps, p^\eps, \alpha^\eps) \in \calA_{0}^{\rm hard,\eps}(w)$, we recall the definition of energy functionals given in \eqref{definition Q^el,eps_0}--\eqref{definition R^eps_0}. 
For $(u,E,P) \in \calA^{\rm hom}_{0}(w)$ we first define
\begin{equation} \label{definition Q^hom_0} 
	\calQ^{\rm hom}_{0}(E) := \frac{1}{2} \int_{\Omega \times \calY} \red{\C}(y) E(x,y) : E(x,y) \,{\PINK dx\,dy\BLACK}.
\end{equation}
Secondly, for 
$i \neq j$, $\nu \in \mathbb{S}^1$, $\bar{c} \in \R^2$, $\hat{c} \in \R$ define
\begin{equation} \label{R_ij definition}
	\disspot^{\rm red}_{ij}(\nu, \bar{c}, \hat{c}) := \inf\limits_{C(\bar{c}, \hat{c})} \left\{ \int_I \!\Big( \disspot_i^{\rm red}\!\big(\bar{c}^i \odot \nu + x_3 \hat{c}^i\, \nu \otimes \nu\big) + \disspot_j^{\rm red}\!\big(\bar{c}^j \odot \nu + x_3 \hat{c}^j\, \nu \otimes \nu\big) \!\Big) dx_3 \right\},
\end{equation}
where the infimum is taken over 
the set
\[
	C(\bar{c}, \hat{c}) := \left\{ (\bar{c}^i,\bar{c}^j,\hat{c}^i,\hat{c}^j) \in \mathbb{R}^2 \times \mathbb{R}^2 \times \mathbb{R} \times \mathbb{R} : \bar{c}^i+\bar{c}^j = \bar{c},\, \hat{c}^i+\hat{c}^j = \hat{c} \right\}.
\]
For any 
measure $P \in \Mb(\ext{\Omega} \times \calY;\M^{2 \times 2}_{\sym})$ for which there exist $\zeta \in \Mb^+(\ext{\omega} \times (\Gamma_{ij} \setminus S))$ and for every $i,j$ Borel maps $\bar{c} : \ext{\omega} \times (\Gamma_{ij} \setminus S) \to \R^2$, $\hat{c} : \ext{\omega} \times (\Gamma_{ij} \setminus S) \to \R$, such that
\begin{equation} \label{P form on the interface}
	P\mres{\ext{\Omega} \times (\Gamma_{ij} \setminus S)} = \big[ \bar{c}(x',y) \odot \nu(y) + x_3 \hat{c}(x',y) \, \nu(y) \otimes \nu(y) \big]\,\zeta \otimes \calL_{x_3}^1,
\end{equation}
we define $\disspot^{\rm red}(P) \in \Mb^+(\ext{\Omega} \times \calY)$ by
\begin{equation} \label{R^red measure definition}
	\disspot^{\rm red}(P) := \sum_{i} \disspot_i^{\rm red}(P) + \sum_{i < j} \disspot_{ij}^{\rm red}(P),
\end{equation}
where
\begin{align*}
	& \disspot_i^{\rm red}(P) := \disspot_i^{\rm red}\left(\frac{dP}{d|P|}\right)|P| \in \Mb^+(\ext{\Omega} \times \calY_i),
	\\& \disspot_{ij}^{\rm red}(P) := \disspot_{ij}^{\rm red}\left(\nu,\bar{c},\hat{c}\right) \zeta \otimes \calL_{x_3}^1 \in \Mb^+(\ext{\Omega} \times (\Gamma_{ij} \setminus S)).
\end{align*}
Finally, we define $\calR^{\rm hom}_{0}(P)$ as the total mass $\disspot^{\rm red}(P)(\tilde{\Omega}\times \mathcal{Y})$, i.e.
\begin{align} \label{definition R^hom_0}
\begin{split}
	\calR^{\rm hom}_{0}(P) & := \sum_{i} \int_{\ext{\Omega} \times \calY_i} \disspot^{\rm red}_i\!\left(\frac{dP}{d|P|}\right) \,d|P| 
	+ \sum_{i < j} \int_{\ext{\omega} \times (\Gamma_{ij} \setminus S)} \disspot^{\rm red}_{ij}(\nu(y), \bar{c}(x',y), \hat{c}(x',y)) \,d\zeta.
\end{split}
\end{align}

\begin{remark} \label{minimum attained} 
Note that the definition of $\disspot_{ij}^{\rm red}(P)$ is independent of the choice of the triple $(\bar{c}, \hat{c}, \zeta)$ in \eqref{P form on the interface}.
Furthermore, from \eqref{coercivity of R_i} we have that 
\begin{equation} \label{boundedness interface} 
	\disspot_{ij}^{\rm red}\left(\nu,\bar{c},\hat{c}\right) \leq C \left(|\bar{c}|+|\hat{c}|\right).
\end{equation}
By using the standard coercivity and convexity arguments it can be easily seen that the infimum in \eqref{R_ij definition} is actually a minumum. 
Moreover it is not difficult to show that 
\begin{equation} \label{coercivity interface} 
	\disspot_{ij}^{\rm red}\left(\nu,\bar{c},\hat{c}\right) \geq \alpha \left(|\bar{c}|+|\hat{c}|\right),
\end{equation}
for some $\alpha>0$. 
\end{remark}

\begin{remark} \label{remark interface potential}
The dissipation functional defined in \eqref{definition R^hom_0} {\RED does not\BLACK} have {\RED a\BLACK} dissipation potential on $\Mb(\ext{\Omega} \times \calY;\M^{2 \times 2}_{\sym})$ and is not of the form of pointwise inf-convolution since one integrates over $I$ before taking the infimum!
\end{remark}

\begin{remark} \label{marinrem} 
Using the identity \eqref{admissible two-scale configurations - regime zero} and the disintegration of measures in $\calXzero\omega$ and $\calYzero\omega$ (see \cite{Buzancic.Davoli.Velcic.2024.1st}) one can actually see that $P$ is more regular on the interface of phases (it is {\RED a\BLACK} measure in $x'$, but {\RED a\BLACK} $L^1$ function in $y$). However, for the lower semicontinuity result we will need to define the plastic dissipation for the more general measures at the interface, like it is done here.
\end{remark}

Next we give {\RED the\BLACK} lower semicontinuity result. 
\begin{theorem} \label{lower semicontinuity of energies - eps to 0}
Let $(u^\eps, e^\eps, p^\eps, \alpha^\eps) \in \calA_{0}^{\rm hard,\eps}(w)$ be such that
\begin{align}
	& u^\eps \weakstar u \quad \text{weakly* in $BD(\ext{\Omega})$},\\
	& e^\eps \weaktwoscale E \quad \text{two-scale weakly in $L^2(\ext{\Omega} \times \calY;\M^{2 \times 2}_{\sym})$}, \label{e^eps two-scale weakly}\\
	& p^\eps \weakstartwoscale P \quad \text{two-scale weakly* in $\Mb(\ext{\Omega} \times \calY;\M^{3 \times 3}_{\dev})$},
\end{align}
with $(u, E, P^{1,2}) \in \calA^{\rm hom}_{0}(w)$. 
Then we have
\begin{equation} \label{lower semicontinuity Eeps}
	\calQ^{\rm hom}_{0}(E) \leq \liminf\limits_{\eps} \calQ^{\rm el,\eps}_{0}(e^\eps) 
\end{equation}
and
\begin{equation} \label{lower semicontinuity Reps}
	\calR^{\rm hom}_{0}(P^{1,2}) \leq \liminf\limits_{\eps} \calR^{\eps}_{0}(p^\eps, \alpha^\eps).
\end{equation}
\end{theorem}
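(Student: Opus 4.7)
The strategy is to handle \eqref{lower semicontinuity Eeps} and \eqref{lower semicontinuity Reps} separately, and within the dissipation bound to decompose the two-scale limit $P^{1,2}$ into bulk contributions on each phase $\calY_i$ and interface contributions on each $\Gamma_{ij}\setminus S$.

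For the energy inequality \eqref{lower semicontinuity Eeps}, I would first discard the non-negative hardening term $\delta(\eps)\,\calQ^{\rm hard,\eps}_0(p^\eps,\alpha^\eps)$. Writing the elastic energy as $\calQ^{\rm el,\eps}_0(e^\eps) = \int_\Omega Q(x'/\eps, e^\eps)\,dx$ with $Q(y,\xi) := \tfrac12 \C(y)\xi:\xi$ convex in $\xi$, a standard convex-integrand lower semicontinuity result under two-scale weak convergence combined with \eqref{e^eps two-scale weakly} yields $\liminf_\eps \calQ^{\rm el,\eps}_0(e^\eps) \geq \int_{\Omega\times\calY} Q(y,E)\,dx\,dy$. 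The minimization problem \eqref{Q_r minimum problem} defining $\A_y$ provides the pointwise bound $\C(y)\xi:\xi \geq \red{\C}(y)\xi^{1,2}:\xi^{1,2}$ for every $\xi \in \M^{3\times 3}_{\sym}$, so integration recovers \eqref{lower semicontinuity Eeps}.

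For the dissipation inequality \eqref{lower semicontinuity Reps}, since $\calR^\eps_0(p^\eps,\alpha^\eps) = \int_\Omega \disspot(x'/\eps, p^\eps)\,dx$, I would first treat the bulk contribution. On each phase $\calY_i$, the pointwise inequality $\disspot_i(p)\geq \disspot_i^{\rm red}(p^{1,2})$, which follows from the support-function characterization \eqref{K_r characherization} of $\red{K}_i$, together with a Reshetnyak-type lower semicontinuity argument applied to the associated unfolding measures (cf.\ \Cref{unfolding measure weak* convergence}) produces
\[
    \liminf_\eps \int_{\Omega \cap (\calY_i)_\eps} \disspot_i(p^\eps)\,dx \geq \int_{\ext{\Omega} \times \calY_i} \disspot_i^{\rm red}\!\Big(\tfrac{dP^{1,2}}{d|P^{1,2}|}\Big)\,d|P^{1,2}|,
\]
which recovers the first sum in \eqref{definition R^hom_0}.

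The interface contributions constitute the main obstacle. Applying \Cref{two-scale rank-1 lemma BD} to $\bar{u}^\eps$ and \Cref{two-scale rank-1 lemma BH} to $u_3^\eps$ separately on each side $\calY_i$ of an interface $\Gamma_{ij}$, together with the Kirchhoff--Love identity $(e^\eps)^{1,2}+(p^\eps)^{1,2} = \E_{x'}\bar u^\eps - x_3 \D^2_{x'} u_3^\eps$ and the fact that $(e^\eps)^{1,2}$ is uniformly bounded in $L^2$ (so it contributes no singular two-scale mass), I would identify
\[
    P^{1,2}\mres{\ext{\Omega}\times(\partial\calY_i \setminus S)} = \bigl[\bar c^i \odot \nu^i + x_3\, \hat c^i\, \nu^i \otimes \nu^i\bigr]\,\zeta^i \otimes \calL^1_{x_3}
\]
for appropriate Borel maps $\bar c^i, \hat c^i$ and non-negative measures $\zeta^i$; summing the contributions from the two sides of $\Gamma_{ij}$ (with $\nu^j=-\nu^i$) then yields the decomposition \eqref{P form on the interface}. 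The hard step is then to localize $p^\eps$ in thin layers adjacent to $\Gamma_{ij}$ inside $(\calY_i)_\eps$ and $(\calY_j)_\eps$, apply the previous bulk semicontinuity on each layer, and take the infimum over admissible splittings $(\bar c^i,\bar c^j,\hat c^i,\hat c^j)\in C(\bar c, \hat c)$; this step produces exactly the inf-convolution \eqref{R_ij definition}. The delicate point is that the splitting of $p^\eps$ across the interface is not canonical, and one must exploit the $C^2$-regularity of $\Gamma\setminus S$ and the transversality \eqref{transversality condition} with the periodicity lattice $\calC$ to ensure that no spurious surface terms appear when separating contributions. Summing over all interfaces gives the second sum of \eqref{definition R^hom_0} and completes the proof of \eqref{lower semicontinuity Reps}.
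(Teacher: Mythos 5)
Your handling of \eqref{lower semicontinuity Eeps} and of the bulk part of \eqref{lower semicontinuity Reps} is sound and essentially the paper's: discard the non-negative hardening term, use convexity together with \eqref{e^eps two-scale weakly} (the paper implements this by expanding $0\le\tfrac12\int_\Omega \C(x'/\eps)(e^\eps-\varphi):(e^\eps-\varphi)\,dx$ and letting $\varphi\to E$), and then invoke the pointwise reductions $\C(y)\xi:\xi\ge \red{\C}(y)\xi^{1,2}:\xi^{1,2}$ and $\disspot_i(\xi)\ge\disspot^{\rm red}_i(\xi^{1,2})$ (the latter legitimate because $\tr\xi=0$). Also, obtaining the rank-one, affine-in-$x_3$ structure on the interfaces from \Cref{two-scale rank-1 lemma BD} and \Cref{two-scale rank-1 lemma BH} applied to $\bar{u}^\eps$ and $u_3^\eps$, with $e^\eps$ contributing no concentration, is exactly what the paper does.

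The gap is in how you extract the interface term of $\calR^{\rm hom}_{0}$. As stated, your bulk inequality spends the entire integral $\int_{\Omega\cap(\calY_i)_\eps}\disspot_i(p^\eps)\,dx$ yet returns only the integral of $\disspot^{\rm red}_i$ over the \emph{open} phase $\ext{\Omega}\times\calY_i$; you then propose to recover the interface dissipation from the same quantity by localizing $p^\eps$ in thin layers and "applying the bulk semicontinuity on each layer". Without restricting the bulk bound away from the interface, decomposing each phase into disjoint regions, using superadditivity of the liminf, and passing to the limit in the layer width, this double counts the same mass, and none of that bookkeeping is supplied — you yourself flag it as the hard step. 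The paper needs no layer localization at all: it applies the two-scale Reshetnyak lower semicontinuity (\cite[Lemma 4.6]{Francfort.Giacomini.2014}) once to each phase restriction $p^\eps_i:=p^\eps\mres{\ext{\Omega}\cap((\calY_i)_\eps\times I)}$, which bounds the liminf by the mass of $\disspot_i\big(\tfrac{dP_i}{d|P_i|}\big)|P_i|$ over \emph{all} of $\ext{\Omega}\times\calY$, hence including the part of the one-sided limit $P_i$ concentrated on $\Gamma_{ij}\setminus S$; the lower bound is then simply split into bulk and interface pieces of the limit measure. The interface piece is expressed through $P_i^{1,2}$ and $P_j^{1,2}$ separately, each of the form $(\bar b_{ij}\odot\nu+x_3\,\hat b_{ij}\,\nu\otimes\nu)\,\eta_{ij}\otimes\calL^1_{x_3}$; since these two one-sided contributions sum to the jump data $(\bar c,\hat c)$ of $P^{1,2}$ in \eqref{P form on the interface}, they are an admissible competitor in \eqref{R_ij definition}, and bounding below by the infimum produces the inf-convolution. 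Two related corrections: your displayed identification should concern the one-sided limit $P_i^{1,2}$, not $P^{1,2}\mres{\ext{\Omega}\times(\partial\calY_i\setminus S)}$ — keeping the one-sided limits separate is precisely what makes the final minimization step legitimate — and the delicate point is not that "the splitting of $p^\eps$ across the interface is not canonical" (it is canonical: restriction to $(\calY_i)_\eps$ and $(\calY_j)_\eps$), but that each one-sided two-scale limit must be shown to carry the affine rank-one structure so that the pair is admissible in $C(\bar c,\hat c)$.
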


\begin{proof}
\eqref{lower semicontinuity Eeps} is the consequence of convexity. 

To prove \eqref{lower semicontinuity Reps}, let us first assume without loss of generality that 
\begin{equation} \label{WLOG finite liminf}
	\liminf\limits_{\eps} \calR^{\eps}_{0}(p^\eps, \alpha^\eps) < \infty.
\end{equation}
Identifying $p^\eps \in L^2(\ext{\Omega};\M^{3 \times 3}_{\dev})$ with elements in $\Mb(\ext{\Omega};\M^{3 \times 3}_{\dev})$, 
we can write
$p^\eps = \sum_{i} p^\eps_{i}$,
where $p^\eps_{i} := p^\eps\mres{\ext{\Omega} \cap ((\calY_i)_\eps \times I)}$. 
Up to a subsequence,
\begin{align*}
	& p^\eps_{i} \weakstartwoscale P_{i} \quad \text{two-scale weakly* in $\Mb(\ext{\Omega} \times \calY;\M^{3 \times 3}_{\dev})$}. 
\end{align*}
Clearly, 
$P = \sum_{i} P_{i}$,
with $\supp(P_{i}) \subseteq \ext{\Omega} \times \closure{\calY}_i$. 
Furthermore, considering \eqref{e^eps two-scale weakly}, we can concluded that
\begin{equation*}
	{\E}u^\eps\mres{\ext{\Omega} \cap ((\calY_i)_\eps \times I)} \weakstartwoscale E^{1,2} \,\charfun{\ext{\Omega} \times \calY_i} \,\calL^{3}_{x} \otimes \calL^{2}_{y} + P_{i}^{1,2} \quad \text{two-scale weakly* in $\Mb(\ext{\Omega} \times \calY;\M^{2 \times 2}_{\sym})$}
\end{equation*}
Recalling \eqref{transversality condition}, we can additionally assume that $\Gamma_{ij} \cap \calC \subseteq S$. 
Then, with a normal $\nu$ on $\Gamma_{ij}$ that points from $\calY_j$ to $\calY_i$ for every $j \neq i$, \Cref{two-scale rank-1 lemma BD} and \Cref{two-scale rank-1 lemma BH} imply that
\begin{equation} \label{rank-1 lemma implication 0th moment}
	\zerothI{P}_{i}^{1,2}\mres{\ext{\omega} \times (\Gamma_{ij} \setminus S)} = \bar{a}_{ij}(x',y) \odot \nu(y) \,\bar{\eta}_{ij}.
\end{equation}
\begin{equation} \label{rank-1 lemma implication 1st moment}
	\firstI{P}_{i}^{1,2}\mres{\ext{\omega} \times (\Gamma_{ij} \setminus S)} = \hat{a}_{ij}(x',y)\, \nu(y) \otimes \nu(y) \,\hat{\eta}_{ij},
\end{equation}
for suitable $\bar{\eta}_{ij},\, \hat{\eta}_{ij} \in \Mb^+ (\ext{\omega} \times (\Gamma_{ij} \setminus S))$, 
and Borel maps $\bar{a}_{ij} : \ext{\omega} \times (\Gamma_{ij} \setminus S) \to \R^2$ and $\hat{a}_{ij} : \ext{\omega} \times (\Gamma_{ij} \setminus S) \to \R$. We also have that $(P_i^{1,2})^{\perp}\mres{\ext{\omega} \times (\Gamma_{ij} \setminus S)}=-(E^{1,2})^{\perp} \mres{\ext{\omega} \times (\Gamma_{ij} \setminus S)}=0$. 

Using a version of Reshetnyak's lower semicontinuity theorem adapted for two-scale convergence (see \cite[Lemma 4.6]{Francfort.Giacomini.2014}), we deduce
\begin{align} \label{R liminf}
	\nonumber & \liminf\limits_{\eps} \int_{\Omega} \disspot\!\left(\frac{x'}{\eps}, p^\eps_{i}\right) \,dx = \liminf\limits_{\eps} \int_{\ext{\Omega}} \disspot\!\left(\frac{x'}{\eps}, \frac{dp^\eps_{i}}{d|p^\eps_{i}|}\right) \,d|p^\eps_{i}|\\
	\nonumber & = \liminf\limits_{\eps} \int_{\ext{\Omega}} \disspot_i\!\left(\frac{dp^\eps_{i}}{d|p^\eps_{i}|}\right) \,d|p^\eps_{i}|
	\geq \int_{\ext{\Omega} \times \calY} \disspot_i\!\left(\frac{dP_{i}}{d|P_{i}|}\right) \,d|P_{i}|\\
	\nonumber & = \int_{\ext{\Omega} \times \calY_i} \disspot_i\!\left(\frac{dP_{i}}{d|P_{i}|}\right) \,d|P_{i}| + \int_{\ext{\Omega} \times \Gamma} \disspot_i\!\left(\frac{dP_{i}}{d|P_{i}|}\right) \,d|P_{i}|\\
	& \geq \int_{\ext{\Omega} \times \calY_i} \disspot_i\!\left(\frac{dP_{i}}{d|P_{i}|}\right) \,d|P_{i}| + \sum_{j \neq i} \int_{\ext{\Omega} \times (\Gamma_{ij} \setminus S)} \disspot_i\!\left(\frac{dP_{i}}{d|P_{i}|}\right) \,d|P_{i}|.
\end{align}
From \eqref{rank-1 lemma implication 0th moment} and \eqref{rank-1 lemma implication 1st moment} we get
\begin{align*}
	P_{i}^{1,2}\mres{\ext{\Omega} \times (\Gamma_{ij} \setminus S)} 
	& = \zerothI{P}_{i}^{1,2}\mres{\ext{\omega} \times (\Gamma_{ij} \setminus S)} \otimes \calL^{1}_{x_3} + x_3 \firstI{P}_{i}^{1,2}\mres{\ext{\omega} \times (\Gamma_{ij} \setminus S)} \otimes \calL^{1}_{x_3}
	\\& = \bar{a}_{ij} \odot \nu(y) \,\bar{\eta}_{ij} \otimes \calL^{1}_{x_3} + x_3 \hat{a}_{ij}\, \nu(y) \otimes \nu(y) \,\hat{\eta}_{ij} \otimes \calL^{1}_{x_3}
	\\& = \!\left( \bar{b}_{ij} \odot \nu(y) + x_3 \hat{b}_{ij}\, \nu(y) \otimes \nu(y) \right) \eta_{ij} \otimes \calL^{1}_{x_3},
\end{align*}
where $\eta_{ij} := \bar{\eta}_{ij} + \hat{\eta}_{ij} \in \Mb^+ (\ext{\omega} \times (\Gamma_{ij} \setminus S))$, 
$\bar{b}_{ij} := \bar{a}_{ij}\dfrac{d\bar{\eta}_{ij}}{d\eta_{ij}}$ and $\hat{b}_{ij} := \hat{a}_{ij}\dfrac{d\hat{\eta}_{ij}}{d\eta_{ij}}$.
Since $P^{1,2} = P_{i}^{1,2} + P_{j}^{1,2}$ on $\ext{\Omega} \times (\Gamma_{ij} \setminus S)$, we can write
\begin{align} \label{P decomposition of form on interface}
	\nonumber & P^{1,2}\mres{\ext{\Omega} \times (\Gamma_{ij} \setminus S)}
	\\ \nonumber & = \!\left( \bar{b}_{ij} \odot \nu(y) + x_3 \hat{b}_{ij}\, \nu(y) \otimes \nu(y) \right) \eta_{ij} \otimes \calL^{1}_{x_3}
	+ \!\left( -\bar{b}_{ji} \odot \nu(y) + x_3 \hat{b}_{ji}\, \nu(y) \otimes \nu(y) \right) \eta_{ji} \otimes \calL^{1}_{x_3}
	\\& = \!\left( \left[\bar{c}_{ij}^i+\bar{c}_{ij}^j\right] \odot \nu(y) + x_3 \left[\hat{c}_{ij}^i+\hat{c}_{ij}^j\right]\, \nu(y) \otimes \nu(y) \right) \zeta_{ij} \otimes \calL^{1}_{x_3},
\end{align}
where $\zeta_{ij} := \eta_{ij} + \eta_{ji}$, 
and $\bar{c}_{ij}^i, \bar{c}_{ij}^j : \ext{\omega} \times (\Gamma_{ij} \setminus S) \to \R^2$, $\hat{c}_{ij}^i, \hat{c}_{ij}^j : \ext{\omega} \times (\Gamma_{ij} \setminus S) \to \R$ are suitable Borel functions.

Thus, by \eqref{R liminf} {\RED and recalling \eqref{reference1}\BLACK}, we have 
{\allowdisplaybreaks
\begin{align*}
	& \liminf\limits_{\eps} \calR^{\eps}_{0}(p^\eps)
	\\*& \geq \sum_{i} \int_{\ext{\Omega} \times \calY_i} \disspot^{\rm red}_i\!\left(\frac{dP^{1,2}}{d|P|}\right) \,d|P| + \sum_{i \neq j} \int_{\ext{\Omega} \times (\Gamma_{ij} \setminus S)} \disspot_i^{\rm red}\!\left(\bar{b}_{ij} \odot \nu(y) + x_3 \hat{b}_{ij}\, \nu(y) \otimes \nu(y) \right) \,d\eta_{ij} dx_3
	\\& = \sum_{i} \int_{\ext{\Omega} \times \calY_i} \disspot^{\rm red}_i\!\left(\frac{dP^{1,2}}{d|P^{1,2}|}\right) \,d|P^{1,2}| 
	\\*& \hspace{1em}+ \sum_{i < j} \int_{\ext{\Omega} \times (\Gamma_{ij} \setminus S)} \!\Big( \disspot_i^{\rm red}\!\big(\bar{c}_{ij}^i \odot \nu + x_3 \hat{c}_{ij}^i\, \nu \otimes \nu\big) + \disspot_j^{\rm red}\!\big(\bar{c}_{ij}^j \odot \nu + x_3 \hat{c}_{ij}^j\, \nu \otimes \nu\big) \!\Big) \,d\zeta_{ij} dx_3
	\\& \geq \sum_{i} \int_{\ext{\Omega} \times \calY_i} \disspot^{\rm red}_i\!\left(\frac{dP^{1,2}}{d|P^{1,2}|}\right) \,d|P^{1,2}| 
	\\*& \hspace{1em}+ \sum_{i < j} \int_{\ext{\omega} \times (\Gamma_{ij} \setminus S)} \inf\limits_{C_{(x',y)}}\left\{ \int_I \!\Big( \disspot_i^{\rm red}\!\big(\bar{c}^i \odot \nu + x_3 \hat{c}^i\, \nu \otimes \nu\big) + \disspot_j^{\rm red}\!\big(\bar{c}^j \odot \nu + x_3 \hat{c}^j\, \nu \otimes \nu\big) \!\Big) dx_3 \right\} \,d\zeta_{ij},
\end{align*}
}\noindent
where $C_{(x',y)} := \left\{ (\bar{c}^i,\bar{c}^j,\hat{c}^i,\hat{c}^j) : \bar{c}^i+\bar{c}^j = \bar{c}_{ij}^i(x',y)+\bar{c}_{ij}^j(x',y),\, \hat{c}^i+\hat{c}^j = \hat{c}_{ij}^i(x',y)+\hat{c}_{ij}^j(x',y) \right\}$.
Considering \eqref{P decomposition of form on interface} and the definition \eqref{definition R^hom_0}, we infer that the right-hand side in the above inequality is in fact equal to $\calR^{\rm hom}_{0}(P^{1,2})$,
which in turn concludes the proof.
\end{proof}

\subsection{Two-scale limit stresses and lower bound for the dissipation functional} \label{section two-scale limit stresses}

The goal of this section is to prove the lower bound for the plastic dissipation given in \Cref{two-scale Hill's principle - regime zero}, whose corollary will be used for proving global stability of the two-scale limit in \Cref{main result 2} (see \Cref{two-scale dissipation and plastic work inequality})).
In this way we avoid using recovery sequence argument for global stability used in \cite{Davoli.Mora.2013}.
This approach is introduced in \cite{Francfort.Giacomini.2014} and further developed in dimension reduction problems in \cite{Buzancic.Davoli.Velcic.2024.1st,Buzancic.Davoli.Velcic.2024.2nd}.
In order to use that approach one needs to define the limit stresses and prove certain compactness results as well as version of stress-strain duality on $I \times \calY$.
One then proves lower bound for plastic dissipation on a torus (see \Cref{cell Hill's principle - regime zero}) and uses disintegration of measures to prove the lower bound for two-scale plastic dissipation.
Again the fundamental difference with respect to the approach used in \cite{Buzancic.Davoli.Velcic.2024.2nd} is that here we have to take into account the new definition of the dissipation potential at the interface. 

In \Cref{section two-scale statics and duality} we introduce two-scale limit stresses and prove the compactness result, i.e. that $\eps$-stresses converge two-scale to the element of the class of limit stresses. 
In \Cref{section stress-plastic strain duality on the cell} we define the class of stresses on $I \times \calY$, state and prove stress-strain duality on $I \times \calY$ and prove lower bound for plastic dissipation $I \times \calY$. Finally in \Cref{subs:dis} we use disintegration of measures to prove the lower bound for two-scale plastic dissipation.

\subsubsection{Two-scale statics and duality} \label{section two-scale statics and duality} 
For every $e^\eps \in L^2(\Omega;\M^{2 \times 2}_{\sym})$, $p^\eps \in L^2(\Omega;\M^{3 \times 3}_{\dev})$ and $\alpha^\eps \in L^2(\Omega)$, $\alpha^\eps \geq 0$, we denote 
\begin{equation*}
	\sigma^\eps(x) := \red{\C}\!\left(\frac{x'}{\eps}\right) e^\eps(x),
	\quad \chikin^\eps(x) := - \delta(\eps)\,\Hkin\!\left(\frac{x'}{\eps}\right) p^\eps(x)
	\;\text{ and }\; \chiiso^\eps(x) := - \delta(\eps)\,\Hiso\!\left(\frac{x'}{\eps}\right) \alpha^\eps(x).
\end{equation*}
Considering the results in \Cref{Admissible stress configurations}, we introduce
\begin{align*}
	\calK^\eps_{0} = \bigg\{& (\sigma^\eps,\, \chikin^\eps,\, \chiiso^\eps) \in L^2(\Omega;\M^{3 \times 3}_{\sym}) \times L^2(\Omega;\M^{3 \times 3}_{\dev}) \times L^2(\Omega;(-\infty,0]) : 
	\\& \sigma^\eps_{i3} = 0,\quad \div_{x'}{\RED\zerothI{\sigma}\BLACK}^\eps = 0 \text{ in } \omega,\quad \div_{x'}\div_{x'}{\RED\firstI{\sigma}\BLACK}^\eps = 0 \text{ in } \omega,
	\\& \sigma^\eps_{\dev}(x',x_3)+\chikin^\eps(x',x_3) \in \!\left(1-\chiiso^\eps(x',x_3)\right) K\!\left(\frac{x'}{\eps}\right) \,\text{ for a.e. } x' \in \omega,\, x_3 \in I\bigg\},
\end{align*}
where ${\RED\zerothI{\sigma}\BLACK}^\eps,\, {\RED\firstI{\sigma}\BLACK}^\eps \in L^2(\omega;\M^{2 \times 2}_{\sym})$ are the zeroth and first order moments of the $2 \times 2$ minor of $\sigma^\eps$.
The following definition introduces the class of two-scale limit stresses (see also \cite{Buzancic.Davoli.Velcic.2024.2nd}).
\begin{definition} \label{definition K^hom_0}
The set $\calK^{\rm hom}_{0}$ is the set of all elements $\Sigma \in L^{\infty}(\Omega \times \calY;\M^{3 \times 3}_{\sym})$ satisfying:
\begin{enumerate}[label=(\roman*)]
	\item \label{definition K^hom_0 (i)} $\Sigma_{i3}(x,y) = 0 \,\text{ for } i=1,2,3$,
	\item \label{definition K^hom_0 (ii)} $\Sigma_{\dev}(x,y) \in K(y) \,\text{ for } \calL^{3}_{x} \otimes \calL^{2}_{y}\text{-a.e. } (x,y) \in \Omega \times \calY$,
	\item \label{definition K^hom_0 (iii)} $\div_{y}{\PINK\zerothI{\Sigma}\BLACK}(x',\cdot) = 0 \text{ in } \calY \,\text{ for a.e. } x' \in \omega$,
	\item \label{definition K^hom_0 (iv)} $\div_{y}\div_{y}{\PINK\firstI{\Sigma}\BLACK}(x',\cdot) = 0 \text{ in } \calY \,\text{ for a.e. } x' \in \omega$,
	\item \label{definition K^hom_0 (v)} $\div_{x'}{\PINK\zerothI{\sigma}\BLACK} = 0 \text{ in } \omega$,
	\item \label{definition K^hom_0 (vi)} $\div_{x'}\div_{x'}{\PINK\firstI{\sigma}\BLACK} = 0 \text{ in } \omega$,
\end{enumerate}
where ${\PINK\zerothI{\Sigma}\BLACK},\, {\PINK\firstI{\Sigma}\BLACK} \in L^{\infty}(\omega \times \calY;\M^{2 \times 2}_{\sym})$ are the zeroth and first order moments of the $2 \times 2$ minor of $\Sigma$, \
$\sigma := \int_{\calY} \Sigma(\cdot,y) \,dy$, and ${\PINK\zerothI{\sigma}\BLACK},\, {\PINK\firstI{\sigma}\BLACK} \in L^{\infty}(\omega;\M^{2 \times 2}_{\sym})$ are the zeroth and first order moments of the $2 \times 2$ minor of $\sigma$.
\end{definition}

\begin{remark} \label{rem Linfty} 
Note that in the definition we could demand only that $\Sigma \in L^2(\Omega \times \calY;\M^{3 \times 3}_{\sym})$, since \ref{definition K^hom_0 (i)} and \ref{definition K^hom_0 (ii)} immediately imply that $\Sigma \in L^\infty(\Omega \times \calY;\M^{3 \times 3}_{\sym})$.
\end{remark}

The above definition is motivated by the following proposition.

\begin{proposition} \label{two-scale weak limit of admissible stress - regime zero}
Let $(\sigma^\eps,\, \chikin^\eps,\, \chiiso^\eps) \in \calK^\eps_{0}$ be such that
\begin{align*}
	\sigma^\eps \weaktwoscale \Sigma & \quad \text{two-scale weakly in $L^2(\Omega \times \calY;\M^{3 \times 3}_{\sym})$},
	\\ \chikin^\eps \strong 0 & \quad \text{strongly in $L^2(\Omega;\M^{3 \times 3}_{\dev})$},
	\\ \chiiso^\eps \strong 0 & \quad \text{strongly in $L^2(\Omega)$}.
\end{align*}
Then $\Sigma \in \calK^{\rm hom}_{0}$.
\end{proposition}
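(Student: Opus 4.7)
The plan is to verify each of the six conditions of \Cref{definition K^hom_0} separately for the two-scale weak limit $\Sigma$. Conditions (i), (v), and (vi) are essentially immediate. Since $\sigma^\eps_{i3} = 0$ for every $\eps$, the two-scale limit inherits $\Sigma_{i3} = 0$, yielding (i). For (v) and (vi), the two-scale weak convergence $\sigma^\eps \weaktwoscale \Sigma$ in $L^2(\Omega \times \calY;\M^{3\times 3}_{\sym})$ forces the ordinary weak convergence $\sigma^\eps \weak \sigma := \int_{\calY} \Sigma(\cdot,y)\,dy$ in $L^2(\Omega;\M^{3\times 3}_{\sym})$; taking zero-th and first order moments in $x_3$ yields $\bar{\sigma}^\eps \weak \bar{\sigma}$ and $\hat{\sigma}^\eps \weak \hat{\sigma}$ weakly in $L^2(\omega;\M^{2\times 2}_{\sym})$, so the macroscopic divergence identities in the definition of $\calK^{\eps}_{0}$ pass to the distributional limit. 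The $L^\infty$-regularity of $\Sigma$ demanded by \Cref{definition K^hom_0} is obtained by repeating the argument in \Cref{Admissible stress configurations}: the inclusion $\sigma^\eps_{\dev}+\chikin^\eps \in (1-\chiiso^\eps)K(x'/\eps)$ together with the uniform boundedness of $K(y)$ and the strong vanishing of $\chikin^\eps$ and $\chiiso^\eps$ bounds $\sigma^\eps_{\dev}$ in $L^\infty$, and combined with $\sigma^\eps_{i3}=0$ this transfers to a uniform $L^\infty$-bound on all components $\sigma^\eps_{\alpha\beta}$.

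For condition (ii) I would use the support-function characterisation of $K(y)$: $\xi \in K(y)$ if and only if $\xi : \tau \le \disspot(y,\tau)$ for every $\tau \in \M^{3 \times 3}_{\dev}$. Applying this pointwise to the $\eps$-level inclusion, evaluated at an admissible smooth test field $\tau = \tau(x,y)$, and integrating over $\Omega$ gives
\begin{equation*}
\int_{\Omega}(\sigma^\eps_{\dev}+\chikin^\eps) : \tau\!\left(x,\tfrac{x'}{\eps}\right)\,dx \;\le\; \int_{\Omega}(1-\chiiso^\eps)\,\disspot\!\left(\tfrac{x'}{\eps},\tau\!\left(x,\tfrac{x'}{\eps}\right)\right)\,dx.
\end{equation*}
Strong $L^2$-convergence of $\chikin^\eps$ and $\chiiso^\eps$ to zero, the two-scale convergence of $\sigma^\eps_{\dev}$, and the two-scale limit identity for the Borel-measurable, uniformly bounded integrand $\disspot(y,\tau(x,y))$ then yield
\begin{equation*}
\int_{\Omega\times\calY}\Sigma_{\dev}(x,y) : \tau(x,y)\,dx\,dy \;\le\; \int_{\Omega\times\calY}\disspot(y,\tau(x,y))\,dx\,dy.
\end{equation*}
A localisation argument exploiting the phase-wise constancy $K(y) \equiv K_i$ on each $\calY_i$ upgrades this integral inequality to the pointwise inclusion $\Sigma_{\dev}(x,y) \in K(y)$ at a.e.\ $(x,y) \in \Omega \times \calY$.

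For the cell-equilibrium conditions (iii) and (iv), I would test the macroscopic divergence constraints from $\calK^{\eps}_{0}$ against rapidly oscillating trial functions of the correct order in $\eps$. To produce the first-order cell equation on $\bar{\Sigma}$, test with $\phi_\eps(x') := \eps\,\psi(x',x'/\eps)$, $\psi \in C_c^\infty(\omega;C^\infty(\calY;\R^2))$; the chain rule $\E\phi_\eps(x') = \eps\,\E_{x'}\psi(x',x'/\eps) + \E_{y}\psi(x',x'/\eps)$ ensures that only the $\E_y\psi$ term survives in the two-scale limit, giving $\int_{\omega\times\calY}\bar{\Sigma} : \E_{y}\psi\,dx'\,dy = 0$ and hence $\div_{y}\bar{\Sigma}(x',\cdot) = 0$ in $\calY$ for a.e.\ $x'$ by a Fubini-type argument. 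For the second-order cell equation on $\hat{\Sigma}$, the analogous computation with $\varphi_\eps(x') := \eps^2\,\psi(x',x'/\eps)$ and the expansion $\nabla^2\varphi_\eps = \eps^2\nabla^2_{x'}\psi + \eps(\nabla_{x'}\otimes\nabla_{y}+\nabla_{y}\otimes\nabla_{x'})\psi + \nabla^2_{y}\psi$ again retains only the $\nabla^2_{y}\psi$-contribution in the limit, yielding $\div_{y}\div_{y}\hat{\Sigma}(x',\cdot) = 0$ in $\calY$.

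The main technical delicacy lies in (ii): $\disspot(y,\cdot)$ is only Borel-measurable and piecewise continuous in $y$ (jumping across the interfaces $\Gamma_{ij}$), so the two-scale passage to the limit in $\disspot(x'/\eps,\tau(x,x'/\eps))$ is not direct. It has to be carried out either phase-by-phase (using $K(y)=K_i$ on $\calY_i$) or via a Reshetnyak-type two-scale lower semicontinuity argument, analogous to the one already employed in the proof of \Cref{lower semicontinuity of energies - eps to 0}. Once this is in hand, the remaining steps are standard periodic-homogenisation corrector computations.
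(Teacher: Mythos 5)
Your proposal is correct in substance but follows a genuinely different route for the key condition (ii). The paper handles (ii) by an unfolding construction: it defines $\Sigma^\eps(x,y)$ by unfolding $\frac{\sigma^\eps+\chikin^\eps}{1-\chiiso^\eps}$ cell by cell, so that each $\Sigma^\eps$ lies in the fixed set $S=\{\Xi\in L^2(\Omega\times\calY;\M^{3\times3}_{\sym}):\Xi_{\dev}(x,y)\in K(y)\ \text{a.e.}\}$, and then concludes $\Sigma\in S$ because $S$ is convex and weakly closed and $\Sigma^\eps\weak\Sigma$. You instead dualize the inclusion through the support function, pass to the two-scale limit against smooth test fields $\tau(x,y)$, and localize phase by phase; this works (on each $\Omega\times\calY_i$ the set $K_i$ is fixed, the interfaces are $\calL^2_y$-null, and $y\mapsto\charfun{\calY_i}(y)\disspot_i(\tau(x,y))$ is bounded and a.e.\ continuous, so the oscillating integral converges without any Reshetnyak-type argument -- the difficulty you flag is milder than you suggest, since $\tau$ is a deterministic test field, not a measure). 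Your treatment of (i), (v), (vi) and of the cell equations (iii)--(iv) is essentially the paper's argument in disguise: the paper uses one combined Kirchhoff--Love-type oscillating test function $\varphi=\eps\,\phi'+\eps^2(\dots)$ and integrates by parts against the moments, while you test the zero-th and first moment equations separately with $\eps\,\psi(x',x'/\eps)$ and $\eps^2\,\psi(x',x'/\eps)$; the two computations are equivalent.

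One step as written is wrong, though easily repaired: you claim the inclusion $\sigma^\eps_{\dev}+\chikin^\eps\in(1-\chiiso^\eps)K(x'/\eps)$ gives a \emph{uniform $L^\infty$ bound} on $\sigma^\eps_{\dev}$. It does not, because $\chikin^\eps$ and $\chiiso^\eps$ vanish only strongly in $L^2$, not in $L^\infty$; pointwise one only gets $|\sigma^\eps_{\dev}|\le(1+|\chiiso^\eps|)R_K+|\chikin^\eps|$, which is an $L^2$ bound. The $L^\infty$ regularity required by \Cref{definition K^hom_0} should instead be read off the \emph{limit}: once (i) and (ii) are established, $\Sigma_{\dev}$ is bounded by $R_K$ a.e.\ and $\Sigma_{i3}=0$, hence $\Sigma\in L^\infty(\Omega\times\calY;\M^{3\times3}_{\sym})$ -- which is exactly how the paper concludes. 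With that correction your argument goes through.
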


\begin{proof}
Property \ref{definition K^hom_0 (i)} directly follows from the condition $\sigma^\eps_{i3} = 0$, while properties \ref{definition K^hom_0 (v)} and \ref{definition K^hom_0 (vi)} follow since ${\PINK\firstI{\sigma}\BLACK}$ and ${\PINK\zerothI{\sigma}\BLACK}$ are weak limits the zeroth and first order moments (respectively) of the $2 \times 2$ minor of $\sigma^\eps$. 

To prove \ref{definition K^hom_0 (ii)} we define (recall \eqref{cubes inside})
\begin{equation*} \label{approximating sequence for K^hom}
	\Sigma^\eps(x,y) = \sum_{i \in I_\eps(\ext{\omega})} \charfun{Q_\eps^i}\!\left(\frac{x'}{\eps}\right)\,\frac{\sigma^\eps(\eps i + \eps\calI(y),x_3)+\chikin^\eps(\eps i + \eps\calI(y),x_3)}{1-\chiiso^\eps(\eps i + \eps\calI(y),x_3)},
\end{equation*}
and consider the set
\begin{equation*}
	S = \{ \Xi \in L^2(\Omega \times \calY;\M^{3 \times 3}_{\sym}) : \Xi_{\dev}(x,y) \in K(y) \,\text{ for } \calL^{3}_{x} \otimes \calL^{2}_{y}\text{-a.e. } (x,y) \in \Omega \times \calY \}.
\end{equation*}
The construction of $\Sigma^\eps$ from $(\sigma^\eps,\, \chikin^\eps,\, \chiiso^\eps) \in \calK^\eps_{0}$ ensures that $\Sigma^\eps \in S$ and that $\Sigma^\eps \weak \Sigma \;\text{ weakly in } L^2(\Omega \times \calY;\M^{3 \times 3}_{\sym})$. 
Since compactness of $K(y)$ implies that $S$ is convex and weakly closed in $L^2(\Omega \times \calY;\M^{3 \times 3}_{\sym})$, we have that $\Sigma \in S$, which concludes the proof.
Now, properties \ref{definition K^hom_0 (i)} and \ref{definition K^hom_0 (ii)} imply that $\Sigma \in L^{\infty}$ (see \Cref{rem Linfty}).

Finally to prove \ref{definition K^hom_0 (iii)} and \ref{definition K^hom_0 (iv)} let $\phi \in C_c^{\infty}(\omega;C^{\infty}(\calY;\R^3))$ and consider the test function 
\begin{equation*}
	\varphi(x) 
	= \eps \!\left(\begin{array}{c} \phi_1(x',\frac{x'}{\eps}) \\ \phi_2(x',\frac{x'}{\eps}) \end{array}\right) 
	+ \eps^2 \!\left(\begin{array}{c} - x_3\,\partial_{x_1}\phi_3(x',\frac{x'}{\eps}) - \frac{x_3}{\eps}\partial_{y_1}\phi_3(x',\frac{x'}{\eps}) \\ - x_3\,\partial_{x_2}\phi_3(x',\frac{x'}{\eps}) - \frac{x_3}{\eps}\partial_{y_2}\phi_3(x',\frac{x'}{\eps}) \end{array}\right).
\end{equation*}
By a direct computation we infer
\begin{equation*}
	{\E}_{x'}\varphi(x) \strongtwoscale {\E}_{y}\phi^{1,2}(x',y) - x_3 {\D}^2_{y}\phi_3(x',y) \quad \text{two-scale strongly in } L^2(\Omega \times \calY;\M^{3 \times 3}_{\sym})
\end{equation*}
and
\begin{align*}
	& \int_{\Omega} \sigma^\eps(x) : \begin{pmatrix} {\E}_{x'}\varphi(x) & 0 \\ 0 & 0 \end{pmatrix} \,dx
	= \int_{\Omega} (\sigma^\eps)^{1,2}(x) : {\E}_{x'}\varphi(x) \,dx
	\\& = - \eps \int_{\omega} \div_{x'}{\PINK\zerothI{\sigma}\BLACK}^\eps(x') \cdot (\phi)^{1,2}\!\left(x',\frac{x'}{\eps}\right) \,dx' - \frac{1}{12} \eps^2 \int_{\omega} \div_{x'}\div_{x'}{\PINK\firstI{\sigma}\BLACK}^\eps(x') \cdot \phi_3\!\left(x',\frac{x'}{\eps}\right) \,dx'= 0.
\end{align*}
Hence, taking such a test function and passing to the limit, we get
\begin{equation*}
	\int_{\Omega \times \calY} \Sigma(x,y) : \begin{pmatrix} {\E}_{y}\phi' - x_3 {\D}^2_{y}\phi_3 & 0 \\ 0 & 0 \end{pmatrix} \,{\PINK dx\,dy\BLACK} = 0.
\end{equation*}
Suppose now that $\phi\!\left(x',y\right) = \psi^{(1)}(x')\,\psi^{(2)}(y)$ for $\psi^{(1)} \in C_c^{\infty}(\omega)$ and $\psi^{(2)} \in C^{\infty}(\calY;\R^3)$.
Then
\begin{equation*}
	\int_{\omega} \psi^{(1)}(x') \!\left(\int_{I \times \calY} \Sigma(x,y) : \begin{pmatrix} {\E}_{y}(\psi^{(2)})'(y) - x_3 {\D}^2_{y}\psi^{(2)}_3(y) & 0 \\ 0 & 0 \end{pmatrix} \,dx_3 dy\right) \,dx' = 0,
\end{equation*}
from which we deduce that, for a.e. $x' \in \omega$,
\begin{align*}
	0 & = \int_{I \times \calY} \Sigma(x,y) : \begin{pmatrix} {\E}_{y}(\psi^{(2)})'(y) - x_3 {\D}^2_{y}\psi^{(2)}_3(y) & 0 \\ 0 & 0 \end{pmatrix} \,dx_3 dy\\
	& = \int_{\calY} {\PINK\zerothI{\Sigma}\BLACK}(x',y) : {\E}_{y}(\psi^{(2)})'(y) \,dy - \frac{1}{12} \int_{\calY} {\PINK\firstI{\Sigma}\BLACK}(x',y) : {\D}^2_{y}\psi^{(2)}_3(y) \,dy\\
	& = - \int_{\calY} \div_{y}{\PINK\zerothI{\Sigma}\BLACK}(x',y) \cdot (\psi^{(2)})'(y) \,dy - \frac{1}{12} \int_{\calY} \div_{y}\div_{y}{\PINK\firstI{\Sigma}\BLACK}(x',y) \cdot \psi^{(2)}_3(y) \,dy.
\end{align*}
Thus, $\div_{y}{\PINK\zerothI{\Sigma}\BLACK}(x',\cdot) = 0 \text{ in } \calY$ and $\div_{y}\div_{y}{\PINK\firstI{\Sigma}\BLACK}(x',\cdot) = 0 \text{ in } \calY$. This finishes the proof. 
\end{proof}

\subsubsection{Stress-plastic strain duality on the cell} \label{section stress-plastic strain duality on the cell} 
The main result in this section is to prove the lower bound of plastic dissipation on the cell proved in \Cref{cell Hill's principle - regime zero}. 

Following the approach of \cite{Buzancic.Davoli.Velcic.2024.2nd} we introduce the set of admissible stresses on $I \times \calY$. 

\begin{definition} \label{definition K_0}
The set $\calK_{0}$ of admissible stresses is defined as the set of all elements $\Sigma \in L^2(I \times \calY;\M^{3 \times 3}_{\sym})$ satisfying:
\begin{enumerate}[label=(\roman*)]
	\item \label{definition K_0 (i)} $\Sigma_{i3}(x_3,y) = 0 \,\text{ for } i=1,2,3$,
	\item \label{definition K_0 (ii)} $\Sigma_{\dev}(x_3,y) \in K(y) \,\text{ for } \calL^{1}_{x_3} \otimes \calL^{2}_{y}\text{-a.e. } (x_3,y) \in I \times \calY$,
	\item \label{definition K_0 (iii)} $\div_{y}{\PINK\zerothI{\Sigma}\BLACK} = 0 \text{ in } \calY$,
	\item \label{definition K_0 (iv)} $\div_{y}\div_{y}{\PINK\firstI{\Sigma}\BLACK} = 0 \text{ in } \calY$,
\end{enumerate}
where ${\PINK\zerothI{\Sigma}\BLACK},\, {\PINK\firstI{\Sigma}\BLACK} \in L^2(\calY;\M^{2 \times 2}_{\sym})$ are the zeroth and the first order moments of the $2 \times 2$ minor of $\Sigma$.
\end{definition}

Recalling \eqref{K_r characherization}, by conditions \ref{definition K_0 (i)} and \ref{definition K_0 (ii)} we may identify $\Sigma \in \calK_{0}$ with an element of $L^{\infty}(I \times \calY;\M^{2 \times 2}_{\sym})$ such that $\Sigma(x_3,y) \in \red{K}(y) \,\text{ for } \calL^{1}_{x_3} \otimes \calL^{2}_{y}\text{-a.e. } (x_3,y) \in I \times \calY$.
Thus, in this regime it will be natural to define the family of admissible configurations by means of conditions formulated on $\M^{2 \times 2}_{\sym}$.

Next we introduce the set of admissible configurations on the cell $I \times \calY$.

\begin{definition} \label{definition A_0}
The family $\calA_{0}$ of admissible configurations is given by the set of quadruplets
\begin{equation*}
	\bar{u} \in BD(\calY), \qquad u_3 \in BH(\calY), \qquad E \in L^2(I \times \calY;\M^{2 \times 2}_{\sym}), \qquad P \in \Mb(I \times \calY;\M^{2 \times 2}_{\sym}),
\end{equation*}
such that
\begin{equation} \label{disss1}
	{\E}_{y}\bar{u} - x_3 {\D}^2_{y}u_3 = E \,\calL^{1}_{x_3} \otimes \calL^{2}_{y} + P \quad \textit{ in } I \times \calY.
\end{equation}
\end{definition}

\begin{remark} \label{dissipation on acell} 
Let $P \in \Mb(I \times \calY;\M^{2 \times 2}_{\sym})$ be any measure satisfying \eqref{disss1}. 
Then, from the structure theorems for $BD$ and $BH$ functions, we can see that, for any $C^1$-hypersurface $\calD \subseteq \calY$, there exist $\zeta \in \Mb^+(\calD)$ \footnote{Again{\RED,\BLACK}  the measure is actually {\RED a\BLACK} $L^1$ function on the interface, cf. \Cref{marinrem}{\RED.\BLACK}} and Borel maps $(\bar{c}, \hat{c}) \in L^1_{d\zeta}(\calD;\R^2) \times L^1_{d\zeta}(\calD)$, such that
\begin{equation} \label{P form on the interface without x'}
	P\mres{I \times \calD} = \big[ \bar{c}(y) \odot \nu(y) + x_3 \hat{c}(y) \, \nu(y) \otimes \nu(y) \big]\,\zeta \otimes \calL_{x_3}^1.
\end{equation}
Here $\nu$ denotes a continuous unit normal vector field to $\calD$. 
Recalling \eqref{R_ij definition}, we can then give meaning to the measure on $I \times (\Gamma_{ij} \setminus S)$ 
\begin{align*}
	\disspot_{ij}^{\rm red}(P) := \disspot_{ij}^{\rm red}\left(\nu,\bar{c},\hat{c}\right) \zeta \otimes \calL_{x_3}^1 \in \Mb^+(I \times (\Gamma_{ij} \setminus S)), 
\end{align*}
and consequently the measure $\disspot^{\rm red}(P)$ given by \eqref{R^red measure definition} is a well defined element of $\Mb^+(I \times \calY)$.
\end{remark}

Recalling the definitions of zeroth and first order moments of functions and measures (see \Cref{moments of functions} and \Cref{moments of measures}), we introduce the following analogue of the duality between moments of stresses and plastic strains (see also \cite{Buzancic.Davoli.Velcic.2024.2nd}). 

\begin{definition}
Let 
$\Sigma \in \calK_{0}$
and let 
$(\bar{u}, u_3, E, P) \in \calA_{0}$.
We define the distributions $[ {\PINK\zerothI{\Sigma}\BLACK} : {\PINK\zerothI{P}\BLACK} ]$ and $[ {\PINK\firstI{\Sigma}\BLACK} : {\PINK\firstI{P}\BLACK} ]$ on $\calY$ by
\begin{gather}
\begin{split} \label{cell stress-strain duality zero moment - regime zero}
	[ {\PINK\zerothI{\Sigma}\BLACK} : {\PINK\zerothI{P}\BLACK} ](\varphi) :=
	- \int_{\calY} \varphi\,{\PINK\zerothI{\Sigma}\BLACK} : {\PINK\zerothI{E}\BLACK} \,dy 
	- \int_{\calY} {\PINK\zerothI{\Sigma}\BLACK} : \big( \bar{u} \odot \nabla_{y}\varphi \big) \,dy,
\end{split}\\
\begin{split} \label{cell stress-strain duality first moment - regime zero}
	[ {\PINK\firstI{\Sigma}\BLACK} : {\PINK\firstI{P}\BLACK} ](\varphi) :=
	- \int_{\calY} \varphi\,{\PINK\firstI{\Sigma}\BLACK} : {\PINK\firstI{E}\BLACK} \,dy 
	+ 2 \int_{\calY} {\PINK\firstI{\Sigma}\BLACK} : \big( \nabla_{y}u_3 \odot \nabla_{y}\varphi \big) \,dy 
	+ \int_{\calY} u_3\,{\PINK\firstI{\Sigma}\BLACK} : {\D}^2_{y}\varphi \,dy,
\end{split}
\end{gather}
for every $\varphi \in C^{\infty}(\calY)$.
\end{definition}

\begin{remark}
Note that the second integral in \eqref{cell stress-strain duality zero moment - regime zero} is well defined since $BD(\calY)$ is embedded into $L^2(\calY;\R^2)$. 
Similarly, the second and third integrals in \eqref{cell stress-strain duality first moment - regime zero} are well defined since $BH(\calY)$ is embedded into $H^1(\calY)$. 
Moreover, the definitions are independent of the choice of $(u, E)$, so \eqref{cell stress-strain duality zero moment - regime zero} and \eqref{cell stress-strain duality first moment - regime zero} define a meaningful distributions on $\calY$ (this is valid for arbitrary ${\PINK\zerothI{\Sigma}\BLACK}, {\PINK\firstI{\Sigma}\BLACK} \in L^\infty(\calY;\mathbb{M}_{\sym}^{2 \times 2})$ that satisfy the properties \ref{definition K_0 (iii)} and \ref{definition K_0 (iv)} of \Cref{definition K_0}).

Arguing as in \cite[Section 7]{Davoli.Mora.2013}, one can prove that $[ {\PINK\zerothI{\Sigma}\BLACK} : {\PINK\zerothI{P}\BLACK} ]$ and $[ {\PINK\firstI{\Sigma}\BLACK} : {\PINK\firstI{P}\BLACK} ]$ are bounded Radon measures on $\calY$. 
For ${\PINK\zerothI{\Sigma}\BLACK}$ of class $C^1$ and ${\PINK\firstI{\Sigma}\BLACK}$ of class $C^2$ it can be shown by integration by parts (see, e.g., \cite{Francfort.Giacomini.2012} and \cite[Remark 7.1, Remark 7.4]{Davoli.Mora.2015} that 
\begin{equation} \label{dualityadded1} 
	\int_{\calY} \varphi \,d[{\PINK\zerothI{\Sigma}\BLACK}:{\PINK\zerothI{P}\BLACK}] = \int_{\calY} \varphi {\PINK\zerothI{\Sigma}\BLACK} : d{\PINK\zerothI{P}\BLACK}, \quad 
	\int_{\calY} \varphi \,d[{\PINK\firstI{\Sigma}\BLACK}:{\PINK\firstI{P}\BLACK}] = \int_{\calY} \varphi {\PINK\firstI{\Sigma}\BLACK} : d{\PINK\firstI{P}\BLACK}. 
\end{equation} 
From this it follows that for ${\PINK\zerothI{\Sigma}\BLACK}$ of class $C^1$ and ${\PINK\firstI{\Sigma}\BLACK}$ of class $C^2$ we have 
\begin{equation} \label{dualityadded2} 
	\left|[{\PINK\zerothI{\Sigma}\BLACK}:{\PINK\zerothI{P}\BLACK}]\right| \leq \|{\PINK\zerothI{\Sigma}\BLACK}\|_{L^\infty}|{\PINK\zerothI{P}\BLACK}|, \quad 
	\left|[{\PINK\firstI{\Sigma}\BLACK}:{\PINK\firstI{P}\BLACK}]\right| \leq \|{\PINK\firstI{\Sigma}\BLACK}\|_{L^\infty}|{\PINK\firstI{P}\BLACK}|, \quad \varphi \in C(\calY).
\end{equation} 
Through the approximation by convolution \eqref{dualityadded1} then extends to arbitrary continuous ${\PINK\zerothI{\Sigma}\BLACK}$, ${\PINK\firstI{\Sigma}\BLACK}$ and \eqref{dualityadded2} applies to arbitrary ${\PINK\zerothI{\Sigma}\BLACK}, {\PINK\firstI{\Sigma}\BLACK} \in L^\infty(\calY;\mathbb{M}_{\sym}^{2 \times 2})$ satisfying the properties \ref{definition K_0 (iii)} and \ref{definition K_0 (iv)} of \Cref{definition K_0}.
\end{remark}

\begin{remark} \label{remadd1}
If $\alpha$ is a simple $C^2$ curve in $\calY$, then
\begin{equation} \label{dualityadded3} 
	[{\PINK\zerothI{\Sigma}\BLACK}:{\PINK\zerothI{P}\BLACK}] = {\PINK\zerothI{\Sigma}\BLACK}\nu \cdot [[\bar{u}]] \,\calH^{1}_{y} \quad \textrm{ on } \alpha, 
\end{equation} 
where $\nu$ is a unit normal on the curve $\alpha$, while $[[\bar{u}]] = \bar{u}^{+}-\bar{u}^{-}$ denotes the jump of $\bar{u}$ over $\alpha$, with $\bar{u}^{+}$ and $\bar{u}^{-}$ the traces on $\alpha$ of $\bar{u}$ ($\bar{u}^{+}$ is from the side toward which normal is pointing, $\bar{u}^{-}$ is from the opposite side). This can be obtained from \eqref{cell stress-strain duality zero moment - regime zero}, \eqref{dualityadded1}, \eqref{dualityadded2} and approximation by convolution, see e.g. \cite[Lemma 3.8]{Francfort.Giacomini.2012}. 

From \eqref{traces of the stress} it follows that if $U$ is an open set in $\calY$ whose boundary is of class $C^2$ (and that intersects $\alpha$) and ${\PINK\zerothI{\Sigma}\BLACK}_n \in L^\infty(U;\mathbb{M}_{\sym}^{2 \times 2})$ a bounded sequence such that ${\PINK\zerothI{\Sigma}\BLACK}_n \to {\PINK\zerothI{\Sigma}\BLACK}$ almost everywhere (and thus in $L^p(U)$, for every $p<\infty$) and $\div_y {\PINK\zerothI{\Sigma}\BLACK}_n\to 0$ strongly in $L^2(U)$, then ${\PINK\zerothI{\Sigma}\BLACK}_n \nu \weakstar {\PINK\zerothI{\Sigma}\BLACK}\nu$, weakly* in $L^{\infty}(K \cap \alpha)$ for any compact set $K \subset U$.
\end{remark}

\begin{remark} \label{remadd2}
It can be shown that if $\alpha \subset \calY$ is simple $C^2$ closed or non-closed $C^2$ curve with endpoints $\{a,b\}$ that there exists $b_1({\PINK\firstI{\Sigma}\BLACK}) \in L^{\infty} (\alpha) $ such that
\begin{equation} \label{curve1} 
	[{\PINK\firstI{\Sigma}\BLACK}:{\PINK\firstI{P}\BLACK}] = b_1({\PINK\firstI{\Sigma}\BLACK}) [[\partial_{\nu}u_3]] \,\calH^{1}_{y} \quad \textrm{ on } \alpha,
\end{equation} 
where $\nu$ is a unit normal on the curve $\alpha$, while $[[\partial_{\nu}u_3]] = \frac{\partial u_3^{+}}{\partial\nu} - \frac{\partial u_3^{-}}{\partial\nu}$ is a jump in the normal derivative of $u_3$ (from the side in the opposite direction of the normal), which is an $L^1 (\alpha)$ function. This
can be obtained by approximation of ${\PINK\firstI{\Sigma}\BLACK}$ by convolution from \eqref{cell stress-strain duality first moment - regime zero}, \eqref{dualityadded1}, \eqref{dualityadded2} 
and \cite[Th\'{e}oreme 2.1]{Demengel.1984}.

From \cite[Section 7.1]{Davoli.Mora.2013} and \cite[Th\'{e}oreme 2.1 and Appendice, Th\'{e}oreme 1]{Demengel.1984} it follows that if $U$ is an open set in $\calY$ whose boundary is of class $C^2$ (and that intersects $\alpha$) and ${\PINK\firstI{\Sigma}\BLACK}_n \in L^\infty(U;\mathbb{M}_{\sym}^{2 \times 2})$ a bounded sequence such that ${\PINK\firstI{\Sigma}\BLACK}_n \to {\PINK\firstI{\Sigma}\BLACK}$ almost everywhere (and thus in $L^p(U)$, for every $p<\infty$) and $\div_y \div_y {\PINK\firstI{\Sigma}\BLACK}_n\to 0$ strongly in $L^2(U)$, then $b_1({\PINK\firstI{\Sigma}\BLACK}_n) \weakstar b_1({\PINK\firstI{\Sigma}\BLACK})$, weakly* in $L^{\infty}(K \cap \alpha)$ for any compact set $K \subset U$. 
\end{remark}

We are now in a position to introduce a duality pairing between admissible stresses and plastic strains. 

\begin{definition}
Let 
$\Sigma \in \calK_{0}$
and let 
$(\bar{u}, u_3, E, P) \in \calA_{0}$. 
Then we can define a bounded Radon measure $[ \Sigma : P ]$ on $I \times \calY$ by setting
\begin{equation*}
	[ \Sigma : P ] := 
	[ {\PINK\zerothI{\Sigma}\BLACK} : {\PINK\zerothI{P}\BLACK} ] \otimes \calL^{1}_{x_3} 
	+ \frac{1}{12} [ {\PINK\firstI{\Sigma}\BLACK} : {\PINK\firstI{P}\BLACK} ] \otimes \calL^{1}_{x_3} 
	- \Sigma^\perp : E^\perp,
\end{equation*}
so that
\begin{align} \label{cell stress-strain duality - regime zero}
\begin{split}
	\int_{I \times \calY} \varphi \,d[ \Sigma : P ] 
	& = - \int_{I \times \calY} \varphi\,\Sigma : E \,dx_3 dy 
	- \int_{\calY} {\PINK\zerothI{\Sigma}\BLACK} : \big( \bar{u} \odot \nabla_{y}\varphi \big) \,dy\\
	& \,\quad + \frac{1}{6} \int_{\calY} {\PINK\firstI{\Sigma}\BLACK} : \big( \nabla_{y}u_3 \odot \nabla_{y}\varphi \big) \,dy 
	+ \frac{1}{12} \int_{\calY} u_3\,{\PINK\firstI{\Sigma}\BLACK} : {\D}^2_{y}\varphi \,dy,
\end{split}
\end{align}
for every $\varphi \in C^2(\calY)$.
\end{definition}

\begin{remark} \label{remmarin1} 
Notice that 
\[
	\zerothI{[ \Sigma : P ]} := 
	[ {\PINK\zerothI{\Sigma}\BLACK} : {\PINK\zerothI{P}\BLACK} ] 
	+ \frac{1}{12} [ {\PINK\firstI{\Sigma}\BLACK} : {\PINK\firstI{P}\BLACK} ] 
	- \zerothI{\Sigma^\perp : E^\perp}.
\]
\end{remark}

The following proposition will be used in \Cref{two-scale Hill's principle - regime zero}. 
Its proof requires to use the definition of the dissipation potential at the interface and thus one needs to use additional arguments when compared with \cite{Buzancic.Davoli.Velcic.2024.2nd}. We recall \Cref{dissipation on acell}.

\begin{proposition} \label{cell Hill's principle - regime zero}
Let $\Sigma \in \calK_{0}$ and $(\bar{u}, u_3, E, P) \in \calA_{0}$. 
If $\calY$ is a geometrically admissible multi-phase torus, we have 
\begin{equation} \label{inequality1}
	\zerothI{\,\disspot^{\rm red}(P)} \geq \zerothI{[\Sigma : P ]}.
\end{equation}
\end{proposition}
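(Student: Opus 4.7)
The starting point is the formula $\overline{[\Sigma:P]} = [\bar\Sigma:\bar P] + \tfrac{1}{12}[\hat\Sigma:\hat P] - \overline{\Sigma^\perp : E^\perp}$ from \Cref{remmarin1}. I would test both measures against a nonnegative $\varphi \in C^2(\calY)$ and establish the inequality separately on the phase interiors $\calY_i$ and on each interface $\Gamma_{ij}\setminus S$ (the set $S$ is $\calH^1$-negligible and contributes nothing). The core pointwise computation is that for any $y\in\calY_i$ and any $\bar\xi,\hat\xi\in\M^{2\times 2}_{\sym}$, the definitions of the zero-th and first order moments give
\[
\bar\Sigma(y):\bar\xi + \tfrac{1}{12}\hat\Sigma(y):\hat\xi \,=\, \int_I \Sigma^{1,2}(x_3,y) : \bigl(\bar\xi + x_3\hat\xi\bigr)\,dx_3,
\]
and since $\Sigma^{1,2}(x_3,y)\in\red{K}_i$ a.e., the right-hand side is bounded by $\int_I \red{\disspot}_i(\bar\xi + x_3\hat\xi)\,dx_3$. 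Applying this with $(\bar\xi,\hat\xi)$ chosen as Radon-Nikodym derivatives of $(\bar P,\hat P)$, and combining with the pointwise bound $\Sigma^{1,2}:E \leq \red{\disspot}_i(E)$ on the absolutely continuous part, produces the bulk estimate $\overline{[\Sigma:P]}\mres\calY_i \leq \overline{\disspot^{\rm red}_i(P)}\mres\calY_i$.

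\textbf{Interface estimate (main step).} On each interface $\Gamma_{ij}\setminus S$, which is a $C^2$-hypersurface, the structure theorems for $BD$ and $BH$ applied to $\bar u$ and $u_3$ via \eqref{disss1} yield a representation $\bar P\mres\Gamma_{ij} = \bar c\odot\nu\,\zeta$ and $\hat P\mres\Gamma_{ij} = \hat c\,\nu\otimes\nu\,\zeta$ for suitable $\zeta\in\Mb^+$ and Borel maps $\bar c,\hat c$. By \eqref{dualityadded3} and \eqref{curve1},
\[
\bigl([\bar\Sigma:\bar P] + \tfrac{1}{12}[\hat\Sigma:\hat P]\bigr)\mres\Gamma_{ij} \,=\, \bigl(\bar\Sigma\nu\cdot\bar c + \tfrac{1}{12}b_1(\hat\Sigma)\,\hat c\bigr)\zeta,
\]
with $\bar\Sigma\nu$ and $b_1(\hat\Sigma)$ unambiguous trace functions: the conditions $\div_y\bar\Sigma=0$ and $\div_y\div_y\hat\Sigma=0$ on the full torus enforce traction continuity $\bar\Sigma^i\nu=\bar\Sigma^j\nu$ and $b_1(\hat\Sigma^i)=b_1(\hat\Sigma^j)$ across $\Gamma_{ij}$. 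I would next approximate $\bar\Sigma|_{\calY_i},\hat\Sigma|_{\calY_i}$ by smooth tensor fields $\bar\Sigma^{(n)},\hat\Sigma^{(n)}$ preserving the convex constraint $\bar\Sigma^{(n)}\in\red{K}_i$ (by mollification inside $\calY_i$, using convexity of $\red{K}_i$) and with $\div\bar\Sigma^{(n)}\to 0$, $\div\div\hat\Sigma^{(n)}\to 0$ in $L^2(\calY_i)$, via the approximation procedure of \Cref{sub:traces}. Passing the interior bound to boundary traces by the weak-$*$ convergence statements in \Cref{remadd1} and \Cref{remadd2} then yields, for a.e.\ point of $\partial\calY_i\setminus S$ and any $\bar c'\in\R^2,\,\hat c'\in\R$,
\[
\bar\Sigma\nu\cdot\bar c' + \tfrac{1}{12}b_1(\hat\Sigma)\,\hat c' \,\leq\, \int_I \red{\disspot}_i\bigl(\bar c'\odot\nu + x_3\hat c'\,\nu\otimes\nu\bigr)\,dx_3,
\]
and analogously from the $j$-side. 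For any decomposition $(\bar c^i,\bar c^j,\hat c^i,\hat c^j)\in C(\bar c,\hat c)$, splitting $\bar\Sigma\nu\cdot\bar c + \tfrac{1}{12}b_1(\hat\Sigma)\hat c$ as $\sum_{k\in\{i,j\}}\bigl(\bar\Sigma\nu\cdot\bar c^k + \tfrac{1}{12}b_1(\hat\Sigma)\hat c^k\bigr)$ and applying the preceding bound on each side, then taking the infimum over $C(\bar c,\hat c)$, produces $\disspot^{\rm red}_{ij}(\nu,\bar c,\hat c)\,\zeta$, which is precisely the interface contribution to $\overline{\disspot^{\rm red}(P)}$ by \eqref{R_ij definition}.

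\textbf{Expected obstacle.} The most delicate step is the propagation of the admissibility constraint $\Sigma^{1,2}\in\red{K}_i$ from the interior of each phase to the boundary trace quantities $\bar\Sigma\nu$ and $b_1(\hat\Sigma)$ in a form compatible with the inf-convolution structure of $\disspot^{\rm red}_{ij}$. This requires an approximation procedure that simultaneously preserves the convex constraint and delivers $L^2$-control of divergences near $\partial\calY_i$, handled via the trace machinery of \Cref{sub:traces} together with the regularity assumption on $\Gamma\setminus S$. Once this is set up, the algebraic manipulation with the decomposition in $C(\bar c,\hat c)$ and the passage to the infimum are routine.
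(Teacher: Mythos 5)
Your plan follows essentially the same route as the paper: split the inequality into the phase interiors and the interfaces, use the jump representation of $\bar P$ and $\hat P$ coming from \eqref{disss1} and the structure theorems, rewrite $\overline{[\Sigma:P]}$ on $\Gamma_{ij}\setminus S$ as $\big(\bar\Sigma\nu\cdot\bar c+\tfrac{1}{12}b_1(\hat\Sigma)\,\hat c\big)\zeta$ via \eqref{dualityadded3} and \eqref{curve1}, bound each one-sided contribution by the support-function inequality for phase-wise approximations of $\Sigma$, pass to traces with \Cref{remadd1} and \Cref{remadd2}, and conclude with the splitting over $C(\bar c,\hat c)$ and the infimum defining $\disspot^{\rm red}_{ij}$ in \eqref{R_ij definition}. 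The moment identity and the continuity of $\bar\Sigma\nu$ and $b_1(\hat\Sigma)$ across the interface that you invoke are exactly the ingredients the paper uses.

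The genuine gap is the step you yourself flag but do not resolve: producing approximations of $\Sigma$ that retain the constraint \emph{up to the interface} while controlling the divergences there. Mollification "inside $\calY_i$" preserves membership in $(K_i)^{\rm red}$ only at points whose distance from $\partial\calY_i$ exceeds the mollification radius, so it yields nothing on $\Gamma_{ij}$, which is where the inequality is needed; and the approximation procedure of \Cref{sub:traces} only preserves the $L^\infty$ norm of the deviatoric part, not membership in the specific convex set $(K_i)^{\rm red}$, so the pointwise bound $\Sigma_n:\big(\bar c'\odot\nu+x_3\hat c'\,\nu\otimes\nu\big)\le\red{\disspot}_i\big(\bar c'\odot\nu+x_3\hat c'\,\nu\otimes\nu\big)$ is not available for those approximants. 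The paper closes exactly this point by covering each phase with finitely many sets $\calU^{(i)}_k$ such that $\calY_i\cap\calU^{(i)}_k$ is strongly star-shaped with (modified) $C^2$ boundary, and defining $\Sigma^{(i)}_{n,k}$ as a dilation toward the star center composed with mollification, see \eqref{prop4}; this keeps the values in $(K_i)^{\rm red}$ on all of $\closure{\calY}_i\cap\calU^{(i)}_k$, keeps $\div_y\bar\Sigma^{(i)}_{n,k}=0$ and $\div_y\div_y\hat\Sigma^{(i)}_{n,k}=0$ exactly, and provides the a.e.\ convergence and uniform $L^\infty$ bounds required by \Cref{remadd1} and \Cref{remadd2}; partitions of unity on $\alpha$ then glue the local estimates. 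Without such a construction, your claimed boundary inequality at a.e.\ point of $\partial\calY_i\setminus S$ is unproved. A secondary, repairable looseness: in the bulk you pair $\Sigma$ pointwise with $\bar P$, $\hat P$, which may be singular; one must first reduce $[\bar\Sigma:\bar P]$ and $[\hat\Sigma:\hat P]$ to genuine products via \eqref{dualityadded1} using the same within-phase mollification (interior, hence unproblematic) and only then apply the support-function inequality and pass to the limit, which is how the paper's Step 1 (quoting the companion paper) proceeds.
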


\begin{proof}
Inequality \eqref{inequality1} on an arbitrary phase $\calY_i$ can be written as
\begin{equation} \label{inequality on the phases}
	\zerothI{\,\disspot^{\rm red}_i\left(\frac{dP}{d|P|}\right) |P|} \geq \zerothI{[\Sigma : P ]},
\end{equation}
which follows from Step 1 of the proof of \cite[Proposition 5.9.]{Buzancic.Davoli.Velcic.2024.2nd}. 
We quickly sketch the proof, and note that this step is independent on the assumption on the ordering of the phases, assumed in that paper, thus the same proof works in our more general case as well.

Regularizing $\Sigma$ just by convolution with respect to $y$, we obtain a sequence $\{\Sigma_n\}$ of smooth tensors that converges strongly in $L^2(I \times \calY; \M^{2 \times 2}_{\sym})$ to $\Sigma$ and, for sufficiently large $n$, $\Sigma_n$ satisfies conditions from \Cref{definition K_0} for almost every point that is sufficiently far from the boundary $\partial\calY_i$. 
Next, the measure $P$ is decomposed as
\begin{equation*}
	P = {\PINK\zerothI{P}\BLACK} \otimes \calL^{1}_{x_3} + {\PINK\firstI{P}\BLACK} \otimes x_3 \calL^{1}_{x_3} + P^\perp,
\end{equation*}
where ${\PINK\zerothI{P}\BLACK}, {\PINK\firstI{P}\BLACK} \in \Mb(\calY;\M^{2 \times 2}_{\sym})$ and $P^\perp \in L^2(I \times \calY;\M^{2 \times 2}_{\sym})$. 
It follows that the total variation measure $|P|$ is absolutely continuous with respect to the measure
\begin{equation*}
	\Pi := |{\PINK\zerothI{P}\BLACK}| \otimes \calL^{1}_{x_3} + |{\PINK\firstI{P}\BLACK}| \otimes \calL^{1}_{x_3} + \calL^{3}_{x_3,y}.
\end{equation*}
Consequently, for $|\Pi|$-almost every $(x_3,y) \in I \times \calY_i$ sufficiently far from the boundary $\partial\calY_i$ and for sufficiently large $n$, the inequality
\begin{equation}
	\disspot^{\rm red}_i\left(\frac{dP}{d|\Pi|}\right) \geq \Sigma_n : \frac{dP}{d|\Pi|}
\end{equation}
holds. 
Finally, integrating over $I \times \calY_i$ with respect to $d|\Pi|$ and passing to $d|P|$, as well as using the convergence properties of moments of $\Sigma_n$ and duality formulas, it is easily seen that for every non-negative test function $\varphi \in C_c(\calY_i)$ we obtain the desired inequality
\begin{equation*}
	\int_{I \times \calY_i} \varphi(y)\,\disspot^{\rm red}_i\left(\frac{dP}{d|P|}\right) \,d|P| \geq \int_{I \times \calY_i} \varphi \,d[ \Sigma : P ].
\end{equation*}
which proves the inequality \eqref{inequality on the phases} on $\calY_i$.

In order to prove the inequality \eqref{inequality1} on the interfaces, we consider a curve $\alpha$ that is of class $C^2$ (together with its possible endpoints) which is a connected component of $\Gamma \backslash S$. 
The points on $\alpha$ (with the exception of the possible endpoints) belong to the intersection of the boundary of exactly two phases $\partial \calY_i \cap \partial \calY_j$.
Since $P = {\PINK\zerothI{P}\BLACK} +x_3 {\PINK\firstI{P}\BLACK}$ on $\alpha$, from the continuity of $u_3$ we find 
\begin{align}
	\label{bar P on curve alpha} & {\PINK\zerothI{P}\BLACK} = [[\bar{u}]] \odot \nu \,\calH^{1}_{y} \quad \textrm{ on } \alpha, 
	\\\label{hat P on curve alpha} & {\PINK\firstI{P}\BLACK} = [[\partial_{\nu}u_3]] \nu \odot \nu \,\calH^{1}_{y} \quad \textrm{ on } \alpha,
\end{align}
where $\nu$ is a normal at the interface, $[[\bar{u}]]$ is the jump of $\bar{u}$ and $[[\partial_{\nu}u_3]]$ is a jump in the normal derivative of $u_3$ on $\alpha$ from $\calY_i$ and $\calY_j$, respectively. 
From \eqref{dualityadded3} and \eqref{curve1} (cf. \Cref{remmarin1}) we deduce 
\begin{equation} 
\label{dualcurve} 
	\zerothI{[\Sigma:P]} = \left( {\PINK\zerothI{\Sigma}\BLACK}\nu \cdot [[\bar{u}]] + \frac{1}{12} b_1({\PINK\firstI{\Sigma}\BLACK}) [[\partial_{\nu}u_3]] \right) \,\calH^{1}_{y} \quad \textrm{ on } \alpha. 
\end{equation}
In order to prove the claim we will approximate the stress tensor $\Sigma$ with smooth ones locally around $\alpha$ in two different ways, once using the values from one phase, the other time from the other phase, obtaining in that way the approximations that take values only in $K^{\rm red}_i$, i.e. $K^{\rm red}_j$, where $i$ and $j$ are two different phases that meet on $\alpha$. 

Since $\calY_i$ and $\calY_j$ are bounded open sets with piecewise $C^2$ boundary (in particular, with Lipschitz boundary) by \cite[Proposition 2.5.4]{Carbone.DeArcangelis.2002} there exist finite open coverings $\{\calU^{(i)}_k\}$ of $\closure{\calY}_i$ and $\{\calU^{(j)}_l\}$ of $\closure{\calY}_j$ such that $\calY_i \cap \calU^{(i)}_k$ and $\calY_j \cap \calU^{(j)}_l$ are (strongly) star-shaped with Lipschitz boundary
(moreover, those $\calU^{(i)}_k$ and $\calU^{(j)}_l$ that intersect the boundary have cylindrical form up to rotation). 
We take only those members of the covering that have non-empty intersection with $\alpha$, and we can easily modify these cylindrical sets to be of class $C^2$. 
Let $\{\psi^{(i)}_k\}$ be a smooth partition of unity of $\alpha$ subordinate to the covering $\{\calU^{(i)}_k\}$, i.e. $0 \leq \psi^{(i)}_k\leq 1$, $\supp(\psi_k^{(i)}) \subset \calU_k^{(i)}$ and $\sum_{k} \psi^{(i)}_k = 1$ on $\alpha$ and let $\varphi \in C_c(\alpha)$ be an arbitrary non-negative function. 
For each $k$ we define an approximation of the stress $\Sigma$ on $\calY_i \cap \calU^{(i)}_k$ 
by
\begin{equation} \label{prop4}
	\Sigma^{(i)}_{n,k}(x_3,y) := \big(\left( \Sigma \circ d_{n,k}^{(i)} \right)(x_3,\cdot) \ast \rho_{\frac{1}{n+1}}\big)(y),
\end{equation}
where 
$\rho$ is the standard mollifier,
$d^{(i)}_{n,k}(x_3,y) = \left( x_3,\tfrac{n}{n+1}(y-y_k^{(i)})+y_k^{(i)}\right)$ and $y_k^{(i)}$ is the point with respect to which $\calY_i \cap \calU^{(i)}_k$ is star shaped. 
Obviously one has for every $k$ 
\begin{enumerate}[label=(\roman*)]
\item $\Sigma_{n,k}^{(i)} \in (K_i)^{\rm red}$ for a.e. $(x_3,y) \in I \times (\closure{\calY}_i\cap \calU^{(i)}_k)$,
\item $\|\Sigma^{(i)}_{n,k}\|_{L^\infty} \leq \|\Sigma\|_{L^{\infty}(\calY_i \cap \calU^{(i)}_k)} $, 
\item $\Sigma^{(i)}_{n,k} \to \Sigma$,\, ${\PINK\zerothI{\Sigma}\BLACK}^{(i)}_{n,k} \to {\PINK\zerothI{\Sigma}\BLACK}$,\, ${\PINK\firstI{\Sigma}\BLACK}^{(i)}_{n,k} \to {\PINK\firstI{\Sigma}\BLACK}$ strongly in $L^2(\closure{\calY}_i\cap \calU^{(i)}_k;\mathbb{M}_{\sym}^{2 \times 2})$,
\item $\div_y {\PINK\zerothI{\Sigma}\BLACK}_{n,k}^{(i)}=0$,\, $\div_y \div_y{\PINK\firstI{\Sigma}\BLACK}_{n,k}^{(i)}=0$. 
\end{enumerate}
Similarly, let $\{\psi^{(j)}_l\}$ be a partition of unity of $\alpha$ subordinate to the covering $\{\calU^{(j)}_l\}$, and for each $l$ we define an approximation $\Sigma^{(j)}_{n,l}$ of the stress $\Sigma$ on $\calY_j \cap \calU^{(j)}_l$ which satisfies the analogue properties to (i)-(iv).

Using \eqref{bar P on curve alpha} and \eqref{hat P on curve alpha}, we conclude for every $k$ and $l$
\begin{align*}
	& \int_{I \times \alpha} \psi_k^{i}(y)\psi_l^{j}(y)\varphi(y)\,d\disspot^{\rm red}(P) 
	= \int_{\alpha} \psi_k^{i}(y)\psi_l^{j}(y)\varphi(y) \disspot^{\rm red}_{ij}\!\left(\nu,[[\bar{u}]], [[\partial_{\nu}u_3]]\right) \,d\calH^{1}_{y}
	\\& = \int_{I \times \alpha} \psi_k^{i}(y)\psi_l^{j}(y)\varphi(y) \!\Big( \disspot_i^{\rm red}\!\big(\bar{c}^i(y) \odot \nu + x_3 \hat{c}^i(y)\, \nu \otimes \nu\big) + \disspot_j^{\rm red}\!\big(\bar{c}^j(y) \odot \nu + x_3 \hat{c}^j(y)\, \nu \otimes \nu\big) \!\Big) \,d\calH^{2}_{x_3,y},
\end{align*}
where $(\bar{c}^i(y),\bar{c}^j(y),\hat{c}^i(y),\hat{c}^j(y))$ are such that the minimum is attained in \eqref{R_ij definition} (see \Cref{minimum attained}, it is not difficult to argument measurability). 
In particular, $\bar{c}^i+\bar{c}^j = [[\bar{u}]]$ and $\hat{c}^i+\hat{c}^j = [[\partial_{\nu^i}u_3]]$ on $\supp\psi_k^{i} \cap \supp\psi_l^{j}$.

From $\Sigma_{n,k}^{(i)} \in (K_i)^{\rm red}$ and $\Sigma_{n,l}^{(j)} \in (K_j)^{\rm red}$ a.e. on $\supp\psi_k^{i} \cap \supp\psi_l^{j}$, it follows
\begin{align*}
	\disspot_i^{\rm red}\!\big(\bar{c}^i(y) \odot \nu + x_3 \hat{c}^i(y)\, \nu \otimes \nu\big) \geq \Sigma_{n,k}^{(i)} : \big(\bar{c}^i(y) \odot \nu + x_3 \hat{c}^i(y)\, \nu \otimes \nu\big),\\
	\disspot_j^{\rm red}\!\big(\bar{c}^j(y) \odot \nu + x_3 \hat{c}^j(y)\, \nu \otimes \nu\big) \geq \Sigma_{n,l}^{(j)} : \big(\bar{c}^j(y) \odot \nu + x_3 \hat{c}^j(y)\, \nu \otimes \nu\big).
\end{align*}
Next, by using \Cref{remadd1}, \Cref{remadd2} and \eqref{dualcurve} we have
\begin{align*}
	& \int_{I \times \alpha} \psi_k^{i}(y)\psi_l^{j}(y)\varphi(y)\,d\disspot^{\rm red}(P) 
	\\& \geq \int_{I \times \alpha} \psi_k^{i}(y)\psi_l^{j}(y)\varphi(y) \Big( \Sigma_{n,k}^{(i)} : \big(\bar{c}^i(y) \odot \nu + x_3 \hat{c}^i(y)\, \nu \otimes \nu\big) + \Sigma_{n,l}^{(j)} : \big(\bar{c}^j(y) \odot \nu + x_3 \hat{c}^j(y)\, \nu \otimes \nu\big) \!\Big) \,d\calH^{2}_{x_3,y}
	\\& = \int_{\alpha} \psi_k^{i}(y)\psi_l^{j}(y)\varphi(y) \Big( {\PINK\zerothI{\Sigma}\BLACK}_{n,k}^{(i)}\nu \cdot \bar{c}^i(y) + \frac{1}{12} b_1\left({\PINK\firstI{\Sigma}\BLACK}_{n,k}^{(i)}\right) \hat{c}^i(y) + {\PINK\zerothI{\Sigma}\BLACK}_{n,l}^{(j)}\nu \cdot \bar{c}^j(y) + \frac{1}{12} b_1\left({\PINK\firstI{\Sigma}\BLACK}_{n,l}^{(j)}\right) \hat{c}^j(y) \!\Big) \,d\calH^{1}_{y}
	\\& \to \int_{\alpha} \psi_k^{i}(y)\psi_l^{j}(y)\varphi(y) \left( {\PINK\zerothI{\Sigma}\BLACK}\nu \cdot [[\bar{u}]] + \frac{1}{12} b_1({\PINK\firstI{\Sigma}\BLACK}) [[\partial_{\nu}u_3]] \right) d\calH^{1}_{y}
	= \int_{\alpha} \psi_k^{i}(y)\psi_l^{j}(y)\varphi(y) \,d\zerothI{[\Sigma:P]}.
\end{align*}
Note that in the above computations we used that for both of these approximations we have 
\begin{eqnarray*} 
	& &{\PINK\zerothI{\Sigma}\BLACK}_{n,k}^{(i)} \nu \weakstar {\PINK\zerothI{\Sigma}\BLACK}\nu, \qquad {\PINK\zerothI{\Sigma}\BLACK}_{n,l}^{(j)} \nu \weakstar {\PINK\zerothI{\Sigma}\BLACK}\nu, \\
	& & b_1({\PINK\firstI{\Sigma}\BLACK}_{n,k}^{(i)}) \weakstar b_1({\PINK\firstI{\Sigma}\BLACK}), \quad b_1({\PINK\firstI{\Sigma}\BLACK}_{n,l}^{(j)}) \weakstar b_1({\PINK\firstI{\Sigma}\BLACK}) \quad \text{weakly* in $L^{\infty}(\calU_k^{(i)} \cap \calU_l^{(j)} \cap K\cap \alpha)$}, 
\end{eqnarray*} 
for every compact $K \subset \calU_k^{(i)} \cap \calU_l^{(j)} $ as a consequence of \Cref{remadd1} and \Cref{remadd2}. 
By summing over $k$ and $l$ we infer \eqref{inequality1} on $\alpha$.

The final claim goes by using the fact that both measures in \eqref{inequality1} are zero on $\mathcal{S}$ as a consequence of \eqref{disss1} and \eqref{dualityadded2}. 
\end{proof}

\subsubsection{Disintegration of admissible configurations and the principle of maximum plastic work} \label{subs:dis}
The main result of this section is \Cref{two-scale Hill's principle - regime zero} and \Cref{two-scale dissipation and plastic work inequality}. 
\Cref{two-scale dissipation and plastic work inequality} is the fundamental part of \Cref{main result 2} when proving global stability of two-scale limit. 
In order to prove \Cref{two-scale Hill's principle - regime zero} one needs the notion of two-scale duality between limit stresses given in \Cref{definition K^hom_0} and limit two-scale plastic strains given in \Cref{definition A^hom_0}. 
Equipped with the appropriate duality on the cell $I \times \calY$, one is then tempted to do this through disintegration given in \Cref{disintegration result - regime zero}. 
However, the problem is that $\Sigma$ {\RED does not\BLACK} have enough regularity in $x'$ variable (i.e. it is not continuous in $x'$) while $\eta$ from \Cref{disintegration result - regime zero} can concentrate somewhere on $\omega$. 
Thus we need to define this duality through proper mollification of $\Sigma$ and then by letting the limit. This is stated in \Cref{two-scale stress-strain duality - regime zero}. The strategy of this section corresponds to the strategy of \cite{Buzancic.Davoli.Velcic.2024.1st,Buzancic.Davoli.Velcic.2024.2nd} which relies on the approach of \cite{Francfort.Giacomini.2014} with proper modifications. 
 
Let us recall the disintegration result given in \cite[Lemma 5.18.]{Buzancic.Davoli.Velcic.2024.2nd}.

\begin{lemma} \label{disintegration result - regime zero}
Let $(u,E,P) \in \calA^{\rm hom}_{0}(w)$ with the associated $\mu \in \calXzero{\ext{\omega}}$, $\kappa \in \calYzero{\ext{\omega}}$, and let $\bar{u} \in BD(\ext{\omega})$ and $u_3 \in BH(\ext{\omega})$ be the Kirchhoff--Love components of $u$. 
Then there exists $\eta \in \Mb^+(\ext{\omega})$ such that the following disintegrations hold true:
\begin{align}
	\label{disintegration result 1 - regime zero} {\E}u \otimes \calL^{2}_{y} & = \left( A_1(x') + x_3 A_2(x') \right) \eta \otimes \calL^{1}_{x_3} \otimes \calL^{2}_{y},\\
	\label{disintegration result 2 - regime zero} E \,\calL^{3}_{x} \otimes \calL^{2}_{y} & = C(x') E(x,y) \,\eta \otimes \calL^{1}_{x_3} \otimes \calL^{2}_{y}\\
	\label{disintegration result 3 - regime zero} P & = \eta \genprod P_{x'}.
\end{align}
Above, $A_1, A_2 : \ext{\omega} \to \M^{2 \times 2}_{\sym}$ and $C : \ext{\omega} \to [0, +\infty]$ are respective Radon--Nikodym derivatives of ${\E}_{x'}\bar{u}$, $-{\D}^2_{x'}u_3$ and $\calL^{2}_{x'}$ with respect to $\eta$, $E(x,y)$ is a Borel representative of $E$, and $P_{x'} \in \Mb(I \times \calY;\M^{2 \times 2}_{\sym})$ for $\eta$-a.e. $x' \in \ext{\omega}$.
Furthermore, we can choose Borel maps $(x',y) \in \ext{\omega} \times \calY \mapsto \mu_{x'}(y) \in \R^2$ and $(x',y) \in \ext{\omega} \times \calY \mapsto \kappa_{x'}(y) \in \R$ such that, for $\eta$-a.e. $x' \in \ext{\omega}$,
\begin{equation} \label{disintegration result 4a - regime zero} 
	\mu = \mu_{x'}(y) \,\eta \otimes \calL^{2}_{y}, \quad {\E}_{y}\mu = \eta \genprod E_{y}\mu_{x'},
\end{equation}
\begin{equation} \label{disintegration result 4b - regime zero} 
	\kappa = \kappa_{x'}(y) \,\eta \otimes \calL^{2}_{y}, \quad {\D}^2_{y}\kappa = \eta \genprod {\D}^2_{y}\kappa_{x'},
\end{equation}
where $\mu_{x'} \in BD(\calY)$, $\int_{\calY} \mu_{x'}(y) \,dy = 0$ and $\kappa_{x'} \in BH(\calY)$, $\int_{\calY} \kappa_{x'}(y) \,dy = 0$.
\end{lemma}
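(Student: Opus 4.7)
The strategy is to construct a single dominating Radon measure $\eta \in \Mb^+(\ext{\omega})$ against which all the relevant objects on $\ext{\omega}$ admit Radon--Nikodym densities or disintegrations, and then to recover $P_{x'}$ algebraically from the admissibility constraint \eqref{admissible two-scale configurations - regime zero}.

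\textbf{Step 1 (Preliminary disintegrations).} First I would apply \Cref{corrector main property - regime zero} separately to $\mu \in \calXzero{\ext{\omega}}$ and $\kappa \in \calYzero{\ext{\omega}}$, obtaining $\eta_\mu, \eta_\kappa \in \Mb^+(\ext{\omega})$ and Borel maps $x' \mapsto \mu_{x'} \in BD(\calY)$, $x' \mapsto \kappa_{x'} \in BH(\calY)$ with zero mean, such that $\mu = \mu_{x'}(y)\,\eta_\mu \otimes \calL^2_y$, $E_y\mu = \eta_\mu \genprod E_y\mu_{x'}$, and analogously for $\kappa$.

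\textbf{Step 2 (Master measure).} Let $\pi^{x'}\colon \ext{\Omega}\times\calY \to \ext{\omega}$ denote the projection and set
\[
    \eta := \calL^{2}_{x'} + |E\bar{u}| + |\D^2 u_3| + \eta_\mu + \eta_\kappa + \pi^{x'}_{\#}|P| \ \in\ \Mb^+(\ext{\omega}).
\]
By construction, each of the measures $\calL^{2}_{x'}$, $E\bar{u}$, $\D^2 u_3$, $\eta_\mu$, $\eta_\kappa$ and the $x'$-marginal of $|P|$ is absolutely continuous with respect to $\eta$. I define the Radon--Nikodym densities
\[
    A_1 := \tfrac{dE\bar{u}}{d\eta}, \quad A_2 := -\tfrac{d\D^2u_3}{d\eta}, \quad C := \tfrac{d\calL^{2}_{x'}}{d\eta}, \quad h_\mu := \tfrac{d\eta_\mu}{d\eta}, \quad h_\kappa := \tfrac{d\eta_\kappa}{d\eta},
\]
and replace the slices $\mu_{x'}$, $\kappa_{x'}$ from Step~1 by $h_\mu(x')\mu_{x'}$ and $h_\kappa(x')\kappa_{x'}$ (set to zero where the densities vanish). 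A direct computation with the generalized product shows that \eqref{disintegration result 4a - regime zero}--\eqref{disintegration result 4b - regime zero} hold with $\eta$ in place of $\eta_\mu, \eta_\kappa$, and the zero-mean and $BD/BH$ properties persist since rescaling by a scalar density preserves them.

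\textbf{Step 3 (Identities \eqref{disintegration result 1 - regime zero}--\eqref{disintegration result 2 - regime zero}).} Since $u$ has Kirchhoff--Love structure, the measure $\E u$ (identified with an $\M^{2\times2}_{\sym}$-valued measure on $\ext{\Omega}$) equals $E\bar{u} \otimes \calL^{1}_{x_3} - x_3 \D^2 u_3 \otimes \calL^{1}_{x_3}$; substituting the densities from Step~2 yields \eqref{disintegration result 1 - regime zero}. Identity \eqref{disintegration result 2 - regime zero} is immediate from $\calL^{3}_{x} = (C\eta)\otimes\calL^{1}_{x_3}$ together with the choice of a Borel representative of $E$.

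\textbf{Step 4 (Disintegration of $P$).} Rearranging \eqref{admissible two-scale configurations - regime zero} gives
\[
    P = \E u \otimes \calL^{2}_{y} + E_y\mu - x_3\,\D^2_y\kappa - E\,\calL^{3}_{x}\otimes \calL^{2}_{y},
\]
where $E_y\mu$ and $\D^2_y\kappa$ (living on $\ext{\omega}\times\calY$) are implicitly tensorized by $\calL^{1}_{x_3}$, and $x_3$ acts as a bounded reweighting. Each of the four terms on the right is disintegrable against $\eta$ by Steps~2--3, hence so is the sum. Defining
\[
    P_{x'} := \bigl(A_1(x')+x_3 A_2(x')\bigr)\calL^{1}_{x_3}\otimes \calL^{2}_{y} + \bigl(E_y\mu_{x'} - x_3\,\D^2_y\kappa_{x'}\bigr)\otimes \calL^{1}_{x_3} - C(x')\,E(x',\cdot,\cdot)\,\calL^{1}_{x_3}\otimes \calL^{2}_{y},
\]
I obtain $P = \eta \genprod P_{x'}$ with $P_{x'} \in \Mb(I\times\calY;\M^{2\times2}_{\sym})$ for $\eta$-a.e.~$x'$; the required $\eta$-integrability of $|P_{x'}|(I\times\calY)$ follows from the control $\pi^{x'}_{\#}|P|\ll \eta$ built into $\eta$.

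\textbf{Main obstacle.} The delicate point is Step~2: one must ensure that passing from the ``native'' base measures $\eta_\mu,\eta_\kappa$ produced by \Cref{corrector main property - regime zero} to the common dominating measure $\eta$ is compatible with the generalized-product form of $E_y\mu$ and $\D^2_y\kappa$. This rests on the observation that the density $d\eta_\mu/d\eta$ (and similarly for $\kappa$) can be pulled inside the generalized product and absorbed into the $y$-slice, so that all the disintegrations are consistently written against the same $\eta$, which is precisely what allows the single identity $P=\eta \genprod P_{x'}$ to emerge from the admissibility constraint.
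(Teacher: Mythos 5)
Your argument is correct: building a single dominating measure $\eta$, absorbing the densities $d\eta_\mu/d\eta$ and $d\eta_\kappa/d\eta$ into the slices of $\mu$ and $\kappa$, and reading $P_{x'}$ off algebraically from \eqref{admissible two-scale configurations - regime zero} is exactly the standard disintegration argument, and the $\eta$-integrability needed for the generalized product is indeed available term by term (from $A_1$, $A_2$, $E$, $\mu_{x'}$, $\kappa_{x'}$ themselves, so adding $\pi^{x'}_{\#}|P|$ to $\eta$ is superfluous rather than the source of it). Note that the paper does not prove this lemma itself but recalls it from \cite[Lemma 5.18]{Buzancic.Davoli.Velcic.2024.2nd}, whose proof proceeds along essentially the same lines, so your proposal matches the intended approach.
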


\begin{remark} \label{P_x' form on the interface}
The measure $\eta$ can be modified in such a way that for a measure $P$ which satisfies \eqref{P form on the interface} we additionally have the disintegration
\begin{align}
	\label{disintegration result 5 - regime zero} \zeta & = \eta \genprod \zeta_{x'},
\end{align}
as well as the following equality
\begin{equation}
	P_{x'}\mres{I \times (\Gamma_{ij} \setminus S)} = \big[ \bar{c}(x',\cdot) \odot \nu + x_3 \hat{c}(x',\cdot) \, \nu \otimes \nu \big]\,\zeta_{x'} \otimes \calL_{x_3}^1.
\end{equation}
Here $\zeta_{x'} \in \calM_b^{+} (\Gamma_{ij} \setminus \calS) $. This can be easily seen by firstly disintegrating $\zeta$ and then adding to $\eta$ from \Cref{disintegration result - regime zero} the first component in the generalized product of disintegrated $\zeta$ and then doing the disintegration of both the measures in \Cref{disintegration result - regime zero} and $\zeta$ with respect to this new measure. 
\end{remark}

\begin{remark} \label{admissible configurations and disintegration - regime zero}
From the disintegration in \Cref{disintegration result - regime zero}, we have that, for $\eta$-a.e. $x' \in \ext{\omega}$, 
\begin{equation*}
	{\E}_{y}\mu_{x'} - x_3 {\D}^2_{y}\kappa_{x'} = \left[ C(x') E(x,y) - \left( A_1(x') + x_3 A_2(x') \right) \right] \calL^{1}_{x_3} \otimes \calL^{2}_{y} + P_{x'} \quad \textit{ in } I \times \calY.
\end{equation*}
Thus, the quadruplet
\begin{equation*}
	\left( \mu_{x'}, \kappa_{x'}, \left[ C(x') E(x,y) - \left( A_1(x') + x_3 A_2(x') \right) \right], P_{x'} \right)
\end{equation*}
is an element of $\calA_{0}$.
\end{remark}

The following proposition is a restatement of the results obtained in \cite[Proposition 5.30.]{Buzancic.Davoli.Velcic.2024.2nd}, which establishes the existence of a measure $\lambda$ defined through two-scale stress-strain duality. 

\begin{proposition} \label{two-scale stress-strain duality - regime zero}
Let $\Sigma \in \calK^{\rm hom}_{0}$ and $(u,E,P) \in \calA^{\rm hom}_{0}(w)$ with the associated $\mu \in \calXzero{\ext{\omega}}$, $\kappa \in \calYzero{\ext{\omega}}$. 
There exists an element $\lambda \in \Mb(\ext{\Omega} \times \calY)$ which can be constructed as the weak* limit of properly defined measures $\lambda_n$ as follows:
\begin{enumerate}[label=\arabic*.]
	\item There exists a finite open covering $\{U_{k}\}$ of $\closure{\omega}$ such that $\omega \cap U_{k}$ is (strongly) star-shaped with $C^2$ boundary, and a smooth partition of unity $\{\psi_{k}\}$ subordinate to the covering $\{U_{k}\}$.
	\item For each subdomain $\omega \cap U_{k}$ there exists a sequence $\{ \Sigma^{k}_n \} \subset C^\infty(\R^2;L^2(I \times \calY;\M^{3 \times 3}_{\sym}))$ such that $\Sigma^{k}_n(x',\cdot) \in \calK_{0}$ for every $x' \in \omega$,
	and there exists a sequence of well-defined bounded measures 
	\begin{equation*}
		\lambda^{k}_n := \eta \genprod [ \Sigma^{k}_n(x',\cdot) : P_{x'} ] \in \Mb((\widetilde{\omega} \cap U_{k}) \times I \times \calY),
	\end{equation*}
	such that
	\begin{equation*}
		\lambda^{k}_n \weakstar \lambda^{k} \quad \text{weakly* in $\Mb((\widetilde{\omega} \cap U_{k}) \times I \times \calY)$}.
	\end{equation*}
	\item The global measure $\lambda$ is defined on $\ext{\Omega} \times \calY$ by 
	letting, for every $\phi \in C_0(\ext{\Omega} \times \calY)$,
	\begin{equation*}
		\langle \lambda_n, \phi \rangle := \sum_{k} \langle \lambda^{k}_n, \psi_{k}\,\phi \rangle,
	\end{equation*}
	and
	\begin{equation*}
		\lambda_n \weakstar \lambda \quad \text{weakly* in $\Mb(\ext{\Omega} \times \calY)$}.
	\end{equation*}
\end{enumerate}
Furthermore, the mass of $\lambda$ is given by
\begin{equation} \label{mass of lambda - regime zero}
	\lambda(\ext{\Omega} \times \calY) = -\int_{\Omega \times \calY} \Sigma : E \,{\PINK dx\,dy\BLACK} + \int_{\omega} {\PINK\zerothI{\sigma}\BLACK} : {\E}\bar{w} \,dx' - \frac{1}{12} \int_{\omega} {\PINK\firstI{\sigma}\BLACK} : {\D}^2w_3 \,dx'.
\end{equation}
\end{proposition}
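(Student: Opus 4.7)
The plan is to reproduce the construction of \cite[Proposition 5.30]{Buzancic.Davoli.Velcic.2024.2nd} in the present setting, since the three enumerated items in the statement already prescribe the architecture. The work splits into: (a) covering $\closure{\omega}$ by strongly star-shaped $C^2$ subdomains, (b) locally smoothing $\Sigma$ to $\Sigma^k_n$ while preserving fibrewise membership in $\calK_{0}$, (c) defining fibrewise dualities and extracting weak* limits, and (d) computing the total mass.

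For (a), I will invoke \cite[Proposition 2.5.4]{Carbone.DeArcangelis.2002} to obtain a finite covering $\{U_k\}$ of $\closure{\omega}$ such that each $\omega \cap U_k$ is strongly star-shaped with respect to some $x_k^* \in \omega \cap U_k$; the locally cylindrical portions of $\partial(\omega \cap U_k)$ can then be perturbed to be of class $C^2$ without destroying star-shapedness, and a smooth subordinate partition of unity $\{\psi_k\}$ is standard. For (b), I will set
\[
\Sigma^k_n(x',y) := \bigl( \Sigma\bigl(\tfrac{n}{n+1}(x'-x_k^*)+x_k^*,\cdot\bigr) *_y \rho_{1/(n+1)}\bigr)(y),
\]
where $\rho$ is a standard mollifier on $\calY$. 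Strong star-shapedness forces the dilated argument of $\Sigma$ to lie at positive distance from $\partial(\omega \cap U_k)$, so $\Sigma^k_n \in C^\infty(\R^2;L^2(I\times\calY;\M^{3\times 3}_{\sym}))$. Convexity of $K(y)$ together with Jensen's inequality for the $y$-mollification preserves condition \ref{definition K_0 (ii)}; the conditions \ref{definition K_0 (i)}, \ref{definition K_0 (iii)} and \ref{definition K_0 (iv)} are linear and commute with both the $y$-mollification and the $x'$-dilation. Hence $\Sigma^k_n(x',\cdot)\in \calK_{0}$ for every $x'\in \omega \cap U_k$, and $\Sigma^k_n \to \Sigma$ strongly in $L^2$ with corresponding convergence of the moments $\bar{\Sigma}^k_n$, $\hat{\Sigma}^k_n$.

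For (c), \Cref{disintegration result - regime zero} yields the disintegration $P = \eta \genprod P_{x'}$, and by \Cref{admissible configurations and disintegration - regime zero} the fibre quadruple belongs to $\calA_{0}$ for $\eta$-a.e.\ $x'$. Thus the duality pairing $[(\Sigma^k_n)_{\dev}(x',\cdot) : P_{x'}]$ from Section \ref{torusstresstrain} is a bounded Radon measure on $I \times \calY$, with total variation controlled by $\|(\Sigma^k_n)_{\dev}\|_{L^\infty} |P_{x'}|(I \times \calY)$ through \eqref{dualityadded2}. This gives uniform bounds on the generalized product $\lambda^k_n$ in $\Mb((\omega\cap U_k)\times I\times\calY)$, from which weak* compactness extracts $\lambda^k$; glueing with $\{\psi_k\}$ produces $\lambda_n$ and, along a further subsequence, the global $\lambda$.

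For (d), testing $\lambda_n$ against the constant $1$ expands via \eqref{cell stress-strain duality - regime zero} into a fibrewise bulk part plus corrector contributions carrying $E_{y}\mu_{x'}$ and $\D^2_{y}\kappa_{x'}$. These correctors vanish after integration by parts in $y$ because of the admissibility conditions $\div_y\bar{\Sigma}^k_n = 0$ and $\div_y\div_y\hat{\Sigma}^k_n = 0$ built into $\calK_{0}$. Integrating the bulk part against $\eta$ and invoking \eqref{disintegration result 2 - regime zero}--\eqref{disintegration result 4b - regime zero}, then passing $n \to \infty$ using the strong $L^2$-convergence of $\Sigma^k_n$, produces $-\int_{\Omega\times\calY}\Sigma : E \,dx dy$ together with boundary-type contributions from the Dirichlet region $(\ext{\Omega}\setminus\closure{\Omega})\times\calY$ (where $u=w$, $E=Ew$, $P=0$). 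These last contributions collapse, via the global equilibrium $\div_{x'}\bar{\sigma} = 0$ and $\div_{x'}\div_{x'}\hat{\sigma} = 0$ from \Cref{definition K^hom_0}, to $\int_{\omega}\bar{\sigma}:\E\bar{w}\,dx' - \tfrac{1}{12}\int_{\omega}\hat{\sigma}:\D^2 w_3\,dx'$, which is exactly \eqref{mass of lambda - regime zero}. The hardest step is this bookkeeping in (d): one must organise the cancellations of correctors through the $y$-side equilibrium inside $\calK_{0}$ and simultaneously assemble the macroscopic boundary terms through the $x'$-side equilibrium inside $\calK^{\rm hom}_{0}$. Since no phase-ordering assumption enters this duality argument, the calculation proceeds verbatim as in \cite[Proposition 5.30]{Buzancic.Davoli.Velcic.2024.2nd}.
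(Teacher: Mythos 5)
There is a genuine problem in your step (b), and it sits exactly at the point this paper is designed to be delicate about. You smooth $\Sigma$ by a convolution in the cell variable $y$ and only \emph{dilate} in $x'$. First, this does not produce the required regularity: dilating the $x'$-argument does not make $x' \mapsto \Sigma^k_n(x',\cdot)$ belong to $C^\infty(\R^2;L^2(I\times\calY;\M^{3\times3}_{\sym}))$; smoothness in $x'$ can only come from a convolution \emph{in $x'$}, which is what the star-shaped dilation is there to make admissible (it pushes the argument to positive distance from $\partial(\omega\cap U_k)$ so the $x'$-mollification is well defined). Second, and more seriously, your appeal to Jensen's inequality to preserve condition \ref{definition K_0 (ii)} is incorrect: the constraint is $\Sigma_{\dev}(x_3,y)\in K(y)$ with $K(y)$ \emph{depending on $y$} (piecewise constant over the phases). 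A $y$-mollification near an interface $\Gamma_{ij}$ averages values lying in $K_i$ with values lying in $K_j$, and without any ordering of the phases --- precisely the assumption this paper drops --- the average need not lie in $K(y)$ at the point considered. This is why, in the cell-level regularization used for \Cref{cell Hill's principle - regime zero}, the convolution in $y$ is performed only \emph{inside a single phase} after a dilation toward the star-center of a star-shaped piece of that phase, never across $\Gamma$. Your global $y$-mollification on the torus would thus fail to give $\Sigma^k_n(x',\cdot)\in\calK_{0}$, which is explicitly required both by the statement and by the subsequent use of these fields in \Cref{two-scale Hill's principle - regime zero}.

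The repair is the construction actually behind \cite[Proposition 5.30]{Buzancic.Davoli.Velcic.2024.2nd} (which the paper simply invokes rather than reproves): leave the $y$-variable untouched and regularize only in $x'$, setting
\begin{equation*}
    \Sigma^{k}_n(x',\cdot) \,:=\, \int_{\R^2} \rho_{n}(x'-z')\, \Sigma\!\left(x_k^* + \tfrac{n}{n+1}(z'-x_k^*),\,\cdot\right) dz' ,
\end{equation*}
with $\rho_n$ a mollifier in $x'$ of sufficiently small radius relative to the dilation. For each fixed $y$ the constraint set $K(y)$ is the same for all the $x'$-arguments being averaged, so convexity of $\calK_{0}$ as a subset of $L^2(I\times\calY)$ (conditions \ref{definition K_0 (i)}, \ref{definition K_0 (iii)}, \ref{definition K_0 (iv)} are linear, \ref{definition K_0 (ii)} is convex pointwise in $(x_3,y)$) gives $\Sigma^k_n(x',\cdot)\in\calK_{0}$ for every $x'$, and the $x'$-convolution provides the $C^\infty(\R^2;L^2)$ regularity. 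With that correction, your steps (c) and (d) are sound: the fibrewise duality is well defined by \Cref{disintegration result - regime zero} and \Cref{admissible configurations and disintegration - regime zero}, the bound \eqref{dualityadded2} gives the uniform mass estimates needed for the weak* extraction, and the mass identity \eqref{mass of lambda - regime zero} follows as you outline, using the $y$-equilibrium inside $\calK_{0}$ to kill the corrector terms and the $x'$-equilibrium of $\bar\sigma$, $\hat\sigma$ together with $u=w$, $E=Ew$, $P=0$ on $(\ext{\Omega}\setminus\closure{\Omega})\times\calY$ to assemble the boundary terms.
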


The following theorem provides a lower bound for dissipation functional which generalizes \cite[Theorem 5.31.]{Buzancic.Davoli.Velcic.2024.2nd}.

\begin{theorem} \label{two-scale Hill's principle - regime zero}
Let $\Sigma \in \calK^{\rm hom}_{0}$ and $(u,E,P) \in \calA^{\rm hom}_{0}(w)$ with the associated $\mu \in \calXzero{\ext{\omega}}$, $\kappa \in \calYzero{\ext{\omega}}$. 
Then
\begin{equation*}
	\zerothI{\disspot^{\rm red}\!\left(P\right)} \geq {\PINK\zerothI{\lambda}\BLACK}, 
\end{equation*}
where $\lambda \in \Mb(\ext{\Omega} \times \calY)$ is given by \Cref{two-scale stress-strain duality - regime zero}.
\end{theorem}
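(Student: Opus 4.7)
The plan is to reduce the two-scale Hill's principle on $\ext{\Omega}\times\calY$ to the cell-level Hill's principle of \Cref{cell Hill's principle - regime zero} applied fiberwise in $x'$, via the disintegration structure of \Cref{disintegration result - regime zero} and the approximation procedure of \Cref{two-scale stress-strain duality - regime zero}. The key observation is that at $\eta$-a.e.\ $x'$ the quadruple $(\mu_{x'},\kappa_{x'},C(x')E(x,y)-(A_1(x')+x_3 A_2(x')),P_{x'})$ lies in $\calA_{0}$ (cf.\ \Cref{admissible configurations and disintegration - regime zero}), while for every $x'\in\omega$ the smooth stress $\Sigma^{k}_n(x',\cdot)$ used in \Cref{two-scale stress-strain duality - regime zero} belongs to $\calK_{0}$. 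Hence both hypotheses of \Cref{cell Hill's principle - regime zero} hold fiberwise, and the inequality can be propagated by integration against $\eta$.

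The first step is to establish a disintegration of the dissipation measure in the form
\begin{equation*}
    \overline{\disspot^{\rm red}(P)} \;=\; \eta \genprod \overline{\disspot^{\rm red}(P_{x'})} \qquad \text{on } \ext{\omega}\times\calY.
\end{equation*}
On the bulk parts $\ext{\Omega}\times\calY_i$, this follows directly from $P=\eta\genprod P_{x'}$ and the positive $1$-homogeneity of $\disspot_i^{\rm red}$ applied to the Radon--Nikodym derivative. On the interface parts $\ext{\omega}\times(\Gamma_{ij}\setminus S)$, \Cref{P_x' form on the interface} gives the compatible disintegration $\zeta=\eta\genprod\zeta_{x'}$ together with $P_{x'}\mres{I\times(\Gamma_{ij}\setminus S)}=[\bar c(x',\cdot)\odot\nu+x_3\hat c(x',\cdot)\,\nu\otimes\nu]\,\zeta_{x'}\otimes\calL^1_{x_3}$, so the inf-convolution density $\disspot^{\rm red}_{ij}(\nu,\bar c(x',y),\hat c(x',y))$ in \eqref{definition R^hom_0} is exactly the one obtained by applying $\disspot^{\rm red}$ to $P_{x'}$ and then integrating against $\eta$.

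Next, for each subdomain $\omega\cap U_{k}$ and each $n$, I would apply \Cref{cell Hill's principle - regime zero} at $\eta$-a.e.\ $x'\in\omega\cap U_k$ with $\Sigma=\Sigma^{k}_n(x',\cdot)\in\calK_{0}$ and the admissible quadruple above, obtaining
\begin{equation*}
    \overline{\disspot^{\rm red}(P_{x'})} \;\geq\; \overline{[\Sigma^{k}_n(x',\cdot):P_{x'}]} \qquad \text{as measures on } \calY.
\end{equation*}
Given any non-negative test function $\phi\in C_0(\ext{\omega}\times\calY)$, multiplying by $\psi_{k}(x')\phi(x',y)$ and integrating against $\eta$ yields
\begin{equation*}
    \int \psi_{k}\phi\,d\bigl(\eta\genprod\overline{\disspot^{\rm red}(P_{x'})}\bigr)
    \;\geq\;
    \int \psi_{k}\phi\,d\overline{\lambda^{k}_n}.
\end{equation*}
Summing in $k$, using $\sum_k\psi_k\equiv 1$ on $\closure{\omega}$ and $P=0$ outside $\Omega$, sending $n\to\infty$ via the weak$^*$ convergence $\lambda^{k}_n\weakstar\lambda^{k}$ (hence $\overline{\lambda^{k}_n}\weakstar\overline{\lambda^{k}}$), and assembling $\lambda$ as in \Cref{two-scale stress-strain duality - regime zero}, one obtains $\int\phi\,d\overline{\disspot^{\rm red}(P)}\geq\int\phi\,d\bar\lambda$ for every non-negative $\phi$, which is the desired inequality $\overline{\disspot^{\rm red}(P)}\geq\bar\lambda$.

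The main obstacle is the joint measurability/consistency in $x'$ of the fiberwise duality pairings $[\bar\Sigma^{k}_n(x',\cdot):\bar P_{x'}]$ and $[\hat\Sigma^{k}_n(x',\cdot):\hat P_{x'}]$: one needs to verify that the generalized product $\eta\genprod[\Sigma^{k}_n(x',\cdot)_{\dev}:P_{x'}]$ is a well-defined element of $\Mb((\omega\cap U_k)\times I\times\calY)$ and that the disintegration step commutes with the trace-based definition of the pairing (cf.\ Remarks \ref{remadd1}--\ref{remadd2}). This is precisely what is provided by the careful construction in \Cref{two-scale stress-strain duality - regime zero}, where the smoothness of $\Sigma^{k}_n$ in $x'$ and the star-shapedness of $\omega\cap U_k$ with $C^2$ boundary are used to ensure that the traces entering the duality depend continuously on $x'$, so that integration against $\eta$ is legitimate and the limit $n\to\infty$ passes through both sides.
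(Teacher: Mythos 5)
Your proposal is correct and follows essentially the same route as the paper: disintegrate $\disspot^{\rm red}(P)$ via $P=\eta\genprod P_{x'}$ (bulk through the compatibility of Radon--Nikodym derivatives, interface through $\zeta=\eta\genprod\zeta_{x'}$ from \Cref{P_x' form on the interface}), apply \Cref{cell Hill's principle - regime zero} fiberwise to the smooth stresses $\Sigma^{k}_n(x',\cdot)\in\calK_0$ and the admissible quadruple of \Cref{admissible configurations and disintegration - regime zero}, integrate against $\eta$ with the partition of unity $\{\psi_k\}$, and pass to the limit $n\to\infty$ using $\lambda_n\weakstar\lambda$. The measurability/consistency issues you flag are indeed exactly those resolved by the construction in \Cref{two-scale stress-strain duality - regime zero}, so no gap remains.
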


\begin{proof} 
Let $\{\psi_{k}\}$, $\{\Sigma^{k}_n\}$, $\{\lambda^{k}_n\}$ and $\lambda_n$ be defined as in \Cref{two-scale stress-strain duality - regime zero}. 
By \Cref{cell Hill's principle - regime zero}, we have for $\eta$-a.e. $x' \in \ext{\omega}$
\begin{equation} \label{pom step} 
	\int_{I \times \calY} \varphi(x',y)\,d\disspot^{\rm red}\!(P_{x'}) \geq \int_{I \times \calY} \varphi(x',y) \,d[ \Sigma^{k}_n(x',\cdot) : P_{x'} ], \ \text{ for every } \varphi \in C_{c}(\ext{\omega} \times \calY), \varphi \geq 0.
\end{equation}
Since $\frac{dP}{d|P|}(x,y) = \frac{dP_{x'}}{d|P_{x'}|}(x_3,y)$ for $|P_{x'}|$-a.e. $(x_3,y) \in I \times \calY$ by \cite[Proposition 2.2]{Buzancic.Davoli.Velcic.2024.1st}, we can conclude that, for every $i$, on $\ext{\Omega} \times \calY_i$ (recall \Cref{dissipation on acell})
\begin{align*}
	\disspot^{\rm red}(P) 
	& = \disspot^{\rm red}_i\!\left(\frac{dP}{d|P|}\right)\,|P| 
	= \eta \genprod \disspot^{\rm red}_i\!\left(\frac{dP}{d|P|}(x',\cdot)\right)\,|P_{x'}| 
	\\& = \eta \genprod \disspot^{\rm red}_i\!\left(\frac{dP_{x'}}{d|P_{x'}|}\right)\,|P_{x'}|
	= \sum_{k} \psi_{k} \eta \genprod \disspot^{\rm red}_i\!\left(\frac{dP_{x'}}{d|P_{x'}|}\right)\,|P_{x'}|
	\\& = \sum_{k} \psi_{k} \eta \genprod \disspot^{\rm red}(P_{x'}).
\end{align*}
Similarly, recalling \Cref{P_x' form on the interface}, on $\ext{\Omega} \times (\Gamma_{ij} \setminus S)$ we have
\begin{align*}
	\disspot^{\rm red}(P)
	& = \disspot_{ij}^{\rm red}\left(\nu,\bar{c},\hat{c}\right) \zeta \otimes \calL_{x_3}^1 
	= \eta \genprod \disspot_{ij}^{\rm red}\left(\nu,\bar{c}(x',\cdot),\hat{c}(x',\cdot)\right) \zeta_{x'} \otimes \calL_{x_3}^1
	\\& = \sum_{k} \psi_{k} \eta \genprod \disspot_{ij}^{\rm red}\left(\nu,\bar{c}(x',\cdot),\hat{c}(x',\cdot)\right) \zeta_{x'} \otimes \calL_{x_3}^1
	\\& = \sum_{k} \psi_{k} \eta \genprod \disspot^{\rm red}(P_{x'}).
\end{align*}
Consequently using \eqref{pom step}, 
\begin{align*}
	\int_{\ext{\Omega} \times \calY} \varphi(x',y)\,d\disspot^{\rm red}\!(P) 
	& = \sum_{k} \int_{\ext{\omega}} \psi_{k}(x') \!\left( \int_{I \times \calY} \varphi(x',y)\,d\disspot^{\rm red}\!(P_{x'}) \right)\! \,d\eta(x')
	\\& \geq \sum_{k} \int_{\ext{\omega}} \psi_{k}(x') \!\left( \int_{I \times \calY} \varphi(x',y) \,d[ \Sigma^{k}_n(x',\cdot) : P_{x'} ] \right)\! \,d\eta(x')
	\\& = \sum_{k} \int_{\ext{\Omega} \times \calY} \psi_{k}(x') \varphi(x',y) \,d\lambda^{k}_n(x,y) 
	= \int_{\ext{\Omega} \times \calY} \varphi(x',y) \,d\lambda_n.
\end{align*}
By passing to the limit, we infer the desired inequality.
\end{proof}

As a direct consequence of the previous theorem and \eqref{mass of lambda - regime zero}, we are now in a position to state a principle of maximum plastic work in our setting.

\begin{corollary} \label{two-scale dissipation and plastic work inequality}
For every $\Sigma \in \calK^{hom}_{0}$ and $(u,E,P) \in \calA^{hom}_{0}(w)$ we have
\begin{equation*}
	\calR^{hom}_{0}(P) \geq -\int_{\Omega \times \calY} \Sigma : E \,{\PINK dx\,dy\BLACK} + \int_{\omega} {\PINK\zerothI{\sigma}\BLACK} : {\E}_{{\RED x' \BLACK}}\bar{w} \,dx' - \frac{1}{12} \int_{\omega} {\PINK\firstI{\sigma}\BLACK} : {\D}^2_{{\RED x' \BLACK}}w_3 \,dx'.
\end{equation*}
\end{corollary}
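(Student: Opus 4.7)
The plan is to derive the corollary immediately from \Cref{two-scale Hill's principle - regime zero} combined with the mass identity \eqref{mass of lambda - regime zero}; the two-scale principle of maximum plastic work is essentially the scalar total-mass version of the measure-theoretic Hill-type inequality proved just above.

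First, \Cref{two-scale Hill's principle - regime zero} furnishes the ordering of bounded Radon measures
$$\overline{\disspot^{\rm red}(P)} \;\geq\; \bar{\lambda} \quad \text{in } \Mb(\ext{\omega} \times \calY).$$
I would test this inequality against the constant function $1$ on $\ext{\omega} \times \calY$, which yields the scalar inequality of total masses
$$\overline{\disspot^{\rm red}(P)}(\ext{\omega} \times \calY) \;\geq\; \bar{\lambda}(\ext{\omega} \times \calY).$$

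For the left-hand side, by the definition of the zero-th order moment in \Cref{moments of measures} and of $\calR^{\rm hom}_0$ in \eqref{definition R^hom_0},
$$\overline{\disspot^{\rm red}(P)}(\ext{\omega} \times \calY) \;=\; \disspot^{\rm red}(P)(\ext{\Omega} \times \calY) \;=\; \calR^{\rm hom}_0(P);$$
here the admissibility condition $P = 0$ on $(\ext{\Omega} \setminus \closure{\Omega}) \times \calY$ built into \Cref{definition A^hom_0} guarantees that no extraneous mass from the extended domain is collected on the left. For the right-hand side, again by \Cref{moments of measures}, $\bar{\lambda}(\ext{\omega} \times \calY) = \lambda(\ext{\Omega} \times \calY)$, and the mass formula \eqref{mass of lambda - regime zero} identifies this total mass with exactly
$$-\int_{\Omega \times \calY} \Sigma : E \,dx\,dy + \int_{\omega} \bar{\sigma} : \E\bar{w} \,dx' - \frac{1}{12} \int_{\omega} \hat{\sigma} : \D^2 w_3 \,dx'.$$
Chaining the three identities with the inequality above yields the claim.

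Since the two non-trivial inputs, namely the measure inequality in \Cref{two-scale Hill's principle - regime zero} and the mass computation in \Cref{two-scale stress-strain duality - regime zero}, are already established, no substantive obstacle remains. The only point requiring attention is the bookkeeping between a measure on $\ext{\Omega} \times \calY$ and its zero-th order moment on $\ext{\omega} \times \calY$ (so that testing against $1$ returns the same scalar on each side), which is immediate from \Cref{moments of measures} and the support condition on $P$.
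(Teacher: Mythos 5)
Your argument is correct and is exactly the paper's intended proof: the corollary is stated there as a direct consequence of \Cref{two-scale Hill's principle - regime zero} and the mass identity \eqref{mass of lambda - regime zero}, with only the bookkeeping between a measure and its zero-th order moment (total masses coincide) left implicit. Your filling in of that bookkeeping, including the identification $\overline{\disspot^{\rm red}(P)}(\ext{\omega}\times\calY)=\calR^{\rm hom}_0(P)$ and $\bar{\lambda}(\ext{\omega}\times\calY)=\lambda(\ext{\Omega}\times\calY)$, is accurate.
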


\subsection{Two-scale quasistatic evolution} \label{section two-scale quasistatic evolution} 

In this section we define the two-scale quasistatic evolution and prove the convergence result in \Cref{main result 2} as well as additional regularity result of the limit two-scale evolution given in \Cref{cor improved 2}. 
The associated $\calR^{\rm hom}_{0}$-variation of a function $P : [0,T] \to \Mb(\ext{\Omega} \times \calY;{\RED\M^{2 \times 2}_{\sym}\BLACK})$ on $[a,b]$ is defined as
\begin{equation*}
	\calV_{\calR^{\rm hom}_{0}}(P; a, b) := \sup\left\{ \sum_{i = 1}^{n-1} \calR^{\rm hom}_{0}\!\left(P(t_{i+1}) - P(t_i)\right) : a = t_1 < t_2 < \ldots < t_n = b,\ n \in \N \right\}.
\end{equation*}
As already said we prescribe for every $t \in [0, T]$ a boundary datum $w(t) \in H^1_{KL}(\ext{\Omega})$ and we assume the map $t\mapsto w(t)$ to be absolutely continuous from $[0, T]$ into $H^1(\ext{\Omega};\R^3)$.

We now give the notion of the limiting quasistatic elasto-plastic evolution.

\begin{definition} \label{two-scale quasistatic evolution}
A \emph{two-scale quasistatic evolution} for the boundary datum $w(t)$ is a function $t \mapsto (u(t), E(t), P(t))$ from $[0,T]$ into $BD_{KL}(\ext{\Omega}) \times L^2(\ext{\Omega} \times \calY;\M^{2 \times 2}_{{\RED\sym\BLACK}}) \times \Mb(\ext{\Omega} \times \calY;\M^{2 \times 2}_{\sym})$ which satisfies the following conditions:
\begin{enumerate}[label=(qs\arabic*)$^{\rm hom}_{0}$]
	\item \label{hom-qs S} for every $t \in [0,T]$ we have $(u(t), E(t), P(t)) \in \calA^{\rm hom}_{0}(w(t))$ and
	\begin{equation*}
		\calQ^{\rm hom}_{0}(E(t)) \leq \calQ^{\rm hom}_{0}(H) + \calR^{\rm hom}_{0}(\Pi-P(t)),
	\end{equation*}
	for every $(\upsilon,H,\Pi) \in \calA^{\rm hom}_{0}(w(t))$.
	\item \label{hom-qs E} the function $t \mapsto P(t)$ from $[0, T]$ into $\Mb(\ext{\Omega} \times \calY;\M^{2 \times 2}_{\dev})$ has bounded variation and for every $t \in [0, T]$
	\begin{equation*}
		\calQ^{\rm hom}_{0}(E(t)) + \calV_{\calR^{\rm hom}_{0}}(P; 0, t) = \calQ^{\rm hom}_{0}(E(0)) 
		+ \int_0^t \int_{\Omega \times \calY} \red{\C}(y) E(s) : {\E}\dot{w}(s) \,{\PINK dx\,dy\,ds\BLACK}.
	\end{equation*} 
\end{enumerate}
\end{definition}

Recalling the definition of a $h$-quasistatic evolution for the boundary datum $w(t)$ given in \Cref{h-quasistatic evolution}, we are in a position to formulate the main result of the paper.

\begin{theorem} \label{main result 2}
Let $t \mapsto w(t)$ be absolutely continuous from $[0,T]$ into $H^1_{KL}(\ext{\Omega})$. 
Let $\calY$ be a geometrically admissible multi-phase torus. 
Assume also \eqref{tensorassumption1}--\eqref{tensorassumption3} and \eqref{coercivity of R_i} and 
that there exists a sequence of triples $(u^\eps_0, e^\eps_0, p^\eps_0, \alpha^\eps_0) \in \calA_{0}^{\rm hard,\eps}(w(0))$ that satisfy \ref{eps-0-qs S} such that
\begin{align}
	& u^\eps_0 \weakstar u_0 \quad \text{weakly* in $BD(\ext{\Omega})$}, \label{main result u^eps_0 condition}\\
	& e^\eps_0 \strongtwoscale E_0 \quad \text{two-scale strongly in $L^2(\ext{\Omega} \times \calY;\M^{2 \times 2}_{\sym})$}, \label{main result e^eps_0 condition}\\
	& p^\eps_0 \weakstartwoscale P_0 \quad \text{two-scale weakly* in $\Mb(\ext{\Omega} \times \calY;\M^{3 \times 3}_{\dev})$}, \label{main result p^eps_0 condition}\\ \label{initial con}
	& \delta(\eps) \|p_0^\eps\|_{L^2(\Omega;\M^{3\times 3}_{\dev})} \to 0, \quad \delta(\eps) \|\alpha_0^\eps\|_{L^2(\Omega)} \to 0, 
\end{align}
for some $(u_0,E_0,P_0^{1,2}) \in \calA^{\rm hom}_{0}(w(0))$. 
For every $\eps > 0$, let 
\begin{equation*}
	t \mapsto (u^\eps(t), e^\eps(t), p^\eps(t), \alpha^{\eps}(t))
\end{equation*}
be a $\eps$-quasistatic evolution in the sense of \Cref{eps-limiting quasistatic evolution} for the boundary datum $w$ such that $u^\eps(0) = u^\eps_0$, $e^\eps(0) = e^\eps_0$, and $p^\eps(0) = p^\eps_0$, $\alpha^\eps(0) = \alpha^\eps_0$. 
We extend it on $\ext{\Omega}$ by setting $u^{\eps}=w$, $e^{\eps}={\E}w$, $p^{\eps}=\alpha^{\eps}=0$ on $\ext{\Omega} \setminus \Omega \times [0,T[$. 
Then, there exists a two-scale quasistatic evolution
\begin{equation*}
	t \mapsto (u(t), E(t), P(t)) 
\end{equation*}
for the boundary datum $w(t)$ such that $u(0) = u_0$,\, $E(0) = E_0$, and $P(0) = P_0^{1,2}$, and such that (up to subsequence)
for every $t \in [0,T]$ we have
\begin{align}
	& u^\eps(t) \weakstar u(t) \quad \text{weakly* in $BD(\ext{\Omega})$}, \label{main result 0 u^eps(t) - regime zero}\\
	& e^\eps(t) \weaktwoscale E(t) \quad \text{two-scale weakly in $L^2(\ext{\Omega} \times \calY;\M^{2 \times 2}_{\sym})$}, \label{main result e^eps(t) - regime zero}\\
	& (p^\eps)^{1,2}(t) \weakstartwoscale P(t) \quad \text{two-scale weakly* in $\Mb(\ext{\Omega} \times \calY;\M^{2 \times 2}_{\sym})$}. \label{main result p^eps(t) - regime zero}
\end{align}
\end{theorem}

\begin{proof}
The proof follows the lines of \Cref{main result 1}. 

\noindent{\bf Step 1: \em Compactness.}

Recalling \Cref{lema apriori estimates}, we infer the estimates
\begin{equation} \label{boundness in time 3}
	\calV(p^\eps; 0, T) \leq C,
\end{equation}
for every $\eps>0$. 
Hence, by a generalized version of Helly's selection theorem (see \cite[Lemma 7.2]{DalMaso.DeSimone.Mora.2006}), there exists a (not relabeled) subsequence, independent of $t$, and $P \in BV(0,T;\Mb(\ext{\Omega} \times \calY;\M^{3 \times 3}_{\dev}))$ such that
\begin{equation*}
	p^\eps(t) \weakstartwoscale P(t) \quad \text{two-scale weakly* in $\Mb(\ext{\Omega} \times \calY;\M^{3 \times 3}_{\dev})$},
\end{equation*}
for every $t \in [0,T]$, and $\calV(P; 0, T) \leq C$.
We extract a further subsequence (possibly depending on $t$),
\begin{align*}
	& u^{\eps_t}(t) \weakstar u(t) \quad \text{weakly* in $BD(\ext{\Omega})$},\\
	& e^{\eps_t}(t) \weaktwoscale E(t) \quad \text{two-scale weakly in $L^2(\ext{\Omega} \times \calY;\M^{3 \times 3}_{\sym})$},
\end{align*}
for every $t \in [0,T]$. Here $u(t) \in BD_{KL}(\ext{\Omega})$. 
Furthermore, according to \Cref{two-scale limit of symmetrized gradients} and \Cref{two-scale limit of Hessians}, one can choose the above subsequence in a way such that there exist $\mu(t) \in \calXzero{\ext{\omega}}$, $\kappa(t) \in \calYzero{\ext{\omega}}$ such that
\begin{equation*}
	{\E}u^{\eps_t}(t) \weakstartwoscale {\E}u(t) \otimes \calL^{2}_{y} + \begin{pmatrix} \begin{matrix} {\E}_{y}\mu(t) - x_3 {\D}^2_{y}\kappa(t) \end{matrix} & 0 \\ 0 & 0 \end{pmatrix}.
\end{equation*}
Since ${\E}_{x'}\bar{u}^{\eps_t}(t) - x_3 {\D}^2_{x'}u_3^{\eps_t}(t) = e^{\eps_t}(t) + (p^{\eps_t}(t))^{1,2}$ in $\ext{\Omega}$ for every $t \in [0,T]$, we deduce that $(u(t),E(t),P^{1,2}(t)) \in \calA^{\rm hom}_{0}(w(t))$.

Lastly, we consider for every $t \in [0,T]$
\begin{equation*}
	\sigma^{\eps_t}(t) := \red{\C}\left({\RED\frac{x'}{\eps_t}\BLACK}\right) e^{\eps_t}(t),
	\quad \chikin^{\eps_t}(t) := - \delta(\eps_t)\,\Hkin\!\left(\frac{x'}{\eps_t}\right) p^{\eps_t}(t)
	\;\text{ and }\; \chiiso^{\eps_t}(t) := - \delta(\eps_t)\,\Hiso\!\left(\frac{x'}{\eps_t}\right) \alpha^{\eps_t}(t).
\end{equation*}
Then we can choose (not relabeled) subsequences such that $\chikin^{\eps_t}(t)$ and {\RED$\chiiso^{\eps_t}(t)$\BLACK} tend to zero, and
\begin{equation} \label{main result sigma^eps(t)}
	\sigma^{\eps_t}(t) \weaktwoscale \Sigma(t) \quad \text{two-scale weakly in $L^2(\ext{\Omega} \times \calY;\M^{{\RED 3 \times 3 \BLACK}}_{\sym})$},
\end{equation}
where $\Sigma(t) := \red{\C}(y) E(t)$. 
Since $(\sigma^{\eps_t}(t),\, \chikin^{\eps_t}(t),\, \chiiso^{\eps_t}(t)) \in \calK^{\eps_t}_{0}$ for every $t \in [0,T]$, by \Cref{two-scale weak limit of admissible stress - regime zero} we can conclude $\Sigma(t) \in \calK^{\rm hom}_{0}$. 

\noindent{\bf Step 2: \em Global stability.}

Since from Step 1 we have $(u(t),E(t),P^{1,2}(t)) \in \calA^{\rm hom}_{0}(w(t)) $ with the associated $\mu(t) \in \calXzero{\ext{\omega}}$, $\kappa(t) \in \calYzero{\ext{\omega}}$, then for every $(\upsilon,H,\Pi) \in \calA^{\rm hom}_{0}(w(t)) $ with the associated $\nu(t) \in \calXzero{\ext{\omega}}$, $\lambda(t) \in \calYzero{\ext{\omega}}$ we have
\begin{equation*}
	(\upsilon-u(t),H-E(t),\Pi-P^{1,2}(t)) \in \calA^{\rm hom}_{0}(0).
\end{equation*}
Furthermore, since from the first step of the proof $\red{\C}(y) E(t) \in \calK^{\rm hom}_{0}$, by \Cref{two-scale dissipation and plastic work inequality} we have 
\begin{align*}
	\calR^{\rm hom}_{0}(\Pi-P^{1,2}(t)) & \geq -\int_{\Omega \times \calY} \red{\C}(y) E(t) : (H-E(t)) \,{\PINK dx\,dy\BLACK}\\
	& = \calQ^{\rm hom}_{0}(E(t)) + \calQ^{\rm hom}_{0}(H-E(t)) - \calQ^{\rm hom}_{0}(H),
\end{align*}
where the last equality is a straightforward computation. 
From the above, we immediately deduce
\begin{equation*}
	\calR^{\rm hom}_{0}(\Pi-P^{1,2}(t)) + \calQ^{\rm hom}_{0}(H) \geq \calQ^{\rm hom}(E(t)) + \calQ^{\rm hom}_{0}(H-E(t)) \geq \calQ^{\rm hom}_{0}(E(t)),
\end{equation*}
hence the global stability of the two-scale quasistatic evolution \ref{hom-qs S}.

We proceed by proving that the limit functions $u(t)$ and $E(t)$ do not depend on the subsequence. 
Assume $(\upsilon(t),H(t),P(t)) \in \calA^{\rm hom}_{0}(w(t)) $ with the associated $\nu(t) \in \calXzero{\ext{\omega}}$, $\lambda(t) \in \calYzero{\ext{\omega}}$ also satisfy the global stability of the two-scale quasistatic evolution. 
By the strict convexity of $\calQ^{\rm hom}_{0}$, we immediately obtain that
\begin{equation*}
	H(t) = E(t).
\end{equation*}
Identifying ${\E}u(t), {\E}\upsilon(t)$ with elements of $\Mb(\ext{\Omega};\M^{2 \times 2}_{\sym})$ and using \eqref{admissible two-scale configurations - regime zero}, we have that
\begin{align*}
	{\E}\upsilon(t) \otimes \calL^{2}_{y} + {\E}_{y}\nu(t) - x_3 {\D}^2_{y}\kappa(t) & = H(t) \,\calL^{3}_{x} \otimes \calL^{2}_{y} + P^{1,2}(t)\\
	& = E(t) \,\calL^{3}_{x} \otimes \calL^{2}_{y} + P^{1,2}(t)\\
	& = {\E}u(t) \otimes \calL^{2}_{y} + {\E}_{y}\mu(t) - x_3 {\D}^2_{y}\kappa(t).
\end{align*}
Integrating over $\calY$, we obtain
\begin{equation*}
	{\E}\upsilon(t) = {\E}u(t).
\end{equation*}
Using the variant of Poincar\'{e}--Korn's inequality, see \cite[Chapter II, Proposition 2.4]{Temam.1985}), we can infer that $\upsilon(t) = u(t)$ on $\ext{\Omega}$.

This implies that the whole sequences converge without depending on $t$, i.e.
\begin{align*}
	& u^h(t) \weakstar u(t) \quad \text{weakly* in $BD(\ext{\Omega})$},\\
	& e^\eps(t) \weaktwoscale E(t) \quad \text{two-scale weakly in $L^2(\ext{\Omega} \times \calY;\M^{3 \times 3}_{\sym})$}.
\end{align*}

\noindent{\bf Step 3: \em Energy balance.}

In order to prove the energy balance of the two-scale quasistatic evolution \ref{hom-qs E}, it is enough (by arguing as in, e.g. \cite[Theorem 4.7]{DalMaso.DeSimone.Mora.2006} and \cite[Theorem 2.7]{Francfort.Giacomini.2012}) to prove the energy inequality
\begin{align} \label{main result hom - step 3 inequality}
\begin{split}
	& \calQ^{\rm hom}_{0}(E(t)) + \calV_{\calR^{\rm hom}_{0}}(P^{1,2}; 0, t) \\
	& \leq \calQ^{\rm hom}_{0}(E(0)) 
	+ \int_0^t \int_{\Omega \times \calY} \red{\C}(y) E(s) : {\E}\dot{w}(s) \,{\PINK dx\,dy\,ds\BLACK}.
\end{split}
\end{align}

For a fixed $t \in [0,T]$, let us consider a subdivision $0 = t_1 < t_2 < \ldots < t_n = t$ of $[0,t]$. 
Due to the lower semicontinuity from \Cref{lower semicontinuity of energies - eps to 0}, we have 
\begin{align*}
	& \calQ^{\rm hom}_{0}(E(t)) + \sum_{i = 1}^{n} \calR^{\rm hom}_{0}\left(P(t_{i+1}) - P(t_i)\right)\\
	& \leq \liminf\limits_{\eps}\left( \calQ^{\eps}_{0}(e^\eps(t)) + \delta(\eps)\,\calQ^{\rm hard,\eps}_{0}(p^\eps(t), \alpha^\eps(t)) + \sum_{i = 1}^{n} \calR^{\eps}_{0}\left(p^\eps(t_{i+1}) - p^\eps(t_i)\right) \right)\\
	& \leq \liminf\limits_{\eps}\left( \calQ^{\eps}_{0}(e^\eps(t)) + \delta(\eps)\,\calQ^{\rm hard,\eps}_{0}(p^\eps(t), \alpha^\eps(t)) + \calV_{\calR^{\eps}_{0}}(p^\eps; 0, t) \right)\\
	& = \liminf\limits_{\eps}\left( \calQ^{\eps}_{0}(e^\eps(0)) + \delta(\eps)\,\calQ^{\rm hard,\eps}_{0}(p^\eps(0), \alpha^\eps(0))
	+ \int_0^t \int_{\Omega} \C\left(\tfrac{x'}{\eps}\right) e^\eps(s) : {\E}\dot{w}(s) \,dx ds \right),
\end{align*}
where the last equality follows from \ref{eps-0-qs E}. 
In view of the strong convergence assumed in \eqref{main result e^eps_0 condition}, \eqref{initial con} and \eqref{main result sigma^eps(t)}, by the Lebesgue's dominated convergence theorem and \Cref{lema apriori estimates} we infer
\begin{align*}
	& \lim\limits_{\eps}\left( \calQ^{\eps}_{0}(e^\eps(0)) + \delta(\eps)\,\calQ^{\rm hard,\eps}_{0}(p^\eps(0), \alpha^\eps(0)) \
	+ \int_0^t \int_{\Omega} \C\left(\tfrac{x'}{\eps}\right) e^\eps(s) : {\E}\dot{w}(s) \,dx ds \right)\\
	& = \calQ^{\rm hom}_{0}(E(0)) 
	+ \int_0^t \int_{\Omega \times \calY} \red{\C}(y) E(s) : {\E}\dot{w}(s) \,{\PINK dx\,dy\,ds\BLACK}.
\end{align*}
Hence, we have
\begin{align*}
	& \calQ^{\rm hom}_{0}(E(t)) + \sum_{i = 1}^{n} \calR^{\rm hom}_{0}\left(P^{1,2}(t_{i+1}) - P^{1,2}(t_i)\right) \\
	& \leq \calQ^{\rm hom}_{0}(E(0)) 
	+ \int_0^t \int_{\Omega \times \calY} \red{\C}(y) E(s) : {\E}\dot{w}(s) \,{\PINK dx\,dy\,ds\BLACK}.
\end{align*}
Taking the supremum over all partitions of $[0,t]$ yields \eqref{main result hom - step 3 inequality}, which concludes the proof, after replacing $P$ with $P^{1,2}$. 
\end{proof}

The claim of \Cref{main result 2} can be improved in the following way.

\begin{corollary} \label{cor improved 2} 
Any solution of two-scale quasistatic evolution $t \mapsto (u(t), E(t), P(t))$ given by \Cref{two-scale quasistatic evolution} is absolutely continuous from $[0,T]$ into $L^1(\ext{\Omega}) \times L^2(\ext{\Omega} \times \calY;\M^{3 \times 3}_{\sym}) \times \Mb(\ext{\Omega} \times \calY;\M^{3 \times 3}_{\dev})$. Let the conditions of \Cref{main result 2} be valid. In addition to convergences \eqref{main result 0 u^eps(t) - regime zero}, \eqref{main result e^eps(t) - regime zero} and \eqref{main result p^eps(t) - regime zero} we have that for every $t \in [0,T]$
\begin{align} 
	& \label{con1} e^\eps(t) \strongtwoscale E(t) \quad \text{two-scale strongly in $L^2(\ext{\Omega} \times \calY;\M^{2 \times 2}_{\sym})$} \\ 
	& \label{con2} \delta(\eps) \|p_0^\eps(t)\|_{L^2(\Omega;\M^{3\times 3}_{\dev})} \to 0, \quad \delta(\eps) \|\alpha_0^\eps(t)\|_{L^2(\Omega)} \to 0. 
\end{align} 
\end{corollary}

\begin{proof} 
The first part of the claim can be proved in the same way as \cite[Theorem 5.2]{DalMaso.DeSimone.Mora.2006}, which is much simpler in the case of zero loads (cf. \Cref{improved convergence 1}), since the proof relies on global stability, energy equality and coercivity property of the dissipation functional (see also \eqref{coercivity interface}).
To prove the second part, note that as the consequence of the proof of energy balance in \Cref{main result 2}, Step 3 and energy equality of two-scale quasistatic evolution we have that for every $t \in [0,T]$
\[
	\calQ^{\eps}_{0}(e^\eps(t)) \to \calQ^{\rm hom}_{0}(E(t)), \quad \delta(\eps)\,\calQ^{\rm hard,\eps}_{0}(p^\eps(t), \alpha^\eps(t)) \to 0,
\]
Convergences \eqref{con1} and \eqref{con2} immediately follow using strict convexity of $\C^{red}$ and coercivity of $\calQ^{\rm hard,\eps}_{0}$.
\end{proof}

\begin{remark}
It is not clear, from the arguments above, that for given $(u(0), E(0), P(0)) \in \calA^{\rm hom}_{0}(w(0))$ that satisfy \ref{hom-qs S} there exist $(u^\eps_0, e^\eps_0, p^\eps_0, \alpha^\eps_0) \in \calA_{0}^{\rm hard,\eps}(w(0))$ that satisfy \ref{eps-0-qs S} and additionally the convergences \eqref{main result u^eps_0 condition}--\eqref{initial con} cf. \Cref{remark gamma convergence}. Note that in \Cref{remark gamma convergence} we were able to find some subset of the stable initial states that converges to some member of the stable limit initial conditions.

Using a different argument from that in \Cref{remark gamma convergence}, we can conclude that for any $w(0) \in BD_{KL}(\widetilde{\Omega)}$ we can find globally stable $(u(0), E(0), P(0)) \in \calA^{\rm hom}_{0}(w(0))$ and $(u^\eps_0, e^\eps_0, p^\eps_0, \alpha^\eps_0) \in \calA_{0}^{\rm hard,\eps}(w(0))$ such that convergences \eqref{main result u^eps_0 condition}--\eqref{initial con} are satisfied.
Namely, for a given $w(0) \in BD_{KL}(\ext{\Omega})$ we can consider an $\eps$-limiting quasistatic evolution $(u^\eps(t), e^\eps(t), p^\eps(t), \alpha^\eps(t))$ with zero initial condition corresponding to the boundary condition $\tilde{w}(t) := t \cdot w(0)$, $t \in [0,1]$.
Then, as a consequence of \Cref{main result 2} and \Cref{cor improved 2}, we have that $(u^\eps(1), e^\eps(1), p^\eps(1), \alpha^\eps(1)) \in \calA^{\rm hard,\eps}_0 (w(0))$ and $(u(1),E(1),P(1)) \in \calA^{\rm hom}_{0}(w(0))$ satisfy both the global stability condition and the convergences \eqref{main result u^eps_0 condition}--\eqref{initial con}. 
\end{remark}

\section*{Acknowledgement}

This work was supported by the Croatian Science Foundation under the project number HRZZ-IP-2022-10-5181 and by the NextGenerationEU framework through the project "DEEPWAVE" at the University of Zagreb Faculty of Electrical Engineering and Computing. 
The views and opinions expressed are solely those of the author(s) and do not necessarily reflect those of the European Union or the European Commission. Neither the European Union nor the European Commission can be held responsible for them.

The authors wish to thank Gilles A. Francfort for discussions related to this work.

\vspace{+1ex} 
\noindent \textbf{Data Availability:} Due to the nature of the research, there is no supporting data.

\vspace{+1ex} 
\noindent \textbf{Conflict of Interest:} The authors declare to have no conflict of interest.

\printbibliography

\end{document}